\documentclass[final,a4paper]{siamltex}
\usepackage{amsmath,amssymb,amsfonts}
\usepackage{color}
\usepackage{enumerate}
\usepackage{verbatim}
\usepackage{eepic,epic,epsfig,epstopdf}
\usepackage[]{graphics}
\usepackage{graphicx,inputenc}
\usepackage{subfigure}
\usepackage{changepage}
\usepackage{amsmath}
\usepackage{threeparttable}
\usepackage{extarrows}
\usepackage{url,color}
\usepackage{graphicx,subfig}
\graphicspath{{./pics/}}
\usepackage{bm,color,url}
\usepackage{algorithm,algorithmic}
\usepackage{geometry}
\usepackage{setspace}
\geometry{left=1in,right=1in,top=1in,bottom=1in}

\usepackage[notref,notcite]{showkeys}

\allowdisplaybreaks[4]

%%%%%%%%%%%%%%%%%%%%%%%%%%%%%%%%%%%%%%%%%%%%%%%%%%
%\theoremstyle{plain}% default
\newtheorem{Theorem}{Theorem}[section]
\newtheorem{Lemma}[theorem]{Lemma}

\newtheorem{Definition}[theorem]{Definition}

\newtheorem{Remark}{Remark}[section]

%\renewcommand{\algorithmiccomment}[1]{// #1}
%\newenvironment{itemizeX}
%{\begin{list}{\labelitemi}{
%     \leftmargin 2.0em\topsep 0.1em\itemsep -0.1em\labelwidth 50.0em}}
%{\end{list}}

\def\relu{{\mathrm{ReLU}}}

\def\relu{\mathrm{ReLU}}
\def\relu2{\mathrm{ReLU}^2}
\def\calD{\mathcal{D}}

\title{Convergence Analysis of the Deep Galerkin Method for Weak Solutions}
\author{
	Yuling Jiao\thanks{School of Mathematics and Statistics, and
		Hubei Key Laboratory of Computational Science, Wuhan University, Wuhan 430072, P.R. China. (yulingjiaomath@whu.edu.cn)}\quad\and
	Yanming Lai \thanks{School of Mathematics and Statistics,  Wuhan University, Wuhan 430072, P.R. China. (laiyanming@whu.edu.cn)}\quad\and
 Yang Wang \thanks{Department of Mathematics, The Hong Kong University of Science and Technology,
Clear Water Bay, Kowloon, Hong Kong (yangwang@ust.hk)}\quad \and
Haizhao Yang \thanks{Department of Mathematics, University of Maryland, College Park, MD 20742, USA (hzyang@umd.edu)} \quad
\and
Yunfei Yang  \thanks{Department of Mathematics, The Hong Kong University of Science and Technology,
Clear Water Bay, Kowloon, Hong Kong (yyangdc@connect.ust.hk)}
}

\begin{document}
	%\begin{spacing}{1.2}
	\maketitle
	
\begin{abstract}
	This paper analyzes the convergence rate of a deep Galerkin method for the weak solution (DGMW) of second-order elliptic partial differential equations on $\mathbb{R}^d$ with Dirichlet, Neumann, and Robin boundary conditions, respectively.  In DGMW, a deep neural network is applied to parametrize the PDE solution, and a second neural network is adopted to parametrize the test function in the traditional Galerkin formulation. By properly choosing the depth and width of these two networks in terms of the number of training samples $n$, it is shown that the convergence rate of DGMW is $\mathcal{O}(n^{-1/d})$,  which is the first convergence result for weak solutions.  The main idea of the proof is to divide the error of the DGMW  into an approximation error and a statistical error. We derive an upper bound on the approximation error in the $H^{1}$ norm and bound the statistical error via Rademacher complexity.
\end{abstract}

\section{Introduction}
\label{sec:introduction}

Deep learning \cite{deep2016Goodfellow} has achieved many breakthroughs in high-dimensional data analysis, e.g., in computer vision and natural language processing \cite{imagenet2017Krizhevsky,seq2014Sutskever}. Its outstanding performance has also motivated its application to solve high-dimensional PDEs, which is a challenging task for classical numerical methods, e.g., finite element methods \cite{the2012hughes} and finite difference methods \cite{numerical2013thomas}. The application of neural networks to solve PDEs dates back to the 1990s \cite{artificial1998Lagaris} for low-dimensional problems. In recent years, neural network-based PDE solvers were revisited for high-dimensional PDEs with tremendous successes and new development \cite{dgm15,raissi2019physics,dgm2018justin,the2017E,wan2020zang}. The key idea of these methods is to approximate the solutions of PDEs by neural networks and construct loss functions based on equations and their boundary conditions. \cite{raissi2019physics,dgm2018justin} use the squared residuals on the domain as the loss function and treat boundary conditions as penalty terms, which are called physics-informed neural networks (PINNs). Inspired by the Ritz method, \cite{the2017E} proposes the deep Ritz method (DRM) and uses variational forms of PDEs as loss functions. The idea of the Galerkin method has also been used in \cite{wan2020zang}, where, they propose a minimax training procedure via reformulating the problem of finding the weak solution of PDEs into minimizing an operator norm defined through a maximization problem induced by the weak formulation. Here we call the scheme inspired by the Galerkin method DGMW for short (In the original paper \cite{wan2020zang}, this method is called \textit{Weak Adversarial Network } method and called \textit{WAN} for short).

\subsection{Related works and our contributions}
Although there are great empirical achievements of deep learning methods for PDEs in recent
several years, a challenging and interesting  question is to provide a
rigorous error analysis such as the finite element method.
Several recent efforts have been devoted to making processes along this line.
 The error analysis of DRM has been studied in \cite{lu2021priori,xu2020finite,siegel2020approximation,duan2021convergence,jiao2021error,dondl2021uniform,lu2021machine,dondl2021uniform}. \cite{lu2021priori} concerns a priori generalization analysis of the deep Ritz method with two-layer neural networks, under the a priori assumption that the exact solutions of the PDEs lie in spectral Barron space. See also \cite{xu2020finite} for handling general equations with solutions living in spectral Barron space via two-layer $\mathrm{ReLU}^{k}$ networks. \cite{duan2021convergence,jiao2021error,lu2021machine} studied the  error analysis of the DRM in Sobolev spaces with deep networks.
\cite{shin2020convergence,mishra2020estimates,shin2020error,jiao2021convergence,lu2021machine} considered the  convergence and convergence rate of PINNs.

  Since the training loss of DGMW is in a minimax form and  there are two networks to train,
   it is much more challenging to provide a theoretical guarantee for DGMW than that of
DRM and PINNs.   As far as we know, there is no convergence result of  DGMW despite the excellent
  numerical performance shown in \cite{wan2020zang}.
In this paper, we give the first convergence rate analysis of  DGMW  to solve second-order elliptic equations with  Dirichlet, Neumann, and Robin boundary conditions, respectively, with deep neural networks in Sobolev spaces.  Our results show how to set the hyper-parameters of depth and width to achieve the desired convergence rate in terms of the number of training samples.
 The main contributions of this paper are summarized as follows.

\begin{itemize}
	\item We derive novel error decomposition results for DGMW,  which is of independent interest for minimax training with deep networks.
	\item We establish the first convergence rate of the DGMW  with  Drichilet, Neumann, and Robin boundary conditions.  $\forall \epsilon>0$, we prove that if we set the number of samples as $\mathcal{O}(\epsilon^{-d\log d})$ and    the depth, width and the bound of the weights in the two  networks   to be
	      \begin{equation*}
		      \calD\leq\mathcal{O}(\log d), \quad \mathcal{W}\leq \epsilon^{-d}, \quad B_{\theta}\leq\mathcal{O}(\epsilon^{\frac{-9d-8}{2}}),
	      \end{equation*}
	      then the $H^1$ norm error of DGMW in expectation is smaller than $\epsilon$.
\end{itemize}

\subsection{Organization}
The outline of the rest of this paper is as follows.  In Section \ref{sec:deep}, the error decomposition of the DGMW is given, while the details of approximation error and statistical error are presented in Section \ref{sec:app} and \ref{sec:sta}, respectively. We devote Section \ref{sec:rate} to the convergence rate of the DGMW.  Finally, we give a conclusion and extension in Section \ref{sec:conclusion}.
%\section{Neural Network}\label{sec:neural}

We end up this section with some notations used throughout this paper.
 Let $\mathcal{D}\in\mathbb{N}^+$. A function $\mathbf{f}: \mathbb{R}^{d} \rightarrow \mathbb{R}^{n_{\mathcal{D}}}$ implemented by a neural network is defined by
\begin{equation}\label{nn}
	\begin{array}{l}
		\mathbf{f}_{0}(\mathbf{x})=\mathbf{x},\\
		\mathbf{f}_{\ell}(\mathbf{x})=\mathbf{\rho}\left(A_{\ell} \mathbf{f}_{\ell-1}+\mathbf{b}_{\ell}\right)
		\quad \text { for } \ell=1, \ldots, \mathcal{D}-1, \\
		\mathbf{f}:=\mathbf{f}_{\mathcal{D}}(\mathbf{x})=A_{\mathcal{D}}\mathbf{f}_{\mathcal{D}-1}+\mathbf{b}_{\mathcal{D}},
	\end{array}
\end{equation}
where $A_{\ell}=\left(a_{ij}^{(\ell)}\right)\in\mathbb{R}^{n_{\ell}\times n_{\ell-1}}$ and $\mathbf{b}_{\ell}=\left(b_i^{(\ell)}\right)\in\mathbb{R}^{n_{\ell}}$. $\rho$ is called the activation function and acts componentwise. $\mathcal{D}$ is called the depth of the network and $\mathcal{W}:=\max\{n_{\ell}:\ell=1,\cdots,\mathcal{D}\}$ is called the width of the network. $\phi = \{A_{\ell},\mathbf{b}_{\ell}\}_{\ell}$ are called the weight parameters. For convenience, we denote $\mathfrak{n}_i$, $i=1,\cdots,\mathcal{D}$, as the number of nonzero weights on the first $i$ layers in the representation (\ref{nn}). Clearly $\mathfrak{n}_{\mathcal{D}}$ is the total number of nonzero weights. Sometimes we denote a function implemented by a neural network as $\mathbf{f}_{\rho}$ for short. We use the notation $\mathcal{N}_{\rho}\left(\mathcal{D}, \mathfrak{n}_{\mathcal{D}}, B_{\theta}\right)$ to refer to the collection of functions implemented by a $\rho-$neural network with depth $\mathcal{D}$, total number of nonzero weights $\mathfrak{n}_{\mathcal{D}}$ and each weight being bounded by $B_{\theta}$.

\section{Error Decomposition}\label{sec:deep}
We consider the following second-order divergence form in the elliptic equation:
\begin{equation} \label{second order elliptic equation}
		-\sum_{i,j=1}^{d}\partial_j(a_{ij}\partial_iu)+\sum_{i=1}^{d}b_i\partial_iu+cu=f  \quad\text { in } \Omega
\end{equation}
with three kinds of boundary conditions:
\begin{subequations}
\begin{align}
	u&=0\text { on } \partial \Omega\label{dirichlet}\\
	\sum_{i,j=1}^{d}a_{ij}\partial_iun_j&=g\text { on } \partial \Omega\label{neumann}\\
	\alpha u+\beta\sum_{i,j=1}^{d}a_{ij}\partial_iun_j&=g\text { on } \partial \Omega,\quad\alpha,\beta\in\mathbb{R},\beta\neq0\label{robin}
\end{align}
\end{subequations}
which are called the Drichilet, Neumann, and Robin boundary conditions, respectively. Note for Drichilet problem, we only consider the homogeneous boundary condition here since the inhomogeneous case can be turned into a homogeneous case by translation. We also remark that Neumann condition $(\ref{neumann})$ is covered by Robin condition $(\ref{robin})$. Hence in the following, we only consider Dirichlet problem and Robin problem.

We make the following assumption on the known terms in the equation:
\begin{enumerate}[({A}1)]
	\item $\quad\  f\in L^2(\Omega)$, $g\in L^{2}(\partial\Omega)$, $a_{ij}\in C(\bar{\Omega})$, $b_i,c\in L^{\infty}(\Omega)$, $c>0$
	\item $\quad\ $there exists $\lambda,\Lambda>0$ such that	$\lambda|\xi|^2\leq \sum_{i,j=1}^da_{ij}\xi_i\xi_j\leq\Lambda|\xi|^2 ,\quad\forall x\in\Omega,\xi\in\mathbb{R}^d$
	\item $\quad\ 4\lambda c>{d}\max_{1\leq i\leq d}\|b_i\|_{L^{\infty}(\Omega)}^2$
\end{enumerate}
In the following we abbreviate $C\left(\|f\|_{L^2(\Omega)},\|g\|_{L^{2}(\partial\Omega)},\|a_{ij}\|_{C(\bar{\Omega})},\|b_i\|_{L^{\infty}(\Omega)},\|c\|_{L^{\infty}(\Omega)},\lambda\right)$, constants depending on the known terms in equation, as $C(coe)$ for simplicity.

Under the above assumptions, a coercity result is easily acquired.
\begin{Lemma}\label{coercity}
	Let (A1)-(A3) holds. For any $u\in H^1(\Omega)$,
	\begin{equation*}
		\sum_{i,j=1}^{d}(a_{ij}\partial_i u,\partial_ju)+\sum_{i=1}^{d}(b_i\partial_iu,u)+(cu,u)\geq C(d,coe)\|u\|_{H^1(\Omega)}^2
	\end{equation*}
\end{Lemma}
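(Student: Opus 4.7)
The plan is to bound each of the three terms on the left-hand side separately and then combine them using the structural inequality (A3). The principal part is handled by pointwise ellipticity, the zeroth-order term by positivity of $c$, and the sign-indefinite first-order term by Young's inequality with a tunable parameter $\varepsilon$, which is where (A3) will be invoked.

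First, I would apply (A2) pointwise to $\xi = \nabla u(x)$ and integrate over $\Omega$ to obtain
$$\sum_{i,j=1}^d (a_{ij}\partial_i u,\partial_j u) \geq \lambda\|\nabla u\|_{L^2(\Omega)}^2.$$
Next, for the lower-order potential, interpret (A1) together with the quantitative bound (A3) as giving an essential positive lower bound $c_0:=\operatorname*{ess\,inf}_{\Omega}c >0$, so that $(cu,u)\geq c_0\|u\|_{L^2(\Omega)}^2$. Finally, set $M:=\max_{1\leq i\leq d}\|b_i\|_{L^\infty(\Omega)}$ and apply Young's inequality
$$|b_i\partial_i u\cdot u|\leq \tfrac{\varepsilon}{2}|\partial_i u|^2+\tfrac{\|b_i\|_{L^\infty}^2}{2\varepsilon}u^2$$
for a parameter $\varepsilon>0$; integrating and summing over $i$ yields
$$\Big|\sum_{i=1}^d(b_i\partial_i u,u)\Big|\leq \tfrac{\varepsilon}{2}\|\nabla u\|_{L^2(\Omega)}^2+\tfrac{dM^2}{2\varepsilon}\|u\|_{L^2(\Omega)}^2.$$

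Adding the three bounds, the quantity in question is at least
$$\bigl(\lambda-\tfrac{\varepsilon}{2}\bigr)\|\nabla u\|_{L^2(\Omega)}^2+\bigl(c_0-\tfrac{dM^2}{2\varepsilon}\bigr)\|u\|_{L^2(\Omega)}^2.$$
The decisive step is to select $\varepsilon$ in the interval $\bigl(\tfrac{dM^2}{2c_0},\,2\lambda\bigr)$. This interval is nonempty precisely when $4\lambda c_0>dM^2$, which is exactly assumption (A3); a convenient explicit choice is $\varepsilon=\tfrac{1}{2}\bigl(\tfrac{dM^2}{2c_0}+2\lambda\bigr)$. With this $\varepsilon$, both coefficients above are strictly positive constants depending only on $d$ and the coefficients of the PDE, so setting $C(d,\mathrm{coe}):=\min\bigl(\lambda-\varepsilon/2,\;c_0-dM^2/(2\varepsilon)\bigr)$ gives the claimed $H^1$-coercivity.

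The proof has no substantive obstacle; it is a standard Gårding-type estimate. The only subtlety worth flagging is the interpretation of (A1)–(A3): the hypothesis $c>0$ together with the quantitative inequality (A3) must be read so that $c$ admits a positive essential lower bound (otherwise the constant $c_0$ used above is not available); this interpretation is implicit in the paper's shorthand $C(\mathrm{coe})$, which already absorbs $\|c\|_{L^\infty}$ and related quantities.
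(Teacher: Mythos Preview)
Your proof is correct and follows essentially the same approach as the paper's: bound the principal part by ellipticity, the zeroth-order term by the positive lower bound on $c$, and control the cross term via Young's inequality with a parameter chosen in the interval guaranteed nonempty by (A3). The only cosmetic difference is that the paper first applies Cauchy--Schwarz in $i$ to obtain $\sqrt{d}\,M\,\|u\|_{L^2}|u|_{H^1}$ and then uses Young's inequality once with parameter $\delta$, whereas you apply Young termwise; after the substitution $\varepsilon=2\delta$ the resulting intervals and constants coincide.
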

\begin{proof}
	Applying H$\mathrm{\ddot{o}}$lder and Cauchy's inequality and choosing $\delta$ such that
	\begin{equation*}
		\frac{{d}\max_{1\leq i\leq d}\|b_i\|_{L^{\infty}(\Omega)}^2}{4c}<\delta<\lambda	
	\end{equation*}
	we have
	\begin{equation*}
		\begin{aligned}
			&\sum_{i,j=1}^{d}(a_{ij}\partial_i u,\partial_ju)+\sum_{i=1}^{d}(b_i\partial_iu,u)+(cu,u)\\
			&\geq \lambda|u|_{H^1(\Omega)}^2+c\|u\|_{L^2(\Omega)}^2
			-\sqrt{d}\max_{1\leq i\leq d}\|b_i\|_{L^{\infty}(\Omega)}\|u\|_{L^2(\Omega)}|u|_{H^1(\Omega)}\\
			&\geq(\lambda-\delta)|u|_{H^1(\Omega)}^2+\left(c-\frac{{d}\max_{1\leq i\leq d}\|b_i\|_{L^{\infty}(\Omega)}^2}{4\delta}\right)\|u\|_{L^2(\Omega)}^2
			\geq C(coe)\|u\|_{H^1(\Omega)}^2
		\end{aligned}	
	\end{equation*}
\end{proof}

The coercity ensures the existence and uniqueness of the weak solution of Dirichlet problem and Robin problem. Specifically, for problem $(\ref{second order elliptic equation})(\ref{dirichlet})$, the variational problems is: find $u\in H_0^1(\Omega)$ such that
\begin{equation} \label{variational dirichlet}
	\sum_{i,j=1}^{d}(a_{ij}\partial_iu,\partial_jv)+\sum_{i=1}^{d}(b_i\partial_iu,v)+(cu,v)=(f,v)  \quad\forall v\in H_{0}^1(\Omega)
\end{equation}

\begin{Lemma} \label{uD regularity}
	Let (A1)-(A3) holds. Let $u_D$ be the solution of problem $(\ref{variational dirichlet})$. Then $u_D\in H^2(\Omega)$.
\end{Lemma}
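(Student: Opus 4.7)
The plan is to obtain this regularity result as a direct consequence of classical elliptic regularity theory, proceeding in two steps.

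First, I would establish the existence of $u_D$ in $H^1_0(\Omega)$ by the Lax-Milgram theorem. The bilinear form
\[
a(u,v):=\sum_{i,j=1}^{d}(a_{ij}\partial_iu,\partial_jv)+\sum_{i=1}^{d}(b_i\partial_iu,v)+(cu,v)
\]
is continuous on $H^1_0(\Omega)\times H^1_0(\Omega)$ by (A1)--(A2), and coercive on $H^1_0(\Omega)$ by Lemma~\ref{coercity}. Since $v\mapsto(f,v)$ is a bounded linear functional on $H^1_0(\Omega)$ by (A1), Lax-Milgram produces a unique $u_D\in H^1_0(\Omega)$ satisfying (\ref{variational dirichlet}), together with the a priori bound $\|u_D\|_{H^1(\Omega)}\leq C(\mathrm{coe})\|f\|_{L^2(\Omega)}$.

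Second, I would upgrade this $H^1$ solution to $H^2$ via the standard difference-quotient argument, as in Evans (\emph{PDE}, Chapter~6) or Gilbarg-Trudinger (\emph{Elliptic PDE of Second Order}, Chapter~8). For interior regularity, one tests (\ref{variational dirichlet}) against $v=-\partial_k^{-h}(\eta^2\partial_k^h u_D)$, where $\partial_k^h$ is a finite difference quotient in the $x_k$ direction and $\eta$ is a smooth cutoff; using uniform ellipticity (A2) together with Lipschitz control on $a_{ij}$, this yields an $h$-uniform bound on $\nabla\partial_k^h u_D$ in $L^2_{\mathrm{loc}}$, so passing $h\to 0$ gives $u_D\in H^2_{\mathrm{loc}}(\Omega)$. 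For boundary regularity, one flattens $\partial\Omega$ via local $C^2$ charts and repeats the difference-quotient argument in the $d-1$ tangential directions, obtaining $L^2$-control of all tangential second derivatives up to $\partial\Omega$. The remaining purely normal second derivative $\partial_d^2 u_D$ is then recovered algebraically from (\ref{second order elliptic equation}) using the lower bound $a_{dd}\geq\lambda>0$, leading to $\|u_D\|_{H^2(\Omega)}\leq C(\mathrm{coe})\|f\|_{L^2(\Omega)}$.

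The main subtlety — and the step that deserves the most care — is that Lemma~\ref{uD regularity} as stated does not explicitly impose enough regularity on either $\partial\Omega$ or on the coefficients $a_{ij}$ for the above argument to run verbatim: the difference-quotient step needs at least $a_{ij}\in C^{0,1}(\bar\Omega)$ (so that difference quotients of the flux are bounded), and the boundary flattening needs $\partial\Omega\in C^2$. Thus my proposal is to read the lemma under these standard implicit smoothness hypotheses, which are the minimal conditions under which the cited classical $L^2$-regularity theorems apply; with only $a_{ij}\in C(\bar\Omega)$ one can in general conclude merely $u_D\in H^2_{\mathrm{loc}}$, so the full global statement relies on an upgraded version of~(A1).
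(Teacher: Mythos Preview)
Your proposal is correct and follows exactly the approach the paper takes: the paper's entire proof is a citation to Evans' \emph{Partial Differential Equations}, whose Chapter~6 carries out precisely the Lax--Milgram plus difference-quotient argument you outline. Your closing remark about the implicit need for $a_{ij}\in C^{0,1}(\bar\Omega)$ and $\partial\Omega\in C^2$ is accurate and worth flagging, since the stated hypothesis (A1) alone does not suffice for global $H^2$ regularity.
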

\begin{proof}
	See \cite{evans1998partial}.
\end{proof}

For problem $(\ref{second order elliptic equation})(\ref{robin})$, the variational problem is: find $u\in H^1(\Omega)$ such that
	\begin{equation} \label{variational robin}
	\sum_{i,j=1}^{d}(a_{ij}\partial_iu,\partial_jv)+\sum_{i=1}^{d}(b_i\partial_iu,v)+(cu,v)+\frac{\alpha}{\beta}(T_0u,T_0v)|_{\partial\Omega}=(f,v)+\frac{1}{\beta}(g,T_0v)|_{\partial\Omega},\quad\forall v\in H^1(\Omega)
	\end{equation}
	where $T_0$ is a zero-order trace operator.
	
\begin{Lemma} \label{uR regularity}
	Let (A1)-(A3) holds. Let $u_R$ be the solution of problem $(\ref{variational robin})$. Then $u_R\in H^2(\Omega)$ and $\|u_R\|_{H^2(\Omega)}\leq\frac{ C(coe)}{\beta}$ for any $\beta>0$.
\end{Lemma}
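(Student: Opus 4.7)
The plan is to proceed in two stages: first establish existence together with an $H^1$ estimate for $u_R$ via Lax--Milgram, then lift this to an $H^2$ estimate via standard elliptic regularity, tracking the dependence on $\beta$ throughout.

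For the first stage, I would apply Lax--Milgram to the bilinear form on the left-hand side of (\ref{variational robin}). Continuity on $H^1(\Omega)\times H^1(\Omega)$ is immediate from (A1) together with the trace theorem, while Lemma \ref{coercity} already yields coercivity for the bulk terms. The extra boundary term $\frac{\alpha}{\beta}(T_0u,T_0u)$ is nonnegative when $\alpha/\beta\ge 0$; in the remaining case it can be absorbed using the trace inequality $\|T_0v\|_{L^2(\partial\Omega)}^2\le \varepsilon |v|_{H^1(\Omega)}^2+C_\varepsilon\|v\|_{L^2(\Omega)}^2$ at the price of a larger $C(coe)$. Testing (\ref{variational robin}) with $v=u_R$ and applying Cauchy--Schwarz plus the trace theorem then gives
\begin{equation*}
  C(coe)\|u_R\|_{H^1(\Omega)}^2 \le \|f\|_{L^2(\Omega)}\|u_R\|_{L^2(\Omega)} + \tfrac{C}{\beta}\|g\|_{L^2(\partial\Omega)}\|u_R\|_{H^1(\Omega)},
\end{equation*}
so $\|u_R\|_{H^1(\Omega)}\le C(coe)/\beta$.

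For the second stage, I would invoke the standard $H^2$-regularity theorem for second-order elliptic equations with Robin boundary conditions, which proceeds by the same boundary difference-quotient argument used for Dirichlet data in Lemma \ref{uD regularity} (see, e.g., \cite{evans1998partial} or Grisvard). This produces
\begin{equation*}
  \|u_R\|_{H^2(\Omega)} \le C\bigl(\|u_R\|_{H^1(\Omega)} + \|f\|_{L^2(\Omega)} + \beta^{-1}\|g\|_{H^{1/2}(\partial\Omega)}\bigr),
\end{equation*}
and substituting the $H^1$ bound from the first stage yields the claimed $\|u_R\|_{H^2(\Omega)}\le C(coe)/\beta$.

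The main obstacle is pinning down the explicit $\beta$-dependence in the regularity constant, since textbook statements generally suppress how the Robin coefficient enters. One route is to rerun the boundary difference-quotient argument directly, verifying that every appearance of $g$ carries the $1/\beta$ factor coming from the right-hand side of (\ref{variational robin}) and that the $\alpha/\beta$ coefficient on the boundary term can be absorbed into the elliptic estimate for $\beta$ bounded below. An alternative is to write $u_R=w+\tilde g$ where $\tilde g$ is a smooth lifting of $g/\beta$, reducing the problem to one for $w$ with $L^2$-data and a boundary condition of homogeneous shape, for which classical $H^2$-regularity applies cleanly. A secondary caveat is that (A1) only gives $g\in L^2(\partial\Omega)$, whereas the $H^2$-regularity estimate above needs $g\in H^{1/2}(\partial\Omega)$; this tacit strengthening of the hypothesis should be flagged when the lemma is invoked.
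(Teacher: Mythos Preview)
Your two-stage argument (Lax--Milgram for existence and an $H^1$ bound, followed by boundary difference-quotient regularity) is correct and is exactly the standard route; the paper's own proof is simply a citation to \cite{gilbarg2015elliptic}, so your sketch is in fact more detailed than what appears there. Your observation that the $H^2$ estimate really wants $g\in H^{1/2}(\partial\Omega)$ rather than the $g\in L^2(\partial\Omega)$ assumed in (A1) is a genuine gap in the paper's stated hypotheses, not in your argument.
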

\begin{proof}
	See \cite{gilbarg2015elliptic}.
\end{proof}

Intuitively, when $\alpha=1$, $g=0$, and $\beta\to0$, we expect that the solution of the Robin problem converges to the solution of the Dirichlet problem. Hence we only need to consider the Robin problem since the Dirichlet problem can be handled through a limiting process. The next lemma verifies this assertion.
\begin{Lemma} \label{penalty convergence}
	Let (A1)-(A3) holds. Let $\alpha=1,g=0$. Let $u_D$ be the solution of problem $(\ref{variational dirichlet})$ and $u_R$ the solution of problem $(\ref{variational robin})$. There holds
	\begin{equation*}
		\|u_R-u_D\|_{H^1(\Omega)}\leq C(d,\Omega,coe)\beta^{1/2}
	\end{equation*}
\end{Lemma}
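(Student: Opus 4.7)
The plan is to exploit the $H^2$ regularity of $u_D$ (Lemma~\ref{uD regularity}) to derive a single variational identity satisfied on the \emph{whole} space $H^1(\Omega)$ by both solutions, then test against the difference $w:=u_R-u_D$ and use coercity together with Young's inequality to absorb the bad boundary term.

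First, because $u_D\in H^2(\Omega)$ satisfies the equation strongly a.e., Green's identity promotes the Dirichlet variational identity from $H_0^1$ to arbitrary $v\in H^1(\Omega)$ at the cost of a boundary term:
\begin{equation*}
\sum_{i,j}(a_{ij}\partial_i u_D,\partial_j v)+\sum_i(b_i\partial_i u_D,v)+(cu_D,v)=(f,v)+\int_{\partial\Omega} h\,T_0 v\,dS,
\end{equation*}
where $h:=\sum_{i,j}a_{ij}\partial_i u_D\,n_j$ is the co-normal derivative of $u_D$. The Robin identity \eqref{variational robin} (with $\alpha=1$, $g=0$) already holds for every $v\in H^1(\Omega)$. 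Subtracting the two identities yields, for all $v\in H^1(\Omega)$,
\begin{equation*}
\sum_{i,j}(a_{ij}\partial_i w,\partial_j v)+\sum_i(b_i\partial_i w,v)+(cw,v)+\tfrac{1}{\beta}(T_0 u_R,T_0 v)_{\partial\Omega}=-\int_{\partial\Omega} h\,T_0 v\,dS,
\end{equation*}
where we used $T_0 u_D=0$, so $T_0 w=T_0 u_R$.

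Next I would choose $v=w\in H^1(\Omega)$. Lemma~\ref{coercity} bounds the bulk terms from below by $C(coe)\|w\|_{H^1(\Omega)}^2$, while the penalty term contributes a nonnegative $\tfrac{1}{\beta}\|T_0 u_R\|_{L^2(\partial\Omega)}^2$; both work in our favor on the left. The right-hand side becomes $-\int_{\partial\Omega} h\,T_0 u_R\,dS$, which I would estimate by Cauchy--Schwarz and then split with Young's inequality in the $\beta$-weighted form
\begin{equation*}
\Bigl|\int_{\partial\Omega} h\,T_0 u_R\,dS\Bigr|\le \tfrac{1}{2\beta}\|T_0 u_R\|_{L^2(\partial\Omega)}^2+\tfrac{\beta}{2}\|h\|_{L^2(\partial\Omega)}^2,
\end{equation*}
so the $\tfrac{1}{2\beta}$ piece is absorbed into the penalty term on the left, leaving
\begin{equation*}
C(coe)\|w\|_{H^1(\Omega)}^2\le \tfrac{\beta}{2}\|h\|_{L^2(\partial\Omega)}^2.
\end{equation*}

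The last step is to control $\|h\|_{L^2(\partial\Omega)}$ uniformly. Using continuity of $a_{ij}$ on $\bar\Omega$ and the trace inequality $\|T_0\nabla u_D\|_{L^2(\partial\Omega)}\le C(\Omega)\|u_D\|_{H^2(\Omega)}$, I get $\|h\|_{L^2(\partial\Omega)}\le C(d,\Omega,coe)\|u_D\|_{H^2(\Omega)}$, and the latter is finite by Lemma~\ref{uD regularity}. Taking square roots gives the desired $\|u_R-u_D\|_{H^1(\Omega)}\le C(d,\Omega,coe)\beta^{1/2}$.

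The only delicate point is the promotion of the Dirichlet weak form to test functions in $H^1(\Omega)$: it relies on $u_D\in H^2(\Omega)$ so that the co-normal $h$ exists as an $L^2(\partial\Omega)$ function, which is exactly what Lemma~\ref{uD regularity} supplies. Everything else is a clean coercity/Young absorption argument, so no additional obstacle is expected.
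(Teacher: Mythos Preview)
Your proposal is correct and follows essentially the same approach as the paper: promote the Dirichlet identity to all of $H^1(\Omega)$ via the $H^2$ regularity of $u_D$ (picking up the co-normal boundary term), subtract the Robin identity, test against $w=u_R-u_D$, apply the coercity Lemma~\ref{coercity}, and absorb the boundary term with a $\beta$-weighted Young inequality together with the trace bound $\|T_0\nabla u_D\|_{L^2(\partial\Omega)}\le C(\Omega)\|u_D\|_{H^2(\Omega)}$. The paper chooses the Young constants as $\tfrac{1}{4}\beta$ and $\tfrac{1}{\beta}$ (fully canceling the penalty term) rather than your $\tfrac{\beta}{2}$ and $\tfrac{1}{2\beta}$, but this is cosmetically different only.
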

\begin{proof}
By the definition of $u_R$ and $u_D$, we have for any $v\in H^1(\Omega)$,
\begin{equation}\label{penalty1}
	\sum_{i,j=1}^{d}(a_{ij}\partial_i u_R,\partial_jv)+\sum_{i=1}^{d}(b_i\partial_iu_R,v)+(cu_R,v)+\frac{1}{\beta}(T_0u_R,T_0v)|_{\partial\Omega}=(f,v)
\end{equation}

\begin{equation}\label{penalty2}
	\sum_{i,j=1}^{d}(a_{ij}\partial_i u_D,\partial_jv)+\sum_{i=1}^{d}(b_i\partial_iu_D,v)+(cu_D,v)
	=(f,v)+\sum_{i,j=1}^{d}\int_{\partial\Omega}a_{ij}\partial_i u_D T_0vn_jds
\end{equation}
where $n_j$ is the $j$th component of $\mathbf{n}$,
the outward pointing unit normal vector along $\partial\Omega$. Subtracting $(\ref{penalty2})$ from $(\ref{penalty1})$ and choosing $v=u_R-u_D$, we have
\begin{align}
	&\sum_{i,j=1}^{d}(a_{ij}\partial_i (u_R-u_D),\partial_j(u_R-u_D))+\sum_{i=1}^{d}(b_i\partial_i(u_R-u_D),(u_R-u_D))\nonumber\\
	&+(c(u_R-u_D),(u_R-u_D))
	+\frac{1}{\beta}(T_0(u_R-u_D),T_0(u_R-u_D))|_{\partial\Omega}\nonumber\\
	&=\sum_{i,j=1}^{d}\int_{\partial\Omega}a_{ij}\partial_i u_D T_0(u_R-u_D)n_jds\label{penalty3}
\end{align}
where we use the fact that $T_0u_D=0$. For the term in the right hand side of $(\ref{penalty3})$, by H$\mathrm{\ddot{o}}$lder inequality and Cauchy's inequality, we have
\begin{align}
	&\sum_{i,j=1}^{d}\int_{\partial\Omega}a_{ij}\partial_i u_D T_0(u_R-u_D)n_jds\nonumber\\
	&\leq\max_{1\leq i,j\leq d}\|a_{ij}\|_{C(\bar{\Omega})}d^{3/2}|T_0u_D|_{H^1(\partial\Omega)}\|T_0(u_R-u_D)\|_{L^2(\partial\Omega)}\nonumber\\
	&\leq\frac{1}{4}\beta\left(\max_{1\leq i,j\leq d}\|a_{ij}\|_{C(\bar{\Omega})}\right)^2d^{3}|T_0u_D|_{H^1(\partial\Omega)}^2+\frac{1}{\beta}\|T_0(u_R-u_D)\|_{L^2(\partial\Omega)}^2\nonumber\\
	&\leq\frac{1}{4}\beta\left(\max_{1\leq i,j\leq d}\|a_{ij}\|_{C(\bar{\Omega})}\right)^2d^{3}C(\Omega)\|u_D\|_{H^2(\Omega)}^2+\frac{1}{\beta}\|T_0(u_R-u_D)\|_{L^2(\partial\Omega)}^2\label{penalty4}
\end{align}
where in the final step we apply the trace theorem
\begin{equation*}
	\|T_0v\|_{L^2(\partial\Omega)}\leq C(\Omega)\|v\|_{H^1(\Omega)}
\end{equation*}
See more details in \cite{adams2003sobolev}. Now combining Lemma $\ref{coercity}$, $(\ref{penalty3})$ and $(\ref{penalty4})$ yields the result.
\end{proof}

Define
\begin{equation*}
\mathcal{L}(u,v):=	\sum_{i,j=1}^{d}(a_{ij}\partial_iu,\partial_jv)+\sum_{i=1}^{d}(b_i\partial_iu,v)+(cu,v)+\frac{\alpha}{\beta}(T_0u,T_0v)|_{\partial\Omega}-(f,v)-\frac{1}{\beta}(g,T_0v)|_{\partial\Omega}
\end{equation*}
It is clear that if $u$ is the solution of problem $(\ref{variational robin})$, then
it solves the following optimization problem:
\begin{equation} \label{inf sup}
\inf_{u\in H^1(\Omega)}\sup_{\substack{v\in H^1(\Omega)\\ \|v\|_{H^1(\Omega)}\leq1}}\mathcal{L}(u,v)
\end{equation}
%Notice that for any $u\in H^1(\Omega)$, $\mathcal{L}(u,\cdot)$ is continuous over $H^1(\Omega)$. Since $H^1(\Omega)$ is closed, we conclude that the supremum of $\mathcal{L}(u,\cdot)$ over $H^1(\Omega)$ is achievable. Now it is easy to see that the functional $\max_{v\in H^1(\Omega)}\mathcal{L}(\cdot,v)$ is also continuous over $H^1(\Omega)$. Hence the infimum of $\max_{v\in H^1(\Omega)}\mathcal{L}(\cdot,v)$ can also be achieved over $H^1(\Omega)$. Thus, problem $(\ref{inf sup})$ is equialent to the following minimax prblem:
%\begin{equation}
%\min_{u\in H^1(\Omega)}\max_{\substack{v\in H^1(\Omega)\\ \|v\|_{H^1(\Omega)}\leq1}}\mathcal{L}(u,v)
%\end{equation}
Note that $\mathcal{L}(u,v)$ can be equivalently written as
\begin{align*}
	\mathcal{L}(u,v)=&|\Omega|\mathbb{E}_{X\sim U(\Omega)}\left(\sum_{i,j=1}^{d}(a_{ij}\partial_iu\partial_jv)(X)+\sum_{i=1}^{d}(b_i\partial_iuv)(X)+(cuv)(X)-(fv)(X)\right)\\
	&+\frac{|\partial\Omega|}{\beta}\mathbb{E}_{Y\sim U(\partial\Omega)}\left(\frac{\alpha}{2}(T_0uT_0v)(Y)-(gT_0v)(Y)\right)
\end{align*}
where $U(\Omega)$ and $U(\partial\Omega)$ are uniform distribution on $\Omega$ and $\partial\Omega$, respectively. We then introduce a discrete version of $\mathcal{L}$ defined on $C^1(\Omega)\times C^1(\Omega)$:
\begin{align*}
	\widehat{\mathcal{L}}(u,v):=&\frac{|\Omega|}{N}\sum_{k=1}^N\left(\sum_{i,j=1}^{d}(a_{ij}\partial_iu\partial_jv)(X_k)+\sum_{i=1}^{d}(b_i\partial_iuv)(X_k)+(cuv)(X_k)-(fv)(X_k)\right)\\
	&+\frac{|\partial\Omega|}{\beta M}\sum_{k=1}^M\left(\frac{\alpha}{2}(T_0uT_0v)(Y_k)-(gT_0v)(Y_k)\right)
\end{align*}
where $\{X_k\}_{k=1}^{N}$ and $\{Y_k\}_{k=1}^{M}$ are i.i.d. random variables according to $U(\Omega)$ and $U(\partial\Omega)$ respectively. We now consider a minimax problem with respect to $\widehat{\mathcal{L}}$:
\begin{equation} \label{optimization}
\inf_{u\in\mathcal{P}}\sup_{v\in \mathcal{P}}\widehat{\mathcal{L}}(u,v)
\end{equation}
where $\mathcal{P}\subset C^1(\Omega)$ refers to the parameterized function class. Finally, we call a (random) solver $\mathcal{A}$, say SGD, to
minimize $\sup_{v\in \mathcal{P}}\widehat{\mathcal{L}}(\cdot,v)$ and denote the output of $\mathcal{A}$, say $u_{\phi_\mathcal{A}}$, as the final solution.

In order to study the difference between the weak solution of PDE $(\ref{second order elliptic equation})$($u_R$ and $u_D$) and the solution of empirical loss generated by a random solver ($u_{\phi_{\mathcal{A}}}$), we first define for any $u\in H^1(\Omega)$,
\begin{align*}
	&\mathcal{L}_0(u):=\sup_{\substack{v\in H^2(\Omega)\\ \|v\|_{H^2(\Omega)}\leq1}}\mathcal{L}(u,v)\\
	&\mathcal{L}_1(u):=\sup_{v\in \mathcal{P}}\mathcal{L}(u,v)\\
	&\mathcal{L}_2(u):=\sup_{v\in \mathcal{P}}\widehat{\mathcal{L}}(u,v)
\end{align*}
The following result decomposes the total error into three parts, enabling us to apply different methods to deal with different kinds of errors.
\begin{proposition} \label{error decomposition}
	Let (A1)-(A3) holds.
%	Assume that $\mathcal{P}$ is a bounded set in $H^1(\Omega)$ with bound $\mathcal{M}$.
	Assume that $\mathcal{P}\subset C^1(\Omega)\bigcap H^2(\Omega)$ and $\|u\|_{H^1(\Omega)}\leq\mathcal{M}$ for all $u\in\mathcal{P}$.
	Let $u_R$ and $u_D$ be the solution of problem $(\ref{variational robin})$ and $(\ref{variational dirichlet})$, repsectively. Let $u_{\phi_{\mathcal{A}}}$ be the solution of problem $(\ref{optimization})$ generated by a random solver.
	
	(1)There holds
\begin{align*}
	\|u_{\phi\mathcal{A}}-u_R\|_{H^1(\Omega)}\leq C(d,\Omega,coe)\left(\mathcal{E}_{app}
	+\mathcal{E}_{sta}+\mathcal{E}_{opt}\right)
\end{align*}
with
\begin{align}
\mathcal{E}_{app}&:=\frac{\mathcal{M}}{\beta}\sup_{\substack{v_1\in H^2(\Omega)\\ \|v_1\|_{H^2(\Omega)}\leq1}}\inf_{v_2\in \mathcal{P}}\|v_1-v_2\|_{H^1(\Omega)}+\frac{\mathcal{M}}{\beta}\inf_{\bar{u}\in\mathcal{P}}\|\bar{u}-u_R\|_{H^1(\Omega)}\label{app}\\
\mathcal{E}_{sta}&:=2 \sup _{u \in \mathcal{P}}\left|\mathcal{L}_1(u)-{\mathcal{L}}_2(u)\right|\label{sta}\\
\mathcal{E}_{opt}&:={\mathcal{L}_2}\left(u_{\phi_{\mathcal{A}}}\right)-\inf_{u\in\mathcal{P}}{\mathcal{L}_2}\left(u\right)\label{opt}
\end{align}
	
	(2) Set $\alpha=1,g=0$. There holds
\begin{align*}
	\|u_{\phi\mathcal{A}}-u_R\|_{H^1(\Omega)}\leq C(d,\Omega,coe)\left(\mathcal{E}_{app}
	+\mathcal{E}_{sta}+\mathcal{E}_{opt}+\mathcal{E}_{pen}\right)
\end{align*}
where $\mathcal{E}_{app},\mathcal{E}_{sta},\mathcal{E}_{opt}$ are given by $(\ref{app}),(\ref{sta}),(\ref{opt})$ and
\begin{equation*}
\mathcal{E}_{pen}:=\|u_R-u_D\|_{H^1(\Omega)}
\end{equation*}
\end{proposition}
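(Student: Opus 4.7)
The plan is to first establish the auxiliary estimate
\[
\|u-u_R\|_{H^1(\Omega)} \leq C(d,\Omega,coe)\,\mathcal{L}_0(u), \qquad \forall\, u\in H^1(\Omega),
\]
and then to dominate $\mathcal{L}_0(u_{\phi_{\mathcal{A}}})$ by $\mathcal{E}_{app}+\mathcal{E}_{sta}+\mathcal{E}_{opt}$ by inserting the intermediate functionals $\mathcal{L}_1$ and $\mathcal{L}_2$. Part~(2) will then drop out of Part~(1) via the triangle inequality.

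For the auxiliary estimate I would write $\mathcal{L}(u,v)=\mathcal{B}(u,v)-F(v)$, where $\mathcal{B}$ is the coercive bilinear form appearing in \eqref{variational robin} and $F(v)=(f,v)+\frac{1}{\beta}(g,T_0 v)|_{\partial\Omega}$, and solve the adjoint Robin problem $\mathcal{B}(\phi,w)=(u-u_R,\phi)_{H^1(\Omega)}$ for all $\phi\in H^1(\Omega)$. Lemma~\ref{coercity} yields a unique $H^1$-solution, and the $H^2$-regularity argument underlying Lemma~\ref{uR regularity}, applied to the adjoint equation (whose coefficients satisfy the same hypotheses as (A1)--(A3)), furnishes $w\in H^2(\Omega)$ with $\|w\|_{H^2(\Omega)}\leq C(d,\Omega,coe)\|u-u_R\|_{H^1(\Omega)}$. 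Since $u_R$ is the weak solution of \eqref{variational robin}, $\mathcal{L}(u_R,w)=0$, hence
\[
\|u-u_R\|_{H^1(\Omega)}^2 = \mathcal{B}(u-u_R,w) = \mathcal{L}(u,w) \leq \|w\|_{H^2(\Omega)}\,\mathcal{L}_0(u),
\]
and cancelling one factor of $\|u-u_R\|_{H^1(\Omega)}$ produces the asserted bound.

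To bound $\mathcal{L}_0(u_{\phi_{\mathcal{A}}})$ I would apply two successive insertion tricks. First, for any $v_1\in H^2(\Omega)$ with $\|v_1\|_{H^2(\Omega)}\le 1$ and any $v_2\in\mathcal{P}$, linearity of $\mathcal{L}(u,\cdot)$ together with the continuity estimate $|\mathcal{L}(u,v_1-v_2)|\le C(coe)\beta^{-1}\mathcal{M}\|v_1-v_2\|_{H^1(\Omega)}$ give
\[
\mathcal{L}(u_{\phi_{\mathcal{A}}},v_1) \leq \mathcal{L}_1(u_{\phi_{\mathcal{A}}}) + \tfrac{C\mathcal{M}}{\beta}\|v_1-v_2\|_{H^1(\Omega)};
\]
taking $\sup_{v_1}\inf_{v_2}$ yields $\mathcal{L}_0(u_{\phi_{\mathcal{A}}})\le \mathcal{L}_1(u_{\phi_{\mathcal{A}}})$ plus the first half of $\mathcal{E}_{app}$. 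Second, the definitions of $\mathcal{E}_{sta}$ and $\mathcal{E}_{opt}$ give
\[
\mathcal{L}_1(u_{\phi_{\mathcal{A}}}) \leq \mathcal{L}_2(u_{\phi_{\mathcal{A}}})+\tfrac{1}{2}\mathcal{E}_{sta} \leq \inf_{u\in\mathcal{P}}\mathcal{L}_2(u)+\mathcal{E}_{opt}+\tfrac{1}{2}\mathcal{E}_{sta},
\]
and for any $\bar u\in\mathcal{P}$, $\inf_{u\in\mathcal{P}}\mathcal{L}_2(u)\le \mathcal{L}_2(\bar u)\le \mathcal{L}_1(\bar u)+\tfrac{1}{2}\mathcal{E}_{sta}$; since $\mathcal{L}(u_R,v)=0$ for all $v\in H^1(\Omega)$ one has $\mathcal{L}_1(\bar u)=\sup_{v\in\mathcal{P}}\mathcal{B}(\bar u-u_R,v)\le \tfrac{C\mathcal{M}}{\beta}\|\bar u-u_R\|_{H^1(\Omega)}$, and taking $\inf_{\bar u\in\mathcal{P}}$ recovers the second half of $\mathcal{E}_{app}$.

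The main obstacle is the auxiliary estimate: because $\mathcal{L}_0$ takes the supremum only over $H^2$-normalized test functions rather than $H^1$-normalized ones, the standard Lax--Milgram/coercivity argument alone is insufficient, and one genuinely needs the $H^2$-regularity of the adjoint Robin problem in order to dualize $u-u_R\in H^1(\Omega)$ into a test function $w\in H^2(\Omega)$ of comparable size; book-keeping the explicit dependence of that regularity constant on $1/\beta$ is what forces the $\mathcal{M}/\beta$ factor in $\mathcal{E}_{app}$. Once Part~(1) is in hand, Part~(2) follows at once from the triangle inequality $\|u_{\phi_{\mathcal{A}}}-u_D\|_{H^1(\Omega)}\le \|u_{\phi_{\mathcal{A}}}-u_R\|_{H^1(\Omega)} + \|u_R-u_D\|_{H^1(\Omega)}$: the first summand is controlled by Part~(1) (applied with $\alpha=1$, $g=0$), and the second is exactly $\mathcal{E}_{pen}$.
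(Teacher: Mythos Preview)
Your telescoping of $\mathcal{L}_0(u_{\phi_{\mathcal{A}}})$ through $\mathcal{L}_1$ and $\mathcal{L}_2$, the continuity bounds producing the two halves of $\mathcal{E}_{app}$, and the derivation of Part~(2) from Part~(1) by the triangle inequality all match the paper's argument. The substantive difference is in how you obtain the auxiliary lower bound $\|u_{\phi_{\mathcal{A}}}-u_R\|_{H^1}\le C\,\mathcal{L}_0(u_{\phi_{\mathcal{A}}})$: the paper does not invoke any adjoint problem. Since $u_{\phi_{\mathcal{A}}}\in\mathcal{P}\subset H^2(\Omega)$ and $u_R\in H^2(\Omega)$ by Lemma~\ref{uR regularity}, the paper simply inserts the explicit test function $v=(u_{\phi_{\mathcal{A}}}-u_R)/\|u_{\phi_{\mathcal{A}}}-u_R\|_{H^1(\Omega)}$ into the supremum defining $\mathcal{L}_0$ and appeals directly to the coercivity Lemma~\ref{coercity}, obtaining $\mathcal{L}_0(u_{\phi_{\mathcal{A}}})-\mathcal{L}_0(u_R)\ge C(d,coe)\|u_{\phi_{\mathcal{A}}}-u_R\|_{H^1(\Omega)}$ in one line.

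Your duality route has a genuine gap at the regularity step. The claim $\|w\|_{H^2(\Omega)}\le C(d,\Omega,coe)\|u-u_R\|_{H^1(\Omega)}$ does not follow from the $H^2$-regularity theory behind Lemma~\ref{uR regularity}: that theory requires the source of the Robin problem to lie in $L^2(\Omega)$, whereas the right-hand side of your adjoint equation is the functional $\phi\mapsto(u-u_R,\phi)_{H^1(\Omega)}$, which contains the gradient pairing $(\nabla(u-u_R),\nabla\phi)_{L^2}$ and is only of $(H^1)^*$ type. Extracting an $L^2$ representative forces an integration by parts, producing $e-\Delta e$ in the interior and $\partial_n e$ on $\partial\Omega$; the resulting estimate is $\|w\|_{H^2}\le C\|u-u_R\|_{H^2}$, not $C\|u-u_R\|_{H^1}$, so the cancellation in $\|u-u_R\|_{H^1}^2\le\|w\|_{H^2}\,\mathcal{L}_0(u)$ no longer closes. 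The paper's elementary coercivity argument sidesteps this difficulty altogether.
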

\begin{proof}
	We only prove (1) since (2) is a direct result of (1) and the triangle inequality.

Letting $\bar{u}$ be any element in $\mathcal{P}$, we have
	\begin{equation*}
\begin{split}
	&\mathcal{L}_0\left(u_{\phi_{\mathcal{A}}}\right)-\mathcal{L}_0\left(u_R\right) \\
	&=\mathcal{L}_0\left(u_{\phi_{\mathcal{A}}}\right)-\mathcal{L}_1\left(u_{\phi_{\mathcal{A}}}\right)+\mathcal{L}_1\left(u_{\phi_{\mathcal{A}}}\right)-{\mathcal{L}_2}\left(u_{\phi_{\mathcal{A}}}\right) +{\mathcal{L}_2}\left(u_{\phi_{\mathcal{A}}}\right)-\inf_{u\in\mathcal{P}}{\mathcal{L}_2}\left(u\right)\\
	&\quad+\inf_{u\in\mathcal{P}}{\mathcal{L}_2}\left(u\right)-{\mathcal{L}_2}\left(\bar{u}\right)
	+{\mathcal{L}_2}\left(\bar{u}\right)-\mathcal{L}_1\left(\bar{u}\right)+\mathcal{L}_1\left(\bar{u}\right)-\mathcal{L}_1\left(u_R\right) \\
	&\leq [\mathcal{L}_0\left(u_{\phi_{\mathcal{A}}}\right)-\mathcal{L}_1\left(u_{\phi_{\mathcal{A}}}\right)]+\left[\mathcal{L}_1\left(\bar{u}\right)-\mathcal{L}_1\left(u_R\right)\right]+2 \sup _{u \in \mathcal{P}}\left|\mathcal{L}_1(u)-{\mathcal{L}_2}(u)\right| +\left[{\mathcal{L}_2}\left(u_{\phi_{\mathcal{A}}}\right)-\inf_{u\in\mathcal{P}}{\mathcal{L}_2}\left(u\right)\right],
\end{split}
	\end{equation*}
	where we use the fact that $\mathcal{L}_0(u_R)=\mathcal{L}_1(u_R)=0$. Since $\bar{u}$ can be any element in $\mathcal{P}$, we take the infimum of $\bar{u}$ on both side of the above display,
	\begin{align}
		\mathcal{L}_0\left(u_{\phi_{\mathcal{A}}}\right)-\mathcal{L}_0\left(u_R\right)
		&\leq [\mathcal{L}_0\left(u_{\phi_{\mathcal{A}}}\right)-\mathcal{L}_1\left(u_{\phi_{\mathcal{A}}}\right)]+\inf_{\bar{u}\in\mathcal{P}}\left[\mathcal{L}_1\left(\bar{u}\right)-\mathcal{L}_1\left(u_R\right)\right]\nonumber\\
		&+2 \sup _{u \in \mathcal{P}}\left|\mathcal{L}_1(u)-{\mathcal{L}}_2(u)\right| +\left[{\mathcal{L}_2}\left(u_{\phi_{\mathcal{A}}}\right)-\inf_{u\in\mathcal{P}}{\mathcal{L}_2}\left(u\right)\right]\label{errdec1}
	\end{align}
Now for the term on the left hand side of $(\ref{errdec1})$, by Lemma $\ref{coercity}$ we have
\begin{align}
	&\mathcal{L}_0\left(u_{\phi_{\mathcal{A}}}\right)-\mathcal{L}_0\left(u_R\right)
	=\sup_{\substack{v\in H^2(\Omega)\\ \|v\|_{H^2(\Omega)}\leq1}}\left[\mathcal{L}(u_{\phi_\mathcal{A}},v)-\mathcal{L}(u_{R},v)\right]\nonumber\\
	&=\sup_{\substack{v\in H^2(\Omega)\\ \|v\|_{H^2(\Omega)}\leq1}}\left[\sum_{i,j=1}^{d}(a_{ij}\partial_i (u_{\phi_{\mathcal{A}}}-u_R),\partial_jv)+\sum_{i=1}^{d}(b_i\partial_i(u_{\phi_{\mathcal{A}}}-u_R),v)\right.\nonumber\\
	&\qquad\qquad\qquad\ +\left.(c(u_{\phi_{\mathcal{A}}}-u_R),v)+\frac{\alpha}{\beta}(T_0(u_{\phi_{\mathcal{A}}}-u_R),T_0v)|_{\partial\Omega}\right]\nonumber\\
	&\geq\sum_{i,j=1}^{d}\left(a_{ij}\partial_i (u_{\phi_{\mathcal{A}}}-u_R),\frac{\partial_j(u_{\phi_{\mathcal{A}}}-u_R)}{\|u_{\phi_{\mathcal{A}}}-u_R\|_{H^1(\Omega)}}\right)+\sum_{i=1}^{d}\left(b_i\partial_i(u_{\phi_{\mathcal{A}}}-u_R),\frac{u_{\phi_{\mathcal{A}}}-u_R}{\|u_{\phi_{\mathcal{A}}}-u_R\|_{H^1(\Omega)}}\right)\nonumber\\
	&\quad+\left(c(u_{\phi_{\mathcal{A}}}-u_R),\frac{u_{\phi_{\mathcal{A}}}-u_R}{\|u_{\phi_{\mathcal{A}}}-u_R\|_{H^1(\Omega)}}\right)+\frac{\alpha}{\beta}\left(T_0(u_{\phi_{\mathcal{A}}}-u_R),\frac{T_0(u_{\phi_{\mathcal{A}}}-u_R)}{\|u_{\phi_{\mathcal{A}}}-u_R\|_{H^1(\Omega)}}\right)\left|_{\partial\Omega}\right.\nonumber\\
	&\geq C(d,coe)\|u_{\phi_{\mathcal{A}}}-u_R\|_{H^1(\Omega)},\label{errdec2}
\end{align}
where the first step is due to the fact that $\mathcal{L}_0(u_R)=0$. For the first term on the right-hand side of $(\ref{errdec1})$,
\begin{align}
	&\mathcal{L}_0\left(u_{\phi_{\mathcal{A}}}\right)-\mathcal{L}_1\left(u_{\phi_{\mathcal{A}}}\right)
	=\sup_{\substack{v\in H^2(\Omega)\\ \|v\|_{H^2(\Omega)}\leq1}}\mathcal{L}(u_{\phi_{\mathcal{A}}},v)-\sup_{v\in \mathcal{P}}\mathcal{L}(u_{\phi_{\mathcal{A}}},v)\nonumber\\
	&=\sup_{\substack{v_1\in H^2(\Omega)\\ \|v_1\|_{H^2(\Omega)}\leq1}}\inf_{v_2\in \mathcal{P}}\mathcal{L}(u_{\phi_{\mathcal{A}}},v_1-v_2)\nonumber\\
	&\leq\sup_{\substack{v_1\in H^2(\Omega)\\ \|v_1\|_{H^2(\Omega)}\leq1}}\inf_{v_2\in \mathcal{P}}\frac{1}{\beta}C(d,\Omega,coe)\|u_{\phi_{\mathcal{A}}}\|_{H^1(\Omega)}\|v_1-v_2\|_{H^1(\Omega)}+\frac{1}{\beta}C(d,\Omega,coe)\|v_1-v_2\|_{H^1(\Omega)}\nonumber\\
	&\leq\frac{\mathcal{M}}{\beta}C(d,\Omega,coe)\sup_{\substack{v_1\in H^2(\Omega)\\ \|v_1\|_{H^2(\Omega)}\leq1}}\inf_{v_2\in \mathcal{P}}\|v_1-v_2\|_{H^1(\Omega)}\label{errdec3}
\end{align}
For the second term on the right hand side of $(\ref{errdec1})$,
\begin{align}
	\mathcal{L}_1(\bar{u})-\mathcal{L}_1(u_R)&=\sup_{v\in \mathcal{P}}
	\left[\mathcal{L}(\bar{u},v)-\mathcal{L}(u_R,v)\right]\leq\frac{\mathcal{M}}{\beta}C(d,\Omega,coe)\|\bar{u}-u_R\|_{H^1(\Omega)}\label{errdec4}
\end{align}
Combining $(\ref{errdec1})-(\ref{errdec4})$ yields the result.
\end{proof}

\section{Approximation Error}\label{sec:app}
In this section, we study the approximation error $\mathcal{E}_{app}$ defined in $(\ref{app})$. Clearly, we first need a neural network approximation result in Sobolev spaces. In this field, \cite{guhring2021approximation} is a comprehensive study concerning a variety of activation functions, including $\mathrm{ReLU}$, sigmoidal type functions, etc. The key idea in \cite{guhring2021approximation} to build the upper bound in Sobolev spaces is to construct an approximate partition of unity.

Denote $\mathcal{F}_{s,p,d}:=\left\{f\in W^{s,p}\left([0,1]^d\right):\|f\|_{W^{s,p}\left([0,1]^d\right)}\leq1\right\}$.

\begin{Theorem}[Proposition 4.8, \cite{guhring2021approximation}]\label{general app}
	Let $p\geq1$, $s,k,d\in\mathbb{N}^+$, $s\geq k+1$,$\bar{k}\geq k$. Let $\rho$ be $\max\{0,x\}^{\bar{k}}$($\mathrm{ReLU}^{\bar{k}}$), $\frac{1}{1+e^{-x}}$(logistic function) or $\frac{e^x-e^{-x}}{e^x+e^{-x}}$(tanh function). For any $\epsilon>0$ and $f\in\mathcal{F}_{s,p,d}$, there exists a neural network $f_{\rho}$ with depth $C\log (d+s)$ such that
	\begin{equation*}
		\|f-f_{\rho}\|_{W^{k,p}\left([0,1]^d\right)}\leq\epsilon.
	\end{equation*}
(1) If $\rho=\max\{0,x\}^{\bar{k}}$, then the number of non-zero weights of $f_\rho$ is bounded by $C(d,s,p,k)\epsilon^{-d /(s-k)}$. (2) If $\rho=\frac{1}{1+e^{-x}}$ or  $\frac{e^x-e^{-x}}{e^x+e^{-x}}$, then the number of non-zero weights of $f_\rho$ is bounded by $C(d,s,p,k)\epsilon^{-d /(s-k-\mu k)}$. Moreover, in case(2) the value of weights is bounded in absolute value by
	\begin{equation*}
		C(d,s,p,k)\epsilon^{-2-\frac{2(d / p+d+k+\mu k)+d / p+d}{s-k-\mu k}}
	\end{equation*}
where $\mu$ is an arbitrarily small positive number.
\end{Theorem}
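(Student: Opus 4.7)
The plan is to follow the classical partition-of-unity plus local polynomial approach used in Yarotsky-style constructions, adapted to control Sobolev rather than sup-norm error. First I would cover $[0,1]^d$ by a regular grid of $N^d$ overlapping patches of side roughly $1/N$, where $N$ is chosen in terms of $\epsilon$ to make the final error match. On this grid I would construct an approximate partition of unity $\{\phi_m\}$ whose elements sum to $1$ on $[0,1]^d$ and each has support contained in one patch; each $\phi_m$ is to be built from the prescribed activation $\rho$ and controlled in $W^{k,\infty}$ norm. On the $m$-th patch I would then form the Taylor polynomial $p_m$ of $f$ of order $s-1$ at the patch center, which delivers a local $W^{k,p}$ approximation error of order $N^{-(s-k)}\|f\|_{W^{s,p}}$ by standard averaged-Taylor / Bramble--Hilbert estimates.

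Next I would realize the localized sum $\sum_m \phi_m p_m$ as a single neural network of depth $O(\log(d+s))$. The key subroutine is a network-level approximate multiplication $\widetilde{\times}(x,y)\approx xy$ with controlled $W^{k,\infty}$ error, together with network realizations of the monomials appearing in $p_m$. For $\rho=\max\{0,x\}^{\bar k}$ with $\bar k\geq k$, monomials of degree at most $\bar k$ are produced \emph{exactly} from a few shifted activations (e.g.\ $x^{\bar k}=\rho(x)+(-1)^{\bar k}\rho(-x)$), and higher-degree ones via iterated exact multiplication; this yields the sparsity bound $O(\epsilon^{-d/(s-k)})$ once $N\asymp\epsilon^{-1/(s-k)}$ is inserted. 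For the logistic or tanh activation, the same architecture is used but monomials must now be \emph{approximated} through scaled finite-difference expressions of the form $h^{-m}\Delta_h^m\rho$, which converge to the $m$-th derivative of $\rho$ at a rate controlled by an arbitrarily small $\mu>0$; this loss is exactly where the effective smoothness exponent drops from $s-k$ to $s-k-\mu k$.

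The last step is to assemble the per-patch pieces and estimate the global $W^{k,p}$ error via the triangle inequality and the Leibniz rule, so that $\bigl\|f-\sum_m\widetilde{\phi_m p_m}\bigr\|_{W^{k,p}}$ is bounded by the sum of the local Taylor error, the partition-of-unity defect, and the multiplication error, with the allocation between these three pieces tuned so that the total is at most $\epsilon$. The hard part is not the Taylor estimate itself but controlling the $W^{k,p}$ norm of products and sums when the activation is smooth but non-algebraic: every derivative of $\widetilde{\times}$ or of a finite-difference monomial surrogate inflates constants scaling like inverse powers of $h$, and pushing these through Leibniz for \emph{all} derivatives up to order $k$ is what ultimately forces the weight bound $C(d,s,p,k)\,\epsilon^{-2-(2(d/p+d+k+\mu k)+d/p+d)/(s-k-\mu k)}$ in case~(2). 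For the $\mathrm{ReLU}^{\bar k}$ case this difficulty disappears because multiplication and monomials are realized exactly at the formula level, so only the number of parameters, not their magnitude, enters the bound.
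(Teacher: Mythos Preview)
Your proposal is correct and follows essentially the same approach as the paper, which in fact does not give a self-contained proof but only a remark sketching the argument from \cite{guhring2021approximation}: partition $[0,1]^d$ into patches, approximate $f$ by $\sum_m \phi_m p_m$ with $\{\phi_m\}$ an approximate partition of unity and $p_m$ the averaged Taylor polynomials, then realize this sum as a network using approximate multiplication gates, with the depth bound $C\log(d+s)$ coming from the $d+s-1$ products needed for $\phi_m x^\alpha$. Your additional explanation of where the $\mu$-loss and the weight-magnitude bound arise in the smooth-activation case is accurate and goes beyond what the paper's remark states explicitly.
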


\begin{Remark}
	\textnormal{
		The bounds in the theorem can be found in the proof of \cite[Proposition 4.8]{guhring2021approximation}, except that they did not explicitly give the bound on the depth. In their proof, they partition $[0,1]^d$ into small patches, approximate $f$ by a sum of localized polynomial $\sum_m \phi_m p_m$, and approximately implement $\sum_m \phi_m p_m$ by a neural network, where the bump functions $\{\phi_m\}$ form an approximately partition of unity and $p_m = \sum_{|\alpha|<s}c_{f,m,\alpha}x^\alpha$ are the averaged Taylor polynomials. As shown in \cite{guhring2021approximation}, $\phi_m$ can be approximated by the products of the $d$-dimensional output of a neural network with constant layers. And the identity map $I(x)=x$ and the product function $\times(a,b) = ab$ can also be approximated by neural networks with constant layers. In order to approximate $\phi_m x^\alpha$, we need to implement $d+s-1$ products. Hence, the required depth can be bounded by $C\log(d+s)$.
	}
\end{Remark}

Since the region $[0,1]^d$ is larger than the region $\Omega$ we consider(recall we assume without loss of generality that $\Omega\subset[0,1]^d$ at the beginning), we need the following extension result.
\begin{Lemma} \label{extension}
	Let $k\in\mathbb{N}^+$, $1\leq p<\infty$. There exists a linear operator $E$ from $W^{k,p}(\Omega)$ to $W_0^{k,p}\left([0,1]^d\right)$ and $Eu=u$ in $\Omega$.
\end{Lemma}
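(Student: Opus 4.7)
The plan is to obtain $E$ as a two-step construction: first extend from $\Omega$ to all of $\mathbb{R}^d$ using a classical Sobolev extension operator, and then localize the result to $[0,1]^d$ by multiplication with a smooth cutoff. Since the paper has already taken $\Omega\subset[0,1]^d$ without loss of generality, I would first tighten this positioning: by a harmless translation and rescaling (absorbed into the constants of all subsequent estimates) I can assume $\overline{\Omega}$ lies strictly inside the open cube $(0,1)^d$, i.e.\ $\mathrm{dist}(\overline{\Omega},\partial([0,1]^d))>0$. This separation is what will let the cutoff step produce a function with zero trace on $\partial([0,1]^d)$.

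Next, invoke a standard Sobolev extension theorem (e.g.\ the Calder\'on--Stein extension, or the reflection-based construction in Adams or Evans) applicable to domains $\Omega$ with Lipschitz boundary, which is implicit in the regularity assumptions used throughout the paper. This yields a bounded linear operator $\widetilde{E}:W^{k,p}(\Omega)\to W^{k,p}(\mathbb{R}^d)$ with $(\widetilde{E}u)|_\Omega=u$. Then choose a fixed cutoff $\eta\in C_c^\infty(\mathbb{R}^d)$ such that $\eta\equiv 1$ on an open neighborhood of $\overline{\Omega}$ and $\mathrm{supp}\,\eta\subset(0,1)^d$; such an $\eta$ exists because of the strict separation arranged in the previous step. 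Define
\begin{equation*}
Eu := \eta\cdot\widetilde{E}u.
\end{equation*}

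It remains to check the three claimed properties. Linearity is immediate since $\widetilde{E}$ is linear and multiplication by the fixed function $\eta$ is linear. The identity $Eu=u$ on $\Omega$ follows because $\eta\equiv 1$ on $\Omega$ and $\widetilde{E}u=u$ there. For membership in $W_0^{k,p}([0,1]^d)$, the Leibniz rule together with the boundedness of $\eta$ and all its derivatives shows $\eta\widetilde{E}u\in W^{k,p}(\mathbb{R}^d)$; moreover $\mathrm{supp}(Eu)\subset\mathrm{supp}\,\eta\subset(0,1)^d$ is a compact subset of the open cube, so standard approximation by test functions (convolving with a mollifier whose support is smaller than the distance from $\mathrm{supp}\,Eu$ to $\partial([0,1]^d)$) places $Eu$ in $W_0^{k,p}([0,1]^d)$.

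The only genuinely nontrivial ingredient is the first step, the classical extension operator $\widetilde{E}$, whose construction requires regularity of $\partial\Omega$ (Lipschitz suffices for the operators in Stein or Adams). Everything else is bookkeeping with a cutoff, so I would simply cite \cite{adams2003sobolev} or \cite{evans1998partial} for $\widetilde{E}$ and spell out the two-line cutoff argument above. No fine quantitative control on $\|E\|$ is needed for the statement, which is why the proof can afford to be this short.
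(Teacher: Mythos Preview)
Your construction is correct and is exactly the standard two-step argument (Calder\'on--Stein extension followed by a smooth cutoff) that underlies the cited result. The paper itself gives no argument at all and simply refers to Theorem~7.25 in \cite{gilbarg2015elliptic}, so your proposal is strictly more detailed than what the paper provides; there is nothing to compare beyond that.
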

\begin{proof}
	See Theorem 7.25 in \cite{gilbarg2015elliptic}.
\end{proof}

From Lemmas $\ref{uR regularity}$ and $(\ref{app})$, we know that we need to approximate functions in $H^1(\Omega)$ and our target functions lie in $H^2(\Omega)$. We immediately obtain the result we desire from Theorem $\ref{general app}$ and Lemma $\ref{extension}$.

\begin{Theorem} \label{app error}
Let $\rho$ be $\frac{1}{1+e^{-x}}$(logistic function) or $\frac{e^x-e^{-x}}{e^x+e^{-x}}$(tanh function). For any sufficiently small $\epsilon>0$, set the parameterized function class
\begin{align*}
\mathcal{P}:=\mathcal{N}_{\rho}\left(C\log(d+1),C(d,coe)(\beta^2\epsilon)^{\frac{-d}{1-\mu}},C(d,coe) (\beta^2\epsilon)^{\frac{-9d-8}{2-2\mu}}\right)\bigcap B_{H^1(\Omega)}(0,2)
\end{align*}
where $B_{H^1(\Omega)}(0,2):=\{f\in H^1(\Omega):\|f\|_{H^1(\Omega)}\leq2\}$, then $\mathcal{E}_{app}\leq\epsilon$ with $\mathcal{E}_{app}$ defined by $(\ref{app})$.
\end{Theorem}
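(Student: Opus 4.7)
The plan is to bound each of the two summands in the definition of $\mathcal{E}_{app}$ separately by $\epsilon/2$, invoking the Sobolev-norm approximation result of Theorem \ref{general app} with the parameters $s=2$, $k=1$, $p=2$ (so that $H^2$ targets are measured in the $H^1$ norm). The first summand is the worst-case approximation over the $H^2(\Omega)$ unit ball; the second is approximation of the single function $u_R$. Both are handled by first extending the target from $\Omega$ to the cube via Lemma \ref{extension} (which preserves the Sobolev norm up to a constant and places the extension in $W_0^{2,2}([0,1]^d)$), then applying Theorem \ref{general app} to the extension, and finally restricting back to $\Omega$.

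For the supremum term, every candidate $v_1$ satisfies $\|v_1\|_{H^2(\Omega)} \leq 1$, so after extension and rescaling by a fixed constant the normalized target lies in $\mathcal{F}_{2,2,d}$, and Theorem \ref{general app} at accuracy $\tilde\epsilon$ produces a network $v_2$ with $\|v_1-v_2\|_{H^1(\Omega)} \leq C\tilde\epsilon$. For the $u_R$ term, Lemma \ref{uR regularity} yields $\|u_R\|_{H^2(\Omega)} \leq C(coe)/\beta$; after normalizing $u_R$ by this bound, the same theorem produces a network $\bar u$ with $\|\bar u-u_R\|_{H^1(\Omega)} \leq (C(coe)/\beta)\tilde\epsilon$. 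Both summands of $\mathcal{E}_{app}$ carry the pre-factor $\mathcal{M}/\beta = 2/\beta$, so the tightest requirement is $(2/\beta)\cdot(C(coe)/\beta)\tilde\epsilon \leq \epsilon/2$, which forces the choice $\tilde\epsilon = C(d,coe)\beta^2\epsilon$.

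Substituting $\tilde\epsilon = C(d,coe)\beta^2\epsilon$ into case (2) of Theorem \ref{general app} produces a network of depth $C\log(d+2)$ with at most $C(d,coe)(\beta^2\epsilon)^{-d/(1-\mu)}$ non-zero weights, each bounded in absolute value by $C(d,coe)(\beta^2\epsilon)^{-2 - [2(d/2+d+1+\mu)+d/2+d]/(1-\mu)}$. Expanding the numerator gives $9d/2 + 2 + 2\mu$, and after combining with the prefactor $-2$, the exponent simplifies to $-(9d+8)/(2-2\mu)$, exactly the weight bound in the statement.

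The only remaining detail is to check that the constructed approximants lie in $B_{H^1(\Omega)}(0,2)$. For the supremum term this is immediate since $\|v_1\|_{H^1}\leq\|v_1\|_{H^2}\leq 1$, hence $\|v_2\|_{H^1} \leq 1 + C\tilde\epsilon \leq 2$ once $\epsilon$ is small. For $u_R$ one first obtains a uniform $H^1$ bound by testing (\ref{variational robin}) with $v=u_R$ and invoking Lemma \ref{coercity} (absorbing the boundary contribution into the coercive side), so that again $\|\bar u\|_{H^1}\leq 2$ for small $\epsilon$. The principal obstacle is thus not analytic but accounting: correctly propagating the two independent factors of $1/\beta$ (one from the error decomposition, one from the $H^2$-estimate on $u_R$) through the approximation theorem, which is exactly what rescales the effective target accuracy from $\epsilon$ to $\beta^2\epsilon$ and drives the final rates.
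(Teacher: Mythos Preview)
Your proposal is correct and follows essentially the same route as the paper: invoke Theorem~\ref{general app} with $s=2$, $k=1$, $p=2$ after extending via Lemma~\ref{extension}, use Lemma~\ref{uR regularity} to bound $\|u_R\|_{H^2}$, choose the inner accuracy $\delta=C(coe)\beta^2\epsilon$, and check that the resulting networks land in $B_{H^1(\Omega)}(0,2)$. The only cosmetic difference is that the paper normalizes $u_R$ by $\|u_R\|_{H^1(\Omega)}$ rather than by the $H^2$ bound $C(coe)/\beta$ before appealing to the approximation theorem, arriving at the same estimate and the same choice of $\delta$.
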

\begin{proof}
Set $k=1$, $s=2$, $p=2$ in Theorem $\ref{general app}$ and use the fact $\|f-f_\rho\|_{H^2(\Omega)}\leq \|Ef-f_\rho\|_{H^2\left([0,1]^d\right)}$ with $E$ being the  extension operator in Lemma $\ref{extension}$, we conclude that for any $0<\delta\leq1$ and $f\in H^2(\Omega)$ with $\|f\|_{H^2(\Omega)}\leq1$, there exists a neural network $f_{\rho}$ with depth $C\log(d+1)$ and the number of weights $C(d)\delta^{-d/(1-\mu)}$ such that
	\begin{equation}\label{app error1}
		\|f-f_{\rho}\|_{H^1\left(\Omega\right)}\leq\delta
	\end{equation}
and the value of weights are bounded by $C(d) \delta^{-(9d+8)/(2-2\mu)}$, where $\mu$ is an arbitrarily small positive number. Denote
\begin{align*}
\mathcal{P}_\delta^0:=\{f_{\rho}:f\in H^2(\Omega),\quad\|f\|_{H^2(\Omega)}\leq1\}
\end{align*}
Clearly
\begin{align*}
\mathcal{P}_\delta^0\subset\mathcal{P}_\delta^1:=\mathcal{N}_{\rho}\left(C\log(d+1),C(d)\delta^{-d/(1-\mu)},C(d) \delta^{-(9d+8)/(2-2\mu)}\right)
\end{align*}
In addition, for any $f_\rho\in\mathcal{P}_\delta^0$,
\begin{align*}
\|f_\rho\|_{H^1(\Omega)}\leq\|f_\rho-f\|_{H^1(\Omega)}+\|f\|_{H^1(\Omega)}\leq\delta+1\leq2
\end{align*}
Therefore
\begin{align*}
\mathcal{P}_\delta^0\subset\mathcal{P}_\delta^1\bigcap B_{H^1(\Omega)}(0,2)
\end{align*}
with $B_{H^1(\Omega)}(0,2):=\{f\in H^1(\Omega):\|f\|_{H^1(\Omega)}\leq2\}$.

Now we set the parameterized function class
\begin{align*}
\mathcal{P}=\mathcal{P}_\delta^B:=\mathcal{P}_\delta^1\bigcap B_{H^1(\Omega)}(0,2)
\end{align*}
and estimate the approximation error $\mathcal{E}_{app}$ defined by $(\ref{app})$. We first normalize the second term in $(\ref{app})$.
\begin{align*}
\inf_{\bar{u}\in\mathcal{P}_\delta^B}\|\bar{u}-u_R\|_{H^1(\Omega)}
&=\|u_R\|_{H^1(\Omega)}\inf_{\bar{u}\in\mathcal{P}_\delta^B}\left\|\frac{\bar{u}}{\|u_R\|_{H^1(\Omega)}}-\frac{u_R}{\|u_R\|_{H^1(\Omega)}}\right\|_{H^1(\Omega)}\\
&=\|u_R\|_{H^1(\Omega)}\inf_{\bar{u}\in\mathcal{P}_\delta^B}\left\|\bar{u}-\frac{u_R}{\|u_R\|_{H^1(\Omega)}}\right\|_{H^1(\Omega)}\\
&\leq\frac{ C(coe)}{\beta}\inf_{\bar{u}\in\mathcal{P}_\delta^B}\left\|\bar{u}-\frac{u_R}{\|u_R\|_{H^1(\Omega)}}\right\|_{H^1(\Omega)}
\end{align*}
where in the third step we apply Lemma $\ref{uR regularity}$. Hence
\begin{align} \label{app error2}
\mathcal{E}_{app}\leq\frac{2}{\beta}\sup_{\substack{v_1\in H^2(\Omega)\\ \|v_1\|_{H^2(\Omega)}\leq1}}\inf_{v_2\in \mathcal{P}_\delta^B}\|v_1-v_2\|_{H^1(\Omega)}+\frac{2C(coe)}{\beta^2}\inf_{\bar{u}\in\mathcal{P}_\delta^B}\left\|\bar{u}-\frac{u_R}{\|u_R\|_{H^1(\Omega)}}\right\|_{H^1(\Omega)}
\end{align}
Setting $\delta=C(coe)\beta^2\epsilon$ and combining $(\ref{app error1})$ and $(\ref{app error2})$ yields the result.
\end{proof}

\section{Statistical Error} \label{sec:sta}
In this section, we study the statistical error $\mathcal{E}_{sta}$ defined by $(\ref{sta})$.
\begin{Lemma}\label{triangle inequality}
For the statistical error $\mathcal{E}_{sta}$, there holds
\begin{align*}
\mathcal{E}_{sta}\leq\sum_{k=1}^6I_k
\end{align*}
with
\begin{align*}
I_1&:=2|\Omega|\sup _{u,v \in \mathcal{P}}\left|\mathbb{E}_{X\sim U(\Omega)}\sum_{i,j=1}^{d}(a_{ij}\partial_iu\partial_jv)(X)-\frac{1}{N}\sum_{k=1}^N\sum_{i,j=1}^{d}(a_{ij}\partial_iu\partial_jv)(X_k)\right|\\
I_2&:=2|\Omega|\sup _{u,v \in \mathcal{P}}\left|\mathbb{E}_{X\sim U(\Omega)}\sum_{i=1}^{d}(b_i\partial_iuv)(X)-\frac{1}{N}\sum_{k=1}^N\sum_{i=1}^{d}(b_i\partial_iuv)(X_k)\right|\\
I_3&:=2|\Omega|\sup _{u,v \in \mathcal{P}}\left|\mathbb{E}_{X\sim U(\Omega)}(cuv)(X)-\frac{1}{N}\sum_{k=1}^N(cuv)(X_k)\right|\\
I_4&:=2|\Omega|\sup _{u,v \in \mathcal{P}}\left|\mathbb{E}_{X\sim U(\Omega)}(fv)(X)-\frac{1}{N}\sum_{k=1}^N(fv)(X_k)\right|\\
I_5&:=2\frac{|\partial\Omega|}{\beta}\sup _{u,v \in \mathcal{P}}\left|\mathbb{E}_{Y\sim U(\partial\Omega)}\frac{\alpha}{2}(T_0uT_0v)(Y)-\frac{1}{M}\sum_{k=1}^M\frac{\alpha}{2}(T_0uT_0v)(Y_k)\right|\\
I_6&:=2\frac{|\partial\Omega|}{\beta}\sup _{u,v \in \mathcal{P}}\left|\mathbb{E}_{Y\sim U(\partial\Omega)}(gT_0v)(Y)-\frac{1}{M}\sum_{k=1}^M(gT_0v)(Y_k)\right|\\
	\end{align*}
\end{Lemma}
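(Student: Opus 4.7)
The plan is to unfold the definition of $\mathcal{E}_{sta}$ and peel off the suprema and integrals one layer at a time, reducing everything to six term-by-term fluctuation bounds.

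First I would use the elementary inequality $|\sup_v f(v) - \sup_v g(v)| \le \sup_v |f(v) - g(v)|$, applied with $f(v) = \mathcal{L}(u,v)$ and $g(v) = \widehat{\mathcal{L}}(u,v)$ for each fixed $u$. Combining this with the definitions
\begin{equation*}
\mathcal{L}_1(u) = \sup_{v \in \mathcal{P}} \mathcal{L}(u,v), \qquad \mathcal{L}_2(u) = \sup_{v \in \mathcal{P}} \widehat{\mathcal{L}}(u,v),
\end{equation*}
gives
\begin{equation*}
\mathcal{E}_{sta} = 2 \sup_{u \in \mathcal{P}} |\mathcal{L}_1(u) - \mathcal{L}_2(u)| \le 2 \sup_{u,v \in \mathcal{P}} |\mathcal{L}(u,v) - \widehat{\mathcal{L}}(u,v)|.
\end{equation*}

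Next I would substitute the explicit expressions for $\mathcal{L}(u,v)$ and $\widehat{\mathcal{L}}(u,v)$ given just before the lemma and group the six matching pieces. Concretely, $\mathcal{L}(u,v) - \widehat{\mathcal{L}}(u,v)$ is the sum of four differences between $|\Omega|\,\mathbb{E}_{X \sim U(\Omega)}[\,\cdot\,]$ and $\frac{|\Omega|}{N}\sum_{k=1}^N (\,\cdot\,)(X_k)$ applied respectively to $\sum_{i,j} a_{ij}\partial_i u \partial_j v$, $\sum_i b_i \partial_i u\, v$, $c u v$, and $-f v$, plus two differences between $\frac{|\partial\Omega|}{\beta}\,\mathbb{E}_{Y \sim U(\partial\Omega)}[\,\cdot\,]$ and its Monte-Carlo counterpart applied to $\tfrac{\alpha}{2}(T_0 u)(T_0 v)$ and $-(g\,T_0 v)$.

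I would then apply the triangle inequality inside the absolute value and push the supremum through the finite sum, using $\sup_{u,v}|A+B| \le \sup_{u,v}|A| + \sup_{u,v}|B|$. The sign in front of the source terms $fv$ and $g T_0 v$ disappears under the absolute value, so the six resulting quantities are exactly $I_1, \ldots, I_6$ as defined in the statement.

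There is no real obstacle here beyond bookkeeping: the only non-trivial step is the initial $\sup$–$\sup$ exchange, which is standard, and the rest is just matching the six pieces of $\mathcal{L}$ and $\widehat{\mathcal{L}}$ and applying linearity of expectation together with the triangle inequality. The substance of the statistical analysis (Rademacher-type bounds on each $I_k$) is deferred to subsequent results; this lemma is merely the decomposition step that exposes the six one-sample-average concentration problems to be handled separately.
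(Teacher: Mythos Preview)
Your proposal is correct and follows essentially the same route as the paper: the paper also reduces $|\mathcal{L}_1(u)-\mathcal{L}_2(u)|$ to $\sup_{v\in\mathcal{P}}|\mathcal{L}(u,v)-\widehat{\mathcal{L}}(u,v)|$ via the elementary $|\sup f-\sup g|\le\sup|f-g|$ inequality (written out there as two one-sided bounds), and then splits into the six pieces by the triangle inequality. There is no substantive difference.
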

\begin{proof}
We have
\begin{align*}
\mathcal{E}_{sta}&=2 \sup _{u \in \mathcal{P}}\left|\mathcal{L}_1(u)-{\mathcal{L}}_2(u)\right|=
2\sup _{u \in \mathcal{P}}\left|\sup_{v\in \mathcal{P}}\mathcal{L}(u,v)-\sup_{v\in \mathcal{P}}\widehat{\mathcal{L}}(u,v)\right|\\
&\leq2 \sup _{u \in \mathcal{P}}\sup_{v\in \mathcal{P}}\left|\mathcal{L}(u,v)-\widehat{\mathcal{L}}(u,v)\right|\leq\sum_{k=1}^6I_k
\end{align*}
where the third step is due to the fact that
\begin{align*}
\sup_{v\in \mathcal{P}}\mathcal{L}(u,v)-\sup_{v\in \mathcal{P}}\widehat{\mathcal{L}}(u,v)&\leq
\sup_{v\in \mathcal{P}}\left[\mathcal{L}(u,v)-\widehat{\mathcal{L}}(u,v)\right]\leq\sup_{v\in \mathcal{P}}\left|\mathcal{L}(u,v)-\widehat{\mathcal{L}}(u,v)\right|\\
\sup_{v\in \mathcal{P}}\widehat{\mathcal{L}}(u,v)-\sup_{v\in \mathcal{P}}\mathcal{L}(u,v)&\leq
\sup_{v\in \mathcal{P}}\left[\widehat{\mathcal{L}}(u,v)-\mathcal{L}(u,v)\right]\leq\sup_{v\in \mathcal{P}}\left|\mathcal{L}(u,v)-\widehat{\mathcal{L}}(u,v)\right|
\end{align*}
\end{proof}

% For $h\in\mathcal{F}$, there exists $h_c\in\mathcal{F}_c$ such that 	
% \begin{align*}
% &\left|\frac{2}{n}\sum_{k=1}^{n}h(X_k)-\mathbb{E}h(X)\right|\\
% &\leq\left|\frac{2}{n}\sum_{k=1}^{n}h_c(X_k)-\mathbb{E}h_c(X)\right|+\left|\frac{1}{n}\sum_{k=1}^{n}(h-h_c)(X_k)-\mathbb{E}(h-h_c)(X)\right|\\
% &\leq\left|\frac{2}{n}\sum_{k=1}^{n}h_c(X_k)-\mathbb{E}h_c(X)\right|+2\delta
% \end{align*}
% Hence
% \begin{align*}
% &\sup_{h\in\mathcal{F}}\left|\frac{2}{n}\sum_{k=1}^{n}h(X_k)-\mathbb{E}h(X)\right|\leq\sup_{1\leq j\leq\mathcal{N}_{\mathcal{F}}}\left|\frac{2}{n}\sum_{k=1}^{n}h_j(X_k)-\mathbb{E}h_j(X)\right|+2\epsilon
% \end{align*}

% Assume that $\mathcal{F}$ is bounded by $B_{\mathcal{F}}$ in $C^{\infty}$, by Hoeffding's inequality we have
% \begin{align*}
% &P\left(\sup_{1\leq j\leq\mathcal{N}_{\mathcal{F}}}\left|\frac{2}{n}\sum_{k=1}^{n}h_j(X_k)-\mathbb{E}h_j(X)\right|>t\right)\\
% &=P\left(\bigcup_{1\leq j\leq\mathcal{N}_{\mathcal{F}}}\left\{\left|\frac{2}{n}\sum_{k=1}^{n}h_j(X_k)-\mathbb{E}h_j(X)\right|>t\right\}\right)\\
% &\leq \sum_{1\leq j\leq\mathcal{N}_{\mathcal{F}}}P\left(\left|\frac{2}{n}\sum_{k=1}^{n}h_j(X_k)-\mathbb{E}h_j(X)\right|>t\right) \leq2\mathcal{N}_{\mathcal{F}}e^{-\frac{nt^2}{2(B_{\mathcal{F}}+\epsilon)^2}}
% \end{align*}

By the technique of symmetrization, we can bound the difference between continuous loss and empirical loss(i.e., $I_1,\cdots,I_6$) by Rademacher complexity. We first introduce Rademacher complexity.
\begin{definition}
	The Rademacher complexity of a set $A \subseteq \mathbb{R}^N$ is defined as
	\begin{equation*}
		\mathfrak{R}_N(A) = \mathbb{E}_{\{\sigma_i\}_{k=1}^N}\left[\sup_{a\in A}\frac{1}{N}\sum_{k=1}^N \sigma_k a_k\right],
	\end{equation*}
	where,   $\{\sigma_k\}_{k=1}^N$ are $N$ i.i.d  Rademacher variables with $\mathbb{P}(\sigma_k = 1) = \mathbb{P}(\sigma_k = -1) = \frac{1}{2}.$
	The Rademacher complexity of  function class $\mathcal{F}$ associate with random sample $\{X_k\}_{k=1}^{N}$ is defined as
	\begin{equation*}
		\mathfrak{R}_N(\mathcal{F}) = \mathbb{E}_{\{X_k,\sigma_k\}_{k=1}^{N}}\left[\sup_{u\in \mathcal{F}}\frac{1}{N}\sum_{k=1}^N \sigma_k u(X_k)\right].
	\end{equation*}
\end{definition}

\begin{Lemma} \label{symmetrization}
There holds
\begin{align*}
&\mathbb{E}_{\{{X_k}\}_{k=1}^{N}}I_i\leq4|\Omega|\mathfrak{R}_N(\mathcal{F}_i),\quad i=1,\cdots,4\\
&\mathbb{E}_{\{{Y_k}\}_{k=1}^{M}}I_i\leq\frac{4|\partial\Omega|}{\beta}\mathfrak{R}_M(\mathcal{F}_i),\quad i=5,6
\end{align*}
with
\begin{align*}
&\mathcal{F}_1:=\left\{\sum_{i,j=1}^{d}a_{ij}\partial_iu\partial_jv:u,v\in\mathcal{P}\right\},
&&\mathcal{F}_2:=\left\{\sum_{i=1}^{d}b_i\partial_iuv:u,v\in\mathcal{P}\right\}\\
&\mathcal{F}_3:=\left\{cuv:u,v\in\mathcal{P}\right\},
&&\mathcal{F}_4:=\left\{fv:u,v\in\mathcal{P}\right\}\\
&\mathcal{F}_5:=\left\{\frac{\alpha}{2}T_0uT_0v:u,v\in\mathcal{P}\right\},
&&\mathcal{F}_6:=\left\{gT_0v:u,v\in\mathcal{P}\right\}
\end{align*}
\end{Lemma}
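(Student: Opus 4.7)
The plan is to invoke the classical symmetrization trick from empirical process theory in a uniform way for all six terms. Each $I_i$ has the structure $I_i = 2 C_i \sup_{u,v \in \mathcal{P}} |\mathbb{E} h_{u,v}(Z) - \frac{1}{n_i}\sum_{k=1}^{n_i} h_{u,v}(Z_k)|$, where $Z$ denotes a sample drawn either from $U(\Omega)$ or from $U(\partial\Omega)$, $n_i$ is the appropriate sample size ($N$ or $M$), $C_i \in \{|\Omega|,\, |\partial\Omega|/\beta\}$, and $h_{u,v}$ is the integrand that defines $\mathcal{F}_i$. Hence it suffices to establish a single generic bound of the form
\[
\mathbb{E}\sup_{h\in\mathcal{F}} \Bigl|\mathbb{E} h(Z) - \tfrac{1}{n}\sum_{k=1}^n h(Z_k)\Bigr| \;\leq\; 2\, \mathbb{E}\sup_{h\in\mathcal{F}}\Bigl|\tfrac{1}{n}\sum_{k=1}^n \sigma_k h(Z_k)\Bigr|,
\]
and then to pass from the absolute value on the right to the signed Rademacher complexity $\mathfrak{R}_n(\mathcal{F})$ used in the statement.

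I would carry out the symmetrization in the textbook manner. First, introduce a ghost sample $\{Z_k'\}_{k=1}^n$ iid with the same law as $\{Z_k\}$ and independent of it; then replace $\mathbb{E} h(Z)$ by $\mathbb{E}_{Z'}[\frac{1}{n}\sum h(Z_k')]$ and use Jensen's inequality to pull $\mathbb{E}_{Z'}$ outside the supremum. Next, use the exchangeability of each pair $(Z_k, Z_k')$ to insert iid Rademacher signs $\sigma_k$ in front of $h(Z_k') - h(Z_k)$ without changing the joint distribution. Finally, apply the triangle inequality and the fact that $\{Z_k'\}$ has the same distribution as $\{Z_k\}$ to split the sum into two identical halves, yielding the factor $2$ above.

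To remove the remaining absolute value, I would use that each $\mathcal{F}_i$ is closed under $h \mapsto -h$: the class $\mathcal{P}$ from Theorem \ref{app error} is invariant under $u \mapsto -u$ (negating the weights of the last affine map preserves both the architectural weight bound and the constraint $\|u\|_{H^1(\Omega)} \leq 2$, since $\|{-u}\|_{H^1} = \|u\|_{H^1}$), and each $\mathcal{F}_i$ is linear in $v$ (and also in $u$ for $\mathcal{F}_1$, $\mathcal{F}_2$, $\mathcal{F}_3$, $\mathcal{F}_5$), so it inherits the symmetry. Under this symmetry one has the pointwise identity $\sup_{h\in\mathcal{F}_i}|a(h)| = \sup_{h\in\mathcal{F}_i} a(h)$ for the linear functional $a(h) = \frac{1}{n}\sum \sigma_k h(Z_k)$, so the absolute value may be dropped and the right-hand side becomes exactly $\mathfrak{R}_n(\mathcal{F}_i)$. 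Multiplying by the prefactor $2 C_i$ delivers the constants $4|\Omega|$ and $4|\partial\Omega|/\beta$ in the claim.

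I do not anticipate a substantive obstacle: the computation is routine empirical-process theory. The only point requiring minor care is the justification of the negation-closure of each $\mathcal{F}_i$, which is what keeps the final constant at $4$ rather than the $8$ that would arise from a generic bound $\mathbb{E}\sup_h|\frac{1}{n}\sum \sigma_k h(Z_k)| \leq 2\,\mathfrak{R}_n(\mathcal{F})$.
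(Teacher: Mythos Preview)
Your proposal is correct and follows essentially the same ghost-sample symmetrization argument as the paper's proof. Your explicit justification that each $\mathcal{F}_i$ is closed under negation (via $v\mapsto -v$ in $\mathcal{P}$) makes precise the step where the paper passes from $\sup_{h}|\,\cdot\,|$ to $\sup_{h}(\,\cdot\,)$, which is exactly what secures the constant $4$ rather than $8$.
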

\begin{proof}
We only present the proof with respect to $I_3$ since other inequalities can be shown similarly. We take $\{\widetilde{X_k}\}_{k=1}^{N}$ as an independent copy of $\{{X_k}\}_{k=1}^{N}$, then
\begin{align*}
I_3&=2|\Omega|\sup_{u,v\in\mathcal{P}}\left|\mathbb{E}_{X\sim U(\Omega)}(cuv)(X)-\frac{1}{N}\sum_{k=1}^{N}(cuv)(X_k)\right|\\	
&\leq\frac{2|\Omega|}{N}\sup_{u,v\in\mathcal{P}}\left|\mathbb{E}_{\{\widetilde{X_k}\}_{k=1}^{N}}\sum_{k=1}^{N}\left[(cuv)(\widetilde{X_k})-(cuv)(X_k)\right]\right|\\
&\leq\frac{2|\Omega|}{N}\mathbb{E}_{\{\widetilde{X_k}\}_{k=1}^{N}}\sup_{u,v\in\mathcal{P}}\left|\sum_{k=1}^{N}\left[(cuv)(\widetilde{X_k})-(cuv)(X_k)\right]\right|
\end{align*}
Hence
\begin{align*}
\mathbb{E}_{\{{X_k}\}_{k=1}^{N}}I_3&\leq\frac{2|\Omega|}{N}\mathbb{E}_{\{X_k,\widetilde{X_k}\}_{k=1}^{N}}\sup_{u,v\in\mathcal{P}}\left|\sum_{k=1}^{N}\left[(cuv)(\widetilde{X_k})-(cuv)(X_k)\right]\right|\\
&=\frac{2|\Omega|}{N}\mathbb{E}_{\{X_k,\widetilde{X_k},\sigma_k\}_{k=1}^{N}}\sup_{u,v\in\mathcal{P}}\left|\sum_{k=1}^{N}\sigma_k\left[(cuv)(\widetilde{X_k})-(cuv)(X_k)\right]\right|\\
&=\frac{2|\Omega|}{N}\mathbb{E}_{\{X_k,\widetilde{X_k},\sigma_k\}_{k=1}^{N}}\sup_{u,v\in\mathcal{P}}\\
&\quad\ \max\left\{\sum_{k=1}^{N}\sigma_k\left[(cuv)(\widetilde{X_k})-(cuv)(X_k)\right],\sum_{k=1}^{N}\sigma_k\left[(cuv)({X_k})-(cuv)(\widetilde{X_k})\right]\right\}
\end{align*}
where the second step is due to the fact that the insertion of Rademacher variables doesn't change the distribution. We note that
\begin{align*}
&\mathbb{E}_{\{X_k,\widetilde{X_k},\sigma_k\}_{k=1}^{N}}\sup_{u,v\in\mathcal{P}}\sum_{k=1}^{N}\sigma_k\left[(cuv)(\widetilde{X_k})-(cuv)(X_k)\right]\\
&\leq\mathbb{E}_{\{X_k,\widetilde{X_k},\sigma_k\}_{k=1}^{N}}\sup_{u,v\in\mathcal{P}}\sum_{k=1}^{N}\sigma_k(cuv)(\widetilde{X_k})+\mathbb{E}_{\{X_k,\widetilde{X_k},\sigma_k\}_{k=1}^{N}}\sup_{u,v\in\mathcal{P}}\sum_{k=1}^{N}-\sigma_k(cuv)(X_k)\\
&=2\mathbb{E}_{\{X_k,\sigma_k\}_{k=1}^{N}}\sup_{u,v\in\mathcal{P}}\sum_{k=1}^{N}\sigma_k(cuv)(X_k)
\end{align*}
Similarly,
\begin{align*}
\mathbb{E}_{\{X_k,\widetilde{X_k},\sigma_k\}_{k=1}^{N}}\sup_{u,v\in\mathcal{P}}\sum_{k=1}^{N}\sigma_k\left[(cuv)({X_k})-(cuv)(\widetilde{X_k})\right]\leq2\mathbb{E}_{\{X_k,\sigma_k\}_{k=1}^{N}}\sup_{u,v\in\mathcal{P}}\sum_{k=1}^{N}\sigma_k(cuv)(X_k)
\end{align*}
Therefore
\begin{align*}
\mathbb{E}_{\{{X_k}\}_{k=1}^{N}}I_3\leq4|\Omega|\mathfrak{R}_N(\mathcal{F}_3)
\end{align*}

\end{proof}

In order to bound Rademacher complexities, we need the concept of covering numbers.
\begin{Definition}
An $\epsilon$-cover of a set $T$ in a metric space $(S, \tau)$
is a subset $T_c\subset S$ such  that for each $t\in T$, there exists a $t_c\in T_c$ such that $\tau(t, t_c) \leq\epsilon$. The $\epsilon$-covering number of $T$, denoted as $\mathcal{C}(\epsilon, T,\tau)$ is defined to be the minimum cardinality among all $\epsilon$-cover of $T$ with respect to the metric $\tau$.
\end{Definition}

In Euclidean space, we can establish an upper bound of the covering number for a bounded set easily.
\begin{Lemma} \label{covering number Euclidean space}
	Suppose that $T\subset\mathbb{R}^d$ and $\|t\|_2\leq B$ for $t\in T$, then
	\begin{equation*}
		\mathcal{C}(\epsilon,T,\|\cdot\|_2)\leq\left(\frac{2B\sqrt{d}}{\epsilon}\right)^d.
	\end{equation*}
\end{Lemma}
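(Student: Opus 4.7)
The plan is to prove this by a simple volume/grid covering argument. Since every $t\in T$ satisfies $\|t\|_2\leq B$, we also have $\|t\|_\infty\leq B$, so $T$ is contained in the axis-aligned hypercube $Q:=[-B,B]^d$. The strategy is to tile $Q$ by smaller sub-cubes whose Euclidean diameter is at most $2\epsilon$, so that each sub-cube can be covered by a closed $\ell_2$-ball of radius $\epsilon$ centered at the sub-cube's centroid.

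Concretely, I would choose the side length $s:=2\epsilon/\sqrt{d}$, so that a sub-cube of side $s$ has $\ell_2$-diameter $s\sqrt{d}=2\epsilon$, and hence the Euclidean distance from the centroid to any point of the sub-cube is at most $\epsilon$. Partitioning $Q$ by a uniform grid of such sub-cubes uses $\lceil 2B/s\rceil^d=\lceil B\sqrt{d}/\epsilon\rceil^d$ sub-cubes. Letting $T_c$ be the collection of centroids of those sub-cubes that intersect $T$, we obtain a (possibly smaller) subset of $\mathbb{R}^d$ which is an $\epsilon$-cover of $T$ in the $\|\cdot\|_2$ metric: for each $t\in T$, the sub-cube containing $t$ contributes its centroid $t_c\in T_c$, and $\|t-t_c\|_2\leq\epsilon$.

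It then remains to bound the ceiling. If $B\sqrt{d}/\epsilon\geq 1$, then $\lceil B\sqrt{d}/\epsilon\rceil\leq 2B\sqrt{d}/\epsilon$, giving the claimed bound $(2B\sqrt{d}/\epsilon)^d$. If instead $B\sqrt{d}/\epsilon<1$, then a single point (say the origin) already $\epsilon$-covers $T$ since $\|t\|_2\leq B<\epsilon/\sqrt{d}\leq\epsilon$, and the bound $(2B\sqrt{d}/\epsilon)^d\geq 1$ also holds trivially. Combining these cases yields $\mathcal{C}(\epsilon,T,\|\cdot\|_2)\leq(2B\sqrt{d}/\epsilon)^d$.

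There is no real obstacle here; the only mildly delicate point is keeping track of the relationship between cube diameter and ball radius (the factor $\sqrt{d}$) and handling the ceiling when $B\sqrt{d}/\epsilon$ is small, both of which are settled by the case split above.
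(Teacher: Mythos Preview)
Your proof is correct and follows essentially the same grid-covering argument as the paper: embed $T$ in the cube $[-B,B]^d$, lay down a uniform grid with mesh on the order of $\epsilon/\sqrt{d}$ so that every point of $T$ lies within $\ell_2$-distance $\epsilon$ of some grid point, and count the grid points. The only minor slip is the assertion that $(2B\sqrt{d}/\epsilon)^d\geq 1$ ``trivially'' when $B\sqrt{d}/\epsilon<1$---this fails once $\epsilon>2B\sqrt{d}$, but the lemma is only used for small $\epsilon$ and the paper's proof makes the same tacit assumption.
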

\begin{proof}
	Let $m=\left\lfloor\frac{2B\sqrt{d}}{\epsilon}\right\rfloor$ and define
	\begin{equation*}
		T_c=\left\{-B+\frac{\epsilon}{\sqrt{d}},-B+\frac{2\epsilon}{\sqrt{d}},\cdots,-B+\frac{m\epsilon}{\sqrt{d}}\right\}^d,
	\end{equation*}
	then for $t\in T$, there exists $t_c\in T_c$ such that
	\begin{equation*}
		\|t-t_c\|_2\leq\sqrt{\sum_{i=1}^{d}\left(\frac{\epsilon}{\sqrt{d}}\right)^2}=\epsilon.
	\end{equation*}
	Hence
	\begin{equation*}
		\mathcal{C}(\epsilon,T,\|\cdot\|_2)\leq|T_c|=m^d\leq\left(\frac{2B\sqrt{d}}{\epsilon}\right)^d.
	\end{equation*}
\end{proof}

A Lipschitz parameterization allows us to translate a cover of the function space into a cover of the parameter space. Such a property plays an essential role in our analysis of statistical error.
\begin{Lemma} \label{covering number Lipshcitz parameterization}
	Let $\mathcal{F}$ be a parameterized class of functions: $\mathcal{F} = \{f(x; \theta) : \theta\in\Theta \}$. Let $\|\cdot\|_{\Theta}$ be a norm on $\Theta$ and let $\|\cdot\|_{\mathcal{F}}$ be a norm on $\mathcal{F}$. Suppose that the mapping $\theta \mapsto f(x;\theta)$ is L-Lipschitz, that is,
	\begin{equation*}
		\left\|f(x;\theta)-f\left(x;\widetilde{\theta}\right)\right\|_{\mathcal{F}} \leq L\left\|\theta-\widetilde{\theta}\right\|_{\Theta},
	\end{equation*}
	then for any $\epsilon>0$, $\mathcal{C}\left(\epsilon, \mathcal{F},\|\cdot\|_{\mathcal{F}}\right) \leq \mathcal{C}\left(\epsilon / L, \Theta,\|\cdot\|_{\Theta}\right)$.
\end{Lemma}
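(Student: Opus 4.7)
The plan is to transfer a cover of the parameter space $\Theta$ directly to a cover of the function class $\mathcal{F}$ through the parameterization map, exploiting nothing beyond the stated Lipschitz hypothesis. The statement is essentially a pushforward property of covering numbers under Lipschitz maps, so the argument should be short.

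First, I would let $\Theta_c \subset \Theta$ be a minimal $(\epsilon/L)$-cover of $\Theta$ with respect to $\|\cdot\|_\Theta$, so that $|\Theta_c| = \mathcal{C}(\epsilon/L, \Theta, \|\cdot\|_\Theta)$. I would then define the candidate cover of $\mathcal{F}$ as $\mathcal{F}_c := \{f(x;\theta_c) : \theta_c \in \Theta_c\}$; clearly $|\mathcal{F}_c| \le |\Theta_c|$.

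The key step is to verify that $\mathcal{F}_c$ is indeed an $\epsilon$-cover of $\mathcal{F}$. Given any $f(x;\theta) \in \mathcal{F}$, by definition of $\Theta_c$ there exists $\theta_c \in \Theta_c$ with $\|\theta - \theta_c\|_\Theta \le \epsilon/L$. Applying the Lipschitz hypothesis of the lemma, I obtain
\begin{equation*}
\|f(x;\theta) - f(x;\theta_c)\|_{\mathcal{F}} \le L\|\theta - \theta_c\|_\Theta \le L \cdot \frac{\epsilon}{L} = \epsilon,
\end{equation*}
so $f(x;\theta_c) \in \mathcal{F}_c$ witnesses that $f(x;\theta)$ is within distance $\epsilon$ of the cover. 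Hence $\mathcal{F}_c$ is a valid $\epsilon$-cover and $\mathcal{C}(\epsilon, \mathcal{F}, \|\cdot\|_{\mathcal{F}}) \le |\mathcal{F}_c| \le \mathcal{C}(\epsilon/L, \Theta, \|\cdot\|_\Theta)$, which is the claimed bound.

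There is no real obstacle here; the proof is a one-line consequence of the Lipschitz condition once the correct cover radius $\epsilon/L$ is chosen on the parameter side. The only thing to be slightly careful about is that the Lipschitz bound must hold uniformly in $x$ with respect to the $\mathcal{F}$-norm (which is part of the stated hypothesis), so that the pointwise-parameterized functions $f(\cdot;\theta_c)$ genuinely lie in $\mathcal{F}$ and the distance comparison makes sense.
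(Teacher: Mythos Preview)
Your proposal is correct and essentially identical to the paper's own proof: both take a minimal $\epsilon/L$-cover $\{\theta_i\}$ of $\Theta$, push it forward to $\{f(x;\theta_i)\}$, and use the Lipschitz hypothesis to conclude this is an $\epsilon$-cover of $\mathcal{F}$. There is nothing to add.
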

\begin{proof}
	Suppose that $\mathcal{C}\left(\epsilon / L, \Theta,\|\cdot\|_{\Theta}\right)=n$ and $\{\theta_i\}_{i=1}^{n}$ is an $\epsilon / L$-cover of $\Theta$. Then for any $\theta\in\Theta$, there exists $1\leq i\leq n$ such that
	\begin{equation*}
		\left\|f(x;\theta)-f\left(x;{\theta}_i\right)\right\|_{\mathcal{F}} \leq L\left\|\theta-{\theta}_i\right\|_{\Theta}\leq\epsilon.
	\end{equation*}
	Hence $\{f(x;\theta_i)\}_{i=1}^{n}$ is an $\epsilon$-cover of $\mathcal{F}$, implying that $\mathcal{C}\left(\epsilon, \mathcal{F},\|\cdot\|_{\mathcal{F}}\right) \leq n$.
\end{proof}

To find the relation between Rademacher complexity and covering number, we first need the Massart¡¯s finite class lemma stated below.
\begin{Lemma}\label{Mfinite}
	For any finite set $A \subset \mathbb{R}^N$ with diameter $D = \sup_{a\in A}\|a\|_2$,
	\begin{equation*}
	\mathfrak{R}_N(A) \leq \frac{D}{N}\sqrt{2\log |A|}.
	\end{equation*}
\end{Lemma}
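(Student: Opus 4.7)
The plan is to use the standard exponential/Chernoff trick together with Jensen's inequality, which is the textbook route for Massart's lemma. I would start from the definition
\[
\mathfrak{R}_N(A) = \mathbb{E}_\sigma \left[\sup_{a\in A}\frac{1}{N}\sum_{k=1}^N \sigma_k a_k\right],
\]
and work with the quantity $Z := \sup_{a\in A} S(a)$, where $S(a) := \sum_{k=1}^N \sigma_k a_k$. The goal is to bound $\mathbb{E}[Z]$ from above by $D\sqrt{2\log|A|}$.

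The key step is to introduce a free parameter $\lambda>0$ and exponentiate. By Jensen's inequality applied to the convex function $x\mapsto e^{\lambda x}$,
\[
\exp(\lambda \,\mathbb{E}[Z]) \leq \mathbb{E}[\exp(\lambda Z)] = \mathbb{E}\bigl[\sup_{a\in A}\exp(\lambda S(a))\bigr] \leq \sum_{a\in A}\mathbb{E}\bigl[\exp(\lambda S(a))\bigr],
\]
where the last step replaces the supremum by the sum over the finite set $A$. For each fixed $a$, the $\sigma_k$'s are independent, so the moment generating function factorizes, and since $\sigma_k$ is Rademacher I would use the standard sub-Gaussian bound
\[
\mathbb{E}\bigl[e^{\lambda \sigma_k a_k}\bigr] = \cosh(\lambda a_k) \leq \exp\!\left(\tfrac{\lambda^2 a_k^2}{2}\right),
\]
(proved by comparing Taylor series of $\cosh$ and the Gaussian). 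Multiplying over $k$ gives $\mathbb{E}[\exp(\lambda S(a))] \leq \exp(\lambda^2 \|a\|_2^2 / 2) \leq \exp(\lambda^2 D^2/2)$.

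Combining the two displays yields $\exp(\lambda\,\mathbb{E}[Z]) \leq |A|\exp(\lambda^2 D^2/2)$, i.e.
\[
\mathbb{E}[Z] \leq \frac{\log|A|}{\lambda} + \frac{\lambda D^2}{2}.
\]
The final step is to optimize the right-hand side over $\lambda>0$: minimization in $\lambda$ yields $\lambda^\star = \sqrt{2\log|A|}/D$ and the value $D\sqrt{2\log|A|}$. Dividing by $N$ gives the claim. There is no serious obstacle here; the only mildly delicate point is the sub-Gaussian inequality $\cosh(x)\leq e^{x^2/2}$, which is elementary via term-by-term comparison of the power series $(2k)! \geq 2^k k!$. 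Everything else is bookkeeping.
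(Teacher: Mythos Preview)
Your proof is correct and is precisely the standard argument for Massart's lemma. The paper does not give its own proof at all; it simply cites \cite[Lemma~26.8]{shalev2014understanding}, so your self-contained derivation via the Chernoff bound, the sub-Gaussian estimate $\cosh(x)\le e^{x^2/2}$, and optimization in $\lambda$ is exactly what one finds behind that citation.
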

\begin{proof}
See, for example, \cite[Lemma 26.8]{shalev2014understanding}.
\end{proof}

\begin{Lemma} \label{chaining}
Let $\mathcal{F}$ be a function class and $\|f\|_{\infty}\leq B$ for any $f\in\mathcal{F}$, we have
\begin{equation*}
\mathfrak{R}_N(\mathcal{F})\leq\inf_{0<\delta<B/2}\left(4\delta+\frac{12}{\sqrt{N}}\int_{\delta}^{B/2}\sqrt{\log\mathcal{C}(\epsilon,\mathcal{F},\|\cdot\|_{\infty})}d\epsilon\right).
\end{equation*}
\end{Lemma}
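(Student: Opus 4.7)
The plan is to carry out Dudley's classical \emph{chaining} argument at dyadic scales. Fix $\delta\in(0,B/2)$ and set $\epsilon_j := B\cdot 2^{-j}$ for $j=0,1,2,\dots$. Choose the integer $J$ so that $\epsilon_{J+2}<\delta\leq\epsilon_{J+1}$, whence $\epsilon_J\in[2\delta,4\delta)$. For $j\geq 1$ let $\mathcal{G}_j\subset\mathcal{F}$ be a minimum $\epsilon_j$-cover of $\mathcal{F}$ in $\|\cdot\|_\infty$, so $|\mathcal{G}_j|=\mathcal{C}(\epsilon_j,\mathcal{F},\|\cdot\|_\infty)$; for $j=0$ let $\mathcal{G}_0=\{0\}$, which is a $B$-cover by the hypothesis $\|f\|_\infty\leq B$. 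For each $f\in\mathcal{F}$, pick $\pi_j f\in\mathcal{G}_j$ with $\|\pi_j f-f\|_\infty\leq \epsilon_j$ and use the telescoping identity
\[
f(X_k)=\pi_0 f(X_k)+\sum_{j=0}^{J-1}(\pi_{j+1}f-\pi_j f)(X_k)+(f-\pi_J f)(X_k).
\]

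Multiplying by $\sigma_k/N$, summing over $k$, taking $\sup_{f\in\mathcal{F}}$ and then expectation, the initial term vanishes because $\pi_0 f\equiv 0$, and the tail is bounded in absolute value by $\|f-\pi_J f\|_\infty\leq \epsilon_J<4\delta$, contributing at most $4\delta$. For the middle sum, at the $j$-th level the difference $\pi_{j+1}f-\pi_j f$ ranges over a finite set of cardinality at most $|\mathcal{G}_j|\,|\mathcal{G}_{j+1}|\leq\mathcal{C}(\epsilon_{j+1},\mathcal{F},\|\cdot\|_\infty)^2$ (using monotonicity of $\mathcal{C}$) and with sup-norm at most $\epsilon_j+\epsilon_{j+1}=3\epsilon_{j+1}$. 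Applying Lemma~\ref{Mfinite} to the corresponding subset of $\mathbb{R}^N$ (whose Euclidean diameter is at most $3\epsilon_{j+1}\sqrt{N}$) yields the per-level bound
\[
\mathbb{E}\sup_{f\in\mathcal{F}}\frac{1}{N}\sum_{k=1}^N\sigma_k(\pi_{j+1}f-\pi_j f)(X_k)\leq\frac{6\,\epsilon_{j+1}}{\sqrt{N}}\sqrt{\log\mathcal{C}(\epsilon_{j+1},\mathcal{F},\|\cdot\|_\infty)}.
\]

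Finally I would convert the discrete sum into the entropy integral using monotonicity of $\epsilon\mapsto\mathcal{C}(\epsilon,\mathcal{F},\|\cdot\|_\infty)$: for every $j$, since $\sqrt{\log\mathcal{C}(\epsilon)}\geq\sqrt{\log\mathcal{C}(\epsilon_{j+1})}$ on $[\epsilon_{j+2},\epsilon_{j+1}]$ and this interval has length $\epsilon_{j+1}/2$, one has $\epsilon_{j+1}\sqrt{\log\mathcal{C}(\epsilon_{j+1})}\leq 2\int_{\epsilon_{j+2}}^{\epsilon_{j+1}}\sqrt{\log\mathcal{C}(\epsilon,\mathcal{F},\|\cdot\|_\infty)}\,d\epsilon$. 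Summing over $j=0,\dots,J-1$ telescopes to $2\int_{\epsilon_{J+1}}^{\epsilon_1}\sqrt{\log\mathcal{C}(\epsilon)}\,d\epsilon\leq 2\int_\delta^{B/2}\sqrt{\log\mathcal{C}(\epsilon)}\,d\epsilon$, using $\epsilon_1=B/2$ and $\epsilon_{J+1}\geq\delta$. Assembling the pieces gives $\mathfrak{R}_N(\mathcal{F})\leq 4\delta+\frac{12}{\sqrt{N}}\int_\delta^{B/2}\sqrt{\log\mathcal{C}(\epsilon,\mathcal{F},\|\cdot\|_\infty)}\,d\epsilon$, and taking the infimum over $\delta\in(0,B/2)$ finishes the proof.

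The main obstacle is the careful bookkeeping of constants: the stopping index $J$ must be chosen so that the dyadic index shift in the sum-to-integral step produces the upper limit $B/2$ (coming from $\epsilon_1$) rather than $B$, and simultaneously so that the leftover tail is cleanly absorbed as $4\delta$ (not $2\delta$, since the shift forces $J$ to be one step earlier). The conceptual content---telescoping decomposition, per-level Massart bound on finitely many differences, and sum-to-integral conversion via covering-number monotonicity---is classical, but the precise constants $4$ and $12$ in the statement depend on executing these three steps with a consistent choice of dyadic grid.
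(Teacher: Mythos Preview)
Your proof is correct and follows essentially the same chaining argument as the paper's own proof: dyadic scales $\epsilon_j=B\cdot 2^{-j}$, the zero function as the coarsest cover, telescoping decomposition, Massart's lemma at each level with diameter $3\epsilon_{j+1}\sqrt{N}$ and cardinality $\leq\mathcal{C}(\epsilon_{j+1})^2$, and the same sum-to-integral conversion using $\epsilon_{j+1}-\epsilon_{j+2}=\epsilon_{j+1}/2$; the only difference is a harmless index shift (your $j$ starts at $0$ where the paper's starts at $1$). One cosmetic point: you write $\mathcal{G}_j\subset\mathcal{F}$, but the paper's covering-number definition allows external covers, so just drop that inclusion to match $|\mathcal{G}_j|=\mathcal{C}(\epsilon_j,\mathcal{F},\|\cdot\|_\infty)$ exactly.
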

\begin{proof}
We apply the chaining method. Set $\epsilon_k=2^{-k+1}B$. We denote by $\mathcal{F}_k$ such that $\mathcal{F}_k$ is an $\epsilon_k$-cover of $\mathcal{F}$ and $\left|\mathcal{F}_k\right|=\mathcal{C}(\epsilon_k,\mathcal{F},\|\cdot\|_{\infty})$. Hence for any $u\in\mathcal{F}$, there exists $u_k\in\mathcal{F}_k$ such that $\|u-u_k\|_\infty\leq\epsilon_k$. Let $K$ be a positive integer determined later. We have
\begin{align*}
	&\mathfrak{R}_N(\mathcal{F}) = \mathbb{E}_{\{\sigma_i,X_i\}_{i=1}^N} \left[\sup _{u \in \mathcal{F}} \frac{1}{N}\sum_{i=1}^{N} \sigma_{i} u\left(X_{i}\right) \right]\\\
	&=\mathbb{E}_{\{\sigma_i,X_i\}_{i=1}^N} \left[\frac{1}{N} \sup _{u \in \mathcal{F}} \sum_{i=1}^{N} \sigma_{i}\left(u\left(X_{i}\right)-u_K\left(X_{i}\right)\right)+\sum_{j=1}^{K-1} \sum_{i=1}^{N} \sigma_{i}\left(u_{j+1}\left(X_{i}\right)-u_j\left(X_{i}\right)\right)+\sum_{i=1}^{N} \sigma_{i} u_1\left(X_{i}\right)\right] \\
	&\leq \mathbb{E}_{\{\sigma_i,X_i\}_{i=1}^N} \left[\sup _{u \in \mathcal{F}} \frac{1}{N}\sum_{i=1}^{N} \sigma_{i}\left(u\left(X_{i}\right)-u_K\left(X_{i}\right)\right)\right]+\sum_{j=1}^{K-1} \mathbb{E}_{\{\sigma_i,X_i\}_{i=1}^N} \left[\sup _{u \in \mathcal{F}} \frac{1}{N}\sum_{i=1}^{N} \sigma_{i}\left(u_{j+1}\left(X_{i}\right)-u_j\left(X_{i}\right)\right)\right] \\
	&\quad+\mathbb{E}_{\{\sigma_i,X_i\}_{i=1}^N}\left[ \frac{1}{N}\sup _{u \in \mathcal{F}_1} \frac{1}{N}\sum_{i=1}^{N} \sigma_{i} u(X_i)\right].
\end{align*}
We can choose $\mathcal{F}_1=\{0\}$ to eliminate the third term. For the first term,
\begin{equation*}
	\mathbb{E}_{\{\sigma_i,X_i\}_{i=1}^N}\sup _{u \in \mathcal{F}} \frac{1}{N}\left[\sum_{i=1}^{N} \sigma_{i}\left(u\left(X_{i}\right)-u_K\left(X_{i}\right)\right)\right]
	\leq\mathbb{E}_{\{\sigma_i,X_i\}_{i=1}^N}\sup _{u \in \mathcal{F}} \frac{1}{N}\sum_{i=1}^{N} \left|\sigma_{i}\right|\left\|u-u_K\right\|_{\infty}\leq\epsilon_K.
\end{equation*}
For the second term, for any fixed samples $\{X_i\}_{i=1}^N$, we define
$$
V_j := \{(u_{j+1}\left(X_{1}\right)-u_j\left(X_{1}\right),\dots,u_{j+1}\left(X_{N}\right)-u_j\left(X_{N}\right)) \in \mathbb{R}^N: u\in\mathcal{F} \}.
$$
Then, for any $v^j\in V_j$,
\begin{align*}
	\|v^j\|_2&=\left(\sum_{i=1}^{n}\left|u_{j+1}(X_i)-u_j(X_i)\right|^2\right)^{1/2}
	\leq \sqrt{n}\left\|u_{j+1}-u_j\right\|_{\infty}\\
	&\leq\sqrt{n}\left\|u_{j+1}-u\right\|_{\infty}+\sqrt{n}\left\|u_{j}-u\right\|_{\infty}
	=\sqrt{n}\epsilon_{j+1}+\sqrt{n}\epsilon_{j}=3\sqrt{n}\epsilon_{j+1}.
\end{align*}
Applying Lemma $\ref{Mfinite}$, we have
\begin{align*}
	&\sum_{j=1}^{K-1} \mathbb{E}_{\{\sigma\}_{i=1}^N}\left[\sup _{u \in \mathcal{F}} \frac{1}{N}\sum_{i=1}^{N} \sigma_{i}\left(u_{j+1}\left(X_{i}\right)-u_j\left(X_{i}\right)\right)\right] \\
	&=\sum_{j=1}^{K-1} \mathbb{E}_{\{\sigma_i\}_{i=1}^N}\left[\sup _{v^j \in V_j} \frac{1}{N}\sum_{i=1}^{N} \sigma_{i}v_i^j\right]
	\leq\sum_{j=1}^{K-1}\frac{3\epsilon_{j+1}}{\sqrt{N}}\sqrt{2\log|V_j|}.
\end{align*}
By the definition of $V_j$, we know that $\left|V_j\right|\leq\left|\mathcal{F}_j\right|\left|\mathcal{F}_{j+1}\right|\leq\left|\mathcal{F}_{j+1}\right|^2$. Hence
\begin{align*}
	\sum_{j=1}^{K-1} \mathbb{E}_{\{\sigma_i,X_i\}_{i=1}^N} \left[\sup _{u \in \mathcal{F}} \frac{1}{N}\sum_{i=1}^{N} \sigma_{i}\left(u_{j+1}\left(X_{i}\right)-u_j\left(X_{i}\right)\right)\right] \leq\sum_{j=1}^{K-1}\frac{6\epsilon_{j+1}}{\sqrt{N}}\sqrt{\log \left|\mathcal{F}_{j+1}\right|}.
\end{align*}
Now we obtain
\begin{align*}
	\mathfrak{R}_N(\mathcal{F}) &
	\leq \epsilon_K+\sum_{j=1}^{K-1}\frac{6\epsilon_{j+1}}{\sqrt{N}}\sqrt{\log \left|\mathcal{F}_{j+1}\right|}\\
	&=\epsilon_K+\frac{12}{\sqrt{N}}\sum_{j=1}^{K-1}(\epsilon_{j+1}-\epsilon_{j+2})\sqrt{\log\mathcal{C}(\epsilon_{j+1},\mathcal{F},\|\cdot\|_{\infty})}\\
	&\leq\epsilon_{K}+\frac{12}{\sqrt{N}}\int_{\epsilon_{K+1}}^{B/2}\sqrt{\log\mathcal{C}(\epsilon,\mathcal{F},\|\cdot\|_{\infty})}d\epsilon.
\end{align*}
We conclude the lemma by choosing $K$ such that $\epsilon_{K+2}<\delta\leq\epsilon_{K+1}$ for any $0<\delta<B/2$.
\end{proof}

From Lemma $\ref{covering number Lipshcitz parameterization}$ we know that the key step to bound $\mathcal{C}(\epsilon,\mathcal{F}_i,\|\cdot\|_{\infty})$ with $\mathcal{F}_i$ defined in Lemma $\ref{symmetrization}$ is to compute the upper bound of Lipschitz constant of class $\mathcal{F}_i$, which is done in Lemma $\ref{f Lip}$-$\ref{Lip of Fi}$.
\begin{Lemma} \label{f Lip}
Let $\mathcal{D},\mathfrak{n}_{\mathcal{D}},n_i\in\mathbb{N}^+$, $n_\mathcal{D}=1$, $B_{\theta}\ge 1$ and $\rho$ be a bounded Lipschitz continuous function with $B_{\rho}, L_{\rho}\leq 1$. Assume that the parameterized function class
$
\mathcal{P}\subset\mathcal{N}_{\rho}\left(\mathcal{D}, \mathfrak{n}_{\mathcal{D}}, B_{\theta}\right)
$. For any $f(x;\theta)\in\mathcal{P}$, $f(x;\theta)$ is $\sqrt{\mathfrak{n}_{\mathcal{D}}}B_{\theta}^{\mathcal{D}-1}\left(\prod_{i=1}^{\mathcal{D}-1}n_i\right)$-Lipschitz continuous with respect to variable $\theta$, i.e.,
\begin{equation*}
\left|f(x;\theta)-{f}(x;\widetilde{\theta})\right|
\leq\sqrt{\mathfrak{n}_{\mathcal{D}}}B_{\theta}^{\mathcal{D}-1}\left(\prod_{i=1}^{\mathcal{D}-1}n_i\right)\left\|\theta-\widetilde{\theta}\right\|_2,\quad\forall x\in\Omega.
\end{equation*}
\end{Lemma}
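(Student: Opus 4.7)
The plan is to proceed by induction on the layer index, keeping track of how a perturbation of the parameters propagates forward through the network. Let $\theta=\{A_\ell,\mathbf{b}_\ell\}_{\ell=1}^{\mathcal{D}}$ and $\tilde\theta=\{\tilde A_\ell,\tilde{\mathbf{b}}_\ell\}_{\ell=1}^{\mathcal{D}}$, write $f_\ell,\tilde f_\ell$ for the two sequences of layer outputs under these parameter choices, and set $\Delta_\ell:=\|f_\ell-\tilde f_\ell\|_2$. Because $\Omega\subset[0,1]^d$ we have $\|f_0\|_\infty\leq 1$, and since $\rho$ is bounded by $B_\rho\leq 1$ every hidden activation satisfies $\|f_\ell\|_\infty\leq 1$, hence $\|f_\ell\|_2\leq\sqrt{n_\ell}$ for $1\leq\ell\leq\mathcal{D}-1$; the same bounds hold for $\tilde f_\ell$. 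These uniform norm controls on the intermediate activations are what will let us turn a perturbation of any single weight matrix into a bound that is independent of $x$.

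Next I would derive a one-step recursion. Expanding $f_\ell-\tilde f_\ell=\rho(A_\ell f_{\ell-1}+\mathbf{b}_\ell)-\rho(\tilde A_\ell\tilde f_{\ell-1}+\tilde{\mathbf{b}}_\ell)$, with $\rho$ interpreted as the identity when $\ell=\mathcal{D}$, the $1$-Lipschitz property of $\rho$ together with the triangle inequality yields
\[
\Delta_\ell \leq \|A_\ell\|_F\,\Delta_{\ell-1} + \|A_\ell-\tilde A_\ell\|_F\,\|\tilde f_{\ell-1}\|_2 + \|\mathbf{b}_\ell-\tilde{\mathbf{b}}_\ell\|_2.
\]
Inserting $\|A_\ell\|_F\leq B_\theta\sqrt{n_\ell n_{\ell-1}}$ together with the activation bound from the previous paragraph gives a recursion of the form $\Delta_\ell \leq B_\theta\sqrt{n_\ell n_{\ell-1}}\,\Delta_{\ell-1} + \gamma_\ell(\theta,\tilde\theta)$, where $\gamma_\ell(\theta,\tilde\theta)\leq\sqrt{n_{\ell-1}}\|A_\ell-\tilde A_\ell\|_F+\|\mathbf{b}_\ell-\tilde{\mathbf{b}}_\ell\|_2$.

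Starting from $\Delta_0=0$ and unrolling, I would then obtain $\Delta_\mathcal{D}=\sum_{\ell=1}^\mathcal{D}\bigl(\prod_{k=\ell+1}^\mathcal{D} B_\theta\sqrt{n_k n_{k-1}}\bigr)\gamma_\ell$. Using $B_\theta\geq 1$, $n_i\geq 1$, and $n_\mathcal{D}=1$, the scattered $\sqrt{n_k n_{k-1}}$ factors can be consolidated so that each prefactor is dominated by $B_\theta^{\mathcal{D}-1}\prod_{i=1}^{\mathcal{D}-1} n_i$; one then applies Cauchy--Schwarz to $\sum_\ell\gamma_\ell$ over the at most $\mathfrak{n}_\mathcal{D}$ nonzero coordinates of $\theta$ to obtain $\sum_\ell\gamma_\ell\leq\sqrt{\mathfrak{n}_\mathcal{D}}\,\|\theta-\tilde\theta\|_2$. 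The main obstacle is the bookkeeping in this last block: the $\sqrt{n_k n_{k-1}}$ factors generated by the intermediate Frobenius bounds must be combined cleanly into the single width product $\prod_{i=1}^{\mathcal{D}-1} n_i$ without picking up an extra power of the width, and the Cauchy--Schwarz step must be run against the sparse parameter vector so that the final constant is $\sqrt{\mathfrak{n}_\mathcal{D}}$ rather than the much larger number of matrix-entry positions.
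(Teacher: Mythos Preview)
Your overall strategy—propagate the parameter perturbation layer by layer and finish with Cauchy--Schwarz—is exactly what the paper does. The difference is the choice of norms: you track $\Delta_\ell=\|f_\ell-\tilde f_\ell\|_2$ and control $A_\ell$ via its Frobenius norm, whereas the paper works \emph{componentwise}, bounding each $|f_q^{(\ell)}-\tilde f_q^{(\ell)}|$ using the entrywise inequalities $|a^{(\ell)}_{qj}|\le B_\theta$, $|\tilde f_j^{(\ell-1)}|\le B_\rho\le 1$, and $|x_j|\le 1$. This gives the recursion
\[
|f_q^{(\ell)}-\tilde f_q^{(\ell)}|\le B_\theta\sum_{j=1}^{n_{\ell-1}}|f_j^{(\ell-1)}-\tilde f_j^{(\ell-1)}|+\sum_{j}|a^{(\ell)}_{qj}-\tilde a^{(\ell)}_{qj}|+|b^{(\ell)}_q-\tilde b^{(\ell)}_q|,
\]
and by induction $|f_q^{(\ell)}-\tilde f_q^{(\ell)}|\le\bigl(\prod_{i=1}^{\ell-1}n_i\bigr)B_\theta^{\ell-1}\|\theta-\tilde\theta\|_1$; a single $\ell_1$--$\ell_2$ inequality over the $\mathfrak n_{\mathcal D}$ nonzero coordinates then delivers exactly the stated constant.

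The step you flag as the main obstacle is a genuine gap, not just bookkeeping. Your $\gamma_\ell$ carries a weight $\sqrt{n_{\ell-1}}$ coming from $\|\tilde f_{\ell-1}\|_2$, and the claim $\sum_\ell\gamma_\ell\le\sqrt{\mathfrak n_{\mathcal D}}\,\|\theta-\tilde\theta\|_2$ is false in general. Take $\mathcal D=2$, $n_0=d$ large, $n_1=n_2=1$, biases zero, and let $A_1$ have a single nonzero entry $a^{(1)}_{11}$; then $\mathfrak n_{\mathcal D}=2$, $\prod_{i=1}^{\mathcal D-1}n_i=1$, and your unrolled bound gives $\Delta_2\le P_1\gamma_1= B_\theta\sqrt{d}\,|a^{(1)}_{11}-\tilde a^{(1)}_{11}|$, which exceeds the target $\sqrt{2}\,B_\theta\,\|\theta-\tilde\theta\|_2$ once $d>2$. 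The Frobenius/$\ell_2$ route is too lossy here because $\|\tilde f_{\ell-1}\|_2\le\sqrt{n_{\ell-1}}$ ignores that a sparse $A_\ell-\tilde A_\ell$ only touches a few coordinates of $\tilde f_{\ell-1}$; in particular the input-layer factor $\sqrt{n_0}=\sqrt d$ is not part of $\prod_{i=1}^{\mathcal D-1}n_i$ and cannot be absorbed. Switching to the paper's componentwise argument (entrywise weight bound, $\|\tilde f_{\ell-1}\|_\infty\le 1$, and $\ell_1$ control of $\theta-\tilde\theta$) removes the difficulty entirely.
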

\begin{proof}
For $\ell=2,\cdots,\mathcal{D}$(the argument for the case of $\ell=\mathcal{D}$ is slightly different),
\begin{align*}
	\left|f_q^{(\ell)}-\widetilde{f}_q^{(\ell)}\right|
	&=\left|\rho\left(\sum_{j=1}^{n_{\ell-1}}a_{qj}^{(\ell)}f_j^{(\ell-1)}+b_q^{(\ell)}\right)-\rho\left(\sum_{j=1}^{n_{\ell-1}}\widetilde{a}_{qj}^{(\ell)}\widetilde{f}_j^{(\ell-1)}+\widetilde{b}_q^{(\ell)}\right)\right|\\
	&\leq L_{\rho}\left|\sum_{j=1}^{n_{\ell-1}}a_{qj}^{(\ell)}f_j^{(\ell-1)}-\sum_{j=1}^{n_{\ell-1}}\widetilde{a}_{qj}^{(\ell)}\widetilde{f}_j^{(\ell-1)}+b_q^{(\ell)}-\widetilde{b}_q^{(\ell)}\right|\\
	&\leq L_{\rho}\sum_{j=1}^{n_{\ell-1}}\left|a_{qj}^{(\ell)}\right|\left|f_j^{(\ell-1)}-\widetilde{f}_j^{(\ell-1)}\right|+L_{\rho}\sum_{j=1}^{n_{\ell-1}}\left|a_{qj}^{(\ell)}-\widetilde{a}_{qj}^{(\ell)}\right|\left|\widetilde{f}_j^{(\ell-1)}\right|+L_{\rho}\left|b_q^{(\ell)}-\widetilde{b}_q^{(\ell)}\right|\\
	&\leq B_{\theta}L_{\rho}\sum_{j=1}^{n_{\ell-1}}\left|f_j^{(\ell-1)}-\widetilde{f}_j^{(\ell-1)}\right|+B_{\rho}L_{\rho}\sum_{j=1}^{n_{\ell-1}}\left|a_{qj}^{(\ell)}-\widetilde{a}_{qj}^{(\ell)}\right|+L_{\rho}\left|b_q^{(\ell)}-\widetilde{b}_q^{(\ell)}\right|\\
	&\leq B_{\theta}\sum_{j=1}^{n_{\ell-1}}\left|f_j^{(\ell-1)}-\widetilde{f}_j^{(\ell-1)}\right|+\sum_{j=1}^{n_{\ell-1}}\left|a_{qj}^{(\ell)}-\widetilde{a}_{qj}^{(\ell)}\right|+\left|b_q^{(\ell)}-\widetilde{b}_q^{(\ell)}\right|.
\end{align*}
For $\ell=1$,
\begin{align*}
	\left|f_q^{(1)}-\widetilde{f}_q^{(1)}\right|
	&= \left|\rho\left(\sum_{j=1}^{n_0}a_{qj}^{(1)}x_j^+b_q^{(1)}\right)-\rho\left(\sum_{j=1}^{n_0}\widetilde{a}_{qj}^{(1)}x_j+\widetilde{b}_q^{(1)}\right)\right|\\
	&\le \sum_{j=1}^{n_{0}}\left|a_{qj}^{(1)}-\widetilde{a}_{qj}^{(1)}\right|+\left|b_q^{(1)}-\widetilde{b}_q^{(1)}\right|
	=\sum_{j=1}^{\mathfrak{n}_1}\left|\theta_j-\widetilde{\theta}_j\right|.
\end{align*}
For $\ell=2$,
\begin{align*}
	\left|f_q^{(2)}-\widetilde{f}_q^{(2)}\right|
	&\leq B_{\theta}\sum_{j=1}^{n_{1}}\left|f_j^{(1)}-\widetilde{f}_j^{(1)}\right|+\sum_{j=1}^{n_{1}}\left|a_{qj}^{(2)}-\widetilde{a}_{qj}^{(2)}\right|+\left|b_q^{(2)}-\widetilde{b}_q^{(2)}\right|\\
	&\leq B_{\theta}\sum_{j=1}^{n_{1}}\sum_{k=1}^{\mathfrak{n}_1}\left|\theta_k-\widetilde{\theta}_k\right|+\sum_{j=1}^{n_{1}}\left|a_{qj}^{(2)}-\widetilde{a}_{qj}^{(2)}\right|+\left|b_q^{(2)}-\widetilde{b}_q^{(2)}\right|\\
	&\leq n_1B_{\theta}\sum_{j=1}^{\mathfrak{n}_2}\left|\theta_j-\widetilde{\theta}_j\right|.
\end{align*}
Assuming that for $\ell\geq2$,
\begin{align*}
\left|f_q^{(\ell)}-\widetilde{f}_q^{(\ell)}\right|
\leq \left(\prod_{i=1}^{\ell-1}n_i\right) B_{\theta}^{\ell-1}\sum_{j=1}^{\mathfrak{n}_{\ell}}\left|\theta_j-\widetilde{\theta}_j\right|,
\end{align*}
we have
\begin{align*}
	\left|f_q^{(\ell+1)}-\widetilde{f}_q^{(\ell+1)}\right|
	&\leq B_{\theta}\sum_{j=1}^{n_{\ell}}\left|f_j^{(\ell)}-\widetilde{f}_j^{(\ell)}\right|+\sum_{j=1}^{n_{\ell}}\left|a_{qj}^{(\ell+1)}-\widetilde{a}_{qj}^{(\ell+1)}\right|+\left|b_q^{(\ell+1)}-\widetilde{b}_q^{(\ell+1)}\right|\\
	&\leq B_{\theta}\sum_{j=1}^{n_{\ell}}\left(\prod_{i=1}^{\ell-1}n_i\right) B_{\theta}^{\ell-1}\sum_{k=1}^{\mathfrak{n}_1}\left|\theta_k-\widetilde{\theta}_k\right|+\sum_{j=1}^{n_{\ell}}\left|a_{qj}^{(\ell+1)}-\widetilde{a}_{qj}^{(\ell+1)}\right|+\left|b_q^{(\ell+1)}-\widetilde{b}_q^{(\ell+1)}\right|\\
	&\leq \left(\prod_{i=1}^{\ell}n_i\right) B_{\theta}^{\ell}\sum_{j=1}^{\mathfrak{n}_{\ell+1}}\left|\theta_j-\widetilde{\theta}_j\right|.
\end{align*}
Hence by induction and H$\mathrm{\ddot{o}}$lder inequality we conclude that
\begin{equation*}
\left|f-\widetilde{f}\right|
\leq \left(\prod_{i=1}^{\mathcal{D}-1}n_i\right) B_{\theta}^{\mathcal{D}-1}\sum_{j=1}^{\mathfrak{n}_{\mathcal{D}}}\left|\theta_j-\widetilde{\theta}_j\right|
\leq\sqrt{\mathfrak{n}_{\mathcal{D}}}B_{\theta}^{\mathcal{D}-1}\left(\prod_{i=1}^{\mathcal{D}-1}n_i\right)\left\|\theta-\widetilde{\theta}\right\|_2.
\end{equation*}
\end{proof}

\begin{Lemma}\label{f derivative bound}
Let $\mathcal{D},\mathfrak{n}_{\mathcal{D}},n_i\in\mathbb{N}^+$, $n_\mathcal{D}=1$, $B_{\theta}\ge 1$ and $\rho$ be a function such that $\rho'$ is bounded by $B_{\rho'}$. Assume that the parameterized function class
$
\mathcal{P}\subset\mathcal{N}_{\rho}\left(\mathcal{D}, \mathfrak{n}_{\mathcal{D}}, B_{\theta}\right)
$. Let $p=1,\cdots,d$. We have
\begin{align*}
	\left|\partial_{x_p}f_q^{(\ell)}\right|&\leq\left(\prod_{i=1}^{\ell-1}n_i\right)\left(B_{\theta}B_{\rho'}\right)^{\ell},
	\quad\ell=1,2,\cdots,\mathcal{D}-1,\\
	\left|\partial_{x_p}f\right|&\leq\left(\prod_{i=1}^{\mathcal{D}-1}n_i\right)B_{\theta}^{\mathcal{D}}B_{\rho'}^{\mathcal{D}-1}.
\end{align*}
\end{Lemma}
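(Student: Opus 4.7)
The plan is to induct on the layer index $\ell$, differentiating the recursion $f_\ell(x) = \rho(A_\ell f_{\ell-1}(x) + b_\ell)$ with respect to $x_p$ via the chain rule.

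First I would handle the base case $\ell=1$: writing $f_q^{(1)} = \rho\bigl(\sum_{j=1}^{n_0} a_{qj}^{(1)} x_j + b_q^{(1)}\bigr)$, direct differentiation gives $\partial_{x_p} f_q^{(1)} = \rho'(\cdot)\, a_{qp}^{(1)}$, and bounding each factor by $B_{\rho'}$ and $B_\theta$ respectively yields $|\partial_{x_p} f_q^{(1)}| \le B_\theta B_{\rho'}$, which matches the claim with the empty product $\prod_{i=1}^{0} n_i = 1$.

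For the inductive step, suppose the asserted bound holds at layer $\ell$. Applying the chain rule to $f_q^{(\ell+1)} = \rho\bigl(\sum_{j=1}^{n_\ell} a_{qj}^{(\ell+1)} f_j^{(\ell)} + b_q^{(\ell+1)}\bigr)$ gives
\begin{equation*}
\partial_{x_p} f_q^{(\ell+1)} = \rho'(\cdot)\,\sum_{j=1}^{n_\ell} a_{qj}^{(\ell+1)}\, \partial_{x_p} f_j^{(\ell)},
\end{equation*}
so taking absolute values, using $|\rho'|\le B_{\rho'}$, $|a_{qj}^{(\ell+1)}|\le B_\theta$, and the inductive hypothesis on each $|\partial_{x_p} f_j^{(\ell)}|$ produces
\begin{equation*}
|\partial_{x_p} f_q^{(\ell+1)}| \le B_{\rho'}\cdot n_\ell\cdot B_\theta\cdot \Bigl(\prod_{i=1}^{\ell-1} n_i\Bigr)(B_\theta B_{\rho'})^{\ell} = \Bigl(\prod_{i=1}^{\ell} n_i\Bigr)(B_\theta B_{\rho'})^{\ell+1},
\end{equation*}
which is the desired bound at layer $\ell+1$. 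This closes the induction for $\ell = 1,\ldots,\mathcal{D}-1$.

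Finally, the output layer must be handled separately because no activation is applied: from $f = A_\mathcal{D} f_{\mathcal{D}-1} + b_\mathcal{D}$ we get $\partial_{x_p} f = \sum_{j=1}^{n_{\mathcal{D}-1}} a_{j}^{(\mathcal{D})}\,\partial_{x_p} f_j^{(\mathcal{D}-1)}$, and plugging in the layer-$(\mathcal{D}-1)$ bound just established gives $|\partial_{x_p} f| \le n_{\mathcal{D}-1} B_\theta \cdot \prod_{i=1}^{\mathcal{D}-2} n_i \cdot (B_\theta B_{\rho'})^{\mathcal{D}-1} = \prod_{i=1}^{\mathcal{D}-1} n_i\,B_\theta^{\mathcal{D}} B_{\rho'}^{\mathcal{D}-1}$, matching the second inequality. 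There is no real obstacle here beyond bookkeeping of the constants; the proof is a routine layer-by-layer induction, the only subtlety being that the last layer contributes a factor $B_\theta$ without the accompanying $B_{\rho'}$, which accounts for the asymmetric exponents in the final display.
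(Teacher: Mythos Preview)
Your proof is correct and follows essentially the same approach as the paper: both apply the chain rule to the layer recursion and iterate (the paper unrolls the recurrence directly rather than phrasing it as a formal induction, but the computation is identical). Your observation that the output layer lacks an activation, explaining the asymmetric exponents $B_\theta^{\mathcal{D}} B_{\rho'}^{\mathcal{D}-1}$, is exactly the point the paper leaves implicit with ``can be derived similarly.''
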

\begin{proof}
For $\ell=1,2,\cdots,\mathcal{D}-1$,
\begin{align*}
	\left|\partial_{x_p}f_q^{(\ell)}\right|
	&=\left|\sum_{j=1}^{n_{\ell-1}}a_{qj}^{(\ell)}\partial_{x_p}f_j^{(\ell-1)}\rho'\left(\sum_{j=1}^{n_{\ell-1}}a_{qj}^{(\ell)}f_j^{(\ell-1)}+b_q^{(\ell)}\right)\right|
	\leq B_{\theta}B_{\rho'}\sum_{j=1}^{n_{\ell-1}}\left|\partial_{x_p}f_j^{(\ell-1)}\right|\\
	&\leq\left(B_{\theta}B_{\rho'}\right)^2\sum_{k=1}^{n_{\ell-1}}\sum_{j=1}^{n_{\ell-2}}\left|\partial_{x_p}f_j^{(\ell-2)}\right|
	=n_{\ell-1}\left(B_{\theta}B_{\rho'}\right)^2\sum_{j=1}^{n_{\ell-2}}\left|\partial_{x_p}f_j^{(\ell-2)}\right|\\
	&\leq\cdots\leq\left(\prod_{i=2}^{\ell-1}n_i\right)\left(B_{\theta}B_{\rho'}\right)^{\ell-1}\sum_{j=1}^{n_{1}}\left|\partial_{x_p}f_j^{(1)}\right| \\
	&\leq\left(\prod_{i=2}^{\ell-1}n_i\right)\left(B_{\theta}B_{\rho'}\right)^{\ell-1}\sum_{j=1}^{n_{1}}B_{\theta}B_{\rho'}=\left(\prod_{i=1}^{\ell-1}n_i\right)\left(B_{\theta}B_{\rho'}\right)^{\ell}.
\end{align*}
The bound for $\left|\partial_{x_p}f\right|$ can be derived similarly.
\end{proof}

\begin{Lemma} \label{Df Lip}
Let $\mathcal{D},\mathfrak{n}_{\mathcal{D}},n_i\in\mathbb{N}^+$, $n_\mathcal{D}=1$, $B_{\theta}\ge 1$ and $\rho$ be a function such that $\rho,\rho'$ are bounded by $B_{\rho}, B_{\rho'}\le 1$ and have Lipschitz constants $L_{\rho}, L_{\rho'}\leq 1$, respectively. Assume that the parameterized function class
$
\mathcal{P}\subset\mathcal{N}_{\rho}\left(\mathcal{D}, \mathfrak{n}_{\mathcal{D}}, B_{\theta}\right)
$. Then, for any $f(x;\theta)\in\mathcal{P}$, $p=1,\cdots,d$,  $\partial_{x_p}f(x;\theta)$ is $\sqrt{\mathfrak{n}_{\mathcal{D}}}(\mathcal{D}+1)B_{\theta}^{2\mathcal{D}}\left(\prod_{i=1}^{\mathcal{D}-1}n_i\right)^2$-Lipschitz continuous with respect to variable $\theta$, i.e.,
\begin{equation*}
	\left|\partial_{x_p}f(x;\theta)-\partial_{x_p}f(x;\widetilde{\theta})\right|
	\leq \sqrt{\mathfrak{n}_{\mathcal{D}}}(\mathcal{D}+1)B_{\theta}^{2\mathcal{D}}\left(\prod_{i=1}^{\mathcal{D}-1}n_i\right)^2\left\|\theta-\widetilde{\theta}\right\|_2,\quad \forall x\in\Omega.
\end{equation*}
\end{Lemma}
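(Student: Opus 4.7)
The plan is to mirror the inductive structure of Lemma \ref{f Lip}, but now propagating a Lipschitz estimate for the \emph{partial derivative} $\partial_{x_p} f_q^{(\ell)}$ with respect to the parameters $\theta$. The key recursion is
\begin{equation*}
\partial_{x_p}f_q^{(\ell)} \;=\; \rho'\!\Bigl(\sum_{j=1}^{n_{\ell-1}} a_{qj}^{(\ell)} f_j^{(\ell-1)} + b_q^{(\ell)}\Bigr)\sum_{j=1}^{n_{\ell-1}} a_{qj}^{(\ell)}\partial_{x_p}f_j^{(\ell-1)},
\end{equation*}
so the difference $\partial_{x_p}f_q^{(\ell)} - \partial_{x_p}\widetilde{f}_q^{(\ell)}$ naturally splits, by adding and subtracting, into three pieces: (i) the change in $\rho'$ applied at two nearby pre-activations, (ii) the change in the weights $a_{qj}^{(\ell)}$, and (iii) the change in the inner derivatives $\partial_{x_p}f_j^{(\ell-1)}$. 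Each of these pieces is bounded using already-established ingredients.

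For piece (i) I would control $|\rho'(z)-\rho'(\widetilde z)|\le L_{\rho'}|z-\widetilde z|\le |z-\widetilde z|$ by the pre-activation argument used in the proof of Lemma \ref{f Lip}, which gives a factor proportional to $B_\theta^{\ell-1}\bigl(\prod_{i=1}^{\ell-1}n_i\bigr)\sum_j|\theta_j-\widetilde\theta_j|$; multiplying by the size bound $|\sum_j a_{qj}^{(\ell)}\partial_{x_p}f_j^{(\ell-1)}|\le B_\theta \bigl(\prod_{i=1}^{\ell-1}n_i\bigr)(B_\theta B_{\rho'})^{\ell-1}$ from Lemma \ref{f derivative bound}. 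For piece (ii) I would use $|\rho'|\le B_{\rho'}\le1$, the derivative bound from Lemma \ref{f derivative bound} for $|\partial_{x_p}f_j^{(\ell-1)}|$, and simply pick up $\sum_j|a_{qj}^{(\ell)}-\widetilde a_{qj}^{(\ell)}|\le \sum_{j}|\theta_j-\widetilde\theta_j|$. For piece (iii) I would bound by $B_{\rho'}B_\theta \sum_j |\partial_{x_p}f_j^{(\ell-1)} - \partial_{x_p}\widetilde f_j^{(\ell-1)}|$, opening the door to induction.

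Collecting the three pieces yields a recursion of the form
\begin{equation*}
\bigl|\partial_{x_p}f_q^{(\ell)}-\partial_{x_p}\widetilde f_q^{(\ell)}\bigr| \;\le\; B_\theta \sum_{j=1}^{n_{\ell-1}} \bigl|\partial_{x_p}f_j^{(\ell-1)}-\partial_{x_p}\widetilde f_j^{(\ell-1)}\bigr| \;+\; C_\ell \sum_{j=1}^{\mathfrak{n}_\ell}|\theta_j - \widetilde\theta_j|,
\end{equation*}
with $C_\ell = O\!\bigl(B_\theta^{2\ell-1}(\prod_{i=1}^{\ell-1}n_i)^2\bigr)$. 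The base case $\ell=1$ is immediate since $\partial_{x_p}f_q^{(1)} = a_{q,p}^{(1)}\rho'(\cdot)$ is a product of a bounded quantity and a Lipschitz one. Unrolling the recursion geometrically across $\ell=1,\dots,\mathcal{D}$ introduces one factor of $B_\theta\prod n_i$ per layer for the propagation and picks up $\mathcal{D}$-many contributions from the $C_\ell$ terms (each dominated by the top-layer constant), producing an overall prefactor of $(\mathcal{D}+1)B_\theta^{2\mathcal{D}}\bigl(\prod_{i=1}^{\mathcal{D}-1}n_i\bigr)^2$ in front of $\sum_{j=1}^{\mathfrak{n}_\mathcal{D}}|\theta_j-\widetilde\theta_j|$. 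A final application of Cauchy--Schwarz converts the $\ell^1$ parameter-norm into the $\ell^2$ norm and produces the claimed $\sqrt{\mathfrak{n}_{\mathcal{D}}}$ factor.

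The main obstacle is accounting bookkeeping rather than insight: one must keep the three splitting terms consistent through the induction and absorb the three types of growth ($B_\theta^{2\ell}$ from composing derivatives, $(\prod n_i)^2$ from both the forward Lipschitz estimate and the derivative-size bound, and the additive factor $\mathcal{D}+1$ from summing $\mathcal{D}$ layer contributions) into a single clean constant. With $B_\rho,B_{\rho'},L_\rho,L_{\rho'}\le1$ and $B_\theta\ge1$ as assumed, all cross-terms are dominated by the stated bound, so the induction closes.
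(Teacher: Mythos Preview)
Your proposal is correct and follows essentially the same approach as the paper's proof: the same three-way splitting of $\partial_{x_p}f_q^{(\ell)}-\partial_{x_p}\widetilde f_q^{(\ell)}$ into the $\rho'$-variation, the weight-variation, and the inner-derivative recursion, invoking Lemmas~\ref{f Lip} and~\ref{f derivative bound} to bound the non-recursive pieces, then closing by induction on $\ell$ and a final Cauchy--Schwarz to pass from the $\ell^1$ to the $\ell^2$ parameter norm. The paper's inductive hypothesis is precisely $|\partial_{x_p}f_q^{(\ell)}-\partial_{x_p}\widetilde f_q^{(\ell)}|\le (\ell+1)B_\theta^{2\ell}\bigl(\prod_{i=1}^{\ell-1}n_i\bigr)^2\sum_k|\theta_k-\widetilde\theta_k|$, matching your unrolled recursion.
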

\begin{proof}
For $\ell=1$,
\begin{align*}
	&\left|\partial_{x_p}f_q^{(1)}-\partial_{x_p}\widetilde{f}_q^{(1)}\right| \\
	=&\left|a_{qp}^{(1)}\rho'\left(\sum_{j=1}^{n_{0}}a_{qj}^{(1)}x_j+b_q^{(1)}\right)-\widetilde{a}_{qp}^{(1)}\rho'\left(\sum_{j=1}^{n_{0}}\widetilde{a}_{qj}^{(1)}x_j+\widetilde{b}_q^{(1)}\right)\right|\\
	\leq&\left|a_{qp}^{(1)}-\widetilde{a}_{qp}^{(1)}\right|\left|\rho'\left(\sum_{j=1}^{n_{0}}a_{qj}^{(1)}x_j+b_q^{(1)}\right)\right|
	+\left|\widetilde{a}_{qp}^{(1)}\right|\left|\rho'\left(\sum_{j=1}^{n_{0}}a_{qj}^{(1)}x_j+b_q^{(1)}\right)-\rho'\left(\sum_{j=1}^{n_{0}}\widetilde{a}_{qj}^{(1)}x_j+\widetilde{b}_q^{(1)}\right)\right|\\
	\leq& B_{\rho'}\left|a_{qp}^{(1)}-\widetilde{a}_{qp}^{(1)}\right|+B_{\theta}L_{\rho'}\sum_{j=1}^{n_{0}}\left|a_{qj}^{(1)}-\widetilde{a}_{qj}^{(1)}\right|+B_{\theta}L_{\rho'}\left|{b}_q^{(1)}-\widetilde{b}_q^{(1)}\right|\leq 2B_{\theta}\sum_{k=1}^{\mathfrak{n}_{1}}\left|\theta_k-\widetilde{\theta}_k\right|
\end{align*}
For $\ell\geq 2$, we establish the Recurrence relation:
\begin{align*}
	&\left|\partial_{x_p}f_q^{(\ell)}-\partial_{x_p}\widetilde{f}_q^{(\ell)}\right|\\
	&\leq\sum_{j=1}^{n_{\ell-1}}\left|a_{qj}^{(\ell)}\right|\left|\partial_{x_p}f_j^{(\ell-1)}\right|\left|\rho'\left(\sum_{j=1}^{n_{\ell-1}}a_{qj}^{(\ell)}f_j^{(\ell-1)}+b_q^{(\ell)}\right)-\rho'\left(\sum_{j=1}^{n_{\ell-1}}\widetilde{a}_{qj}^{(\ell)}\widetilde{f}_j^{(\ell-1)}+\widetilde{b}_q^{(\ell)}\right)\right|\\
	&\quad+\sum_{j=1}^{n_{\ell-1}}\left|a_{qj}^{(\ell)}\partial_{x_p}f_j^{(\ell-1)}-\widetilde{a}_{qj}^{(\ell)}\partial_{x_p}\widetilde{f}_j^{(\ell-1)}\right|\left|\rho'\left(\sum_{j=1}^{n_{\ell-1}}\widetilde{a}_{qj}^{(\ell)}\widetilde{f}_j^{(\ell-1)}+\widetilde{b}_q^{(\ell)}\right)\right|\\
	&\leq B_{\theta}L_{\rho'}\sum_{j=1}^{n_{\ell-1}}\left|\partial_{x_p}f_j^{(\ell-1)}\right|\left(\sum_{j=1}^{n_{\ell-1}}\left|a_{qj}^{(\ell)}f_j^{(\ell-1)}-\widetilde{a}_{qj}^{(\ell)}\widetilde{f}_j^{(\ell-1)}\right|+\left|b_q^{(\ell)}-\widetilde{b}_q^{(\ell)}\right|\right)\\
	&\quad+B_{\rho'}\sum_{j=1}^{n_{\ell-1}}\left|a_{qj}^{(\ell)}\partial_{x_p}f_j^{(\ell-1)}-\widetilde{a}_{qj}^{(\ell)}\partial_{x_p}\widetilde{f}_j^{(\ell-1)}\right|\\
	&\leq B_{\theta}L_{\rho'}\sum_{j=1}^{n_{\ell-1}}\left|\partial_{x_p}f_j^{(\ell-1)}\right|\left(B_{\rho}\sum_{j=1}^{n_{\ell-1}}\left|a_{qj}^{(\ell)}-\widetilde{a}_{qj}^{(\ell)}\right|+B_{\theta}\sum_{j=1}^{n_{\ell-1}}\left|f_j^{(\ell-1)}-\widetilde{f}_j^{(\ell-1)}\right|+\left|b_q^{(\ell)}-\widetilde{b}_q^{(\ell)}\right|\right)\\
	&\quad+B_{\rho'}B_{\theta}\sum_{j=1}^{n_{\ell-1}}\left|\partial_{x_p}f_j^{(\ell-1)}-\partial_{x_p}\widetilde{f}_j^{(\ell-1)}\right|+B_{\rho'}\sum_{j=1}^{n_{\ell-1}}\left|a_{qj}^{(\ell)}-\widetilde{a}_{qj}^{(\ell)}\right|\left|\partial_{x_p}\widetilde{f}_j^{(\ell-1)}\right|\\
	&\leq B_{\theta}\sum_{j=1}^{n_{\ell-1}}\left|\partial_{x_p}f_j^{(\ell-1)}\right|\left(\sum_{j=1}^{n_{\ell-1}}\left|a_{qj}^{(\ell)}-\widetilde{a}_{qj}^{(\ell)}\right|+B_{\theta}\sum_{j=1}^{n_{\ell-1}}\left|f_j^{(\ell-1)}-\widetilde{f}_j^{(\ell-1)}\right|+\left|b_q^{(\ell)}-\widetilde{b}_q^{(\ell)}\right|\right)\\
	&\quad+B_{\theta}\sum_{j=1}^{n_{\ell-1}}\left|\partial_{x_p}f_j^{(\ell-1)}-\partial_{x_p}\widetilde{f}_j^{(\ell-1)}\right|+\sum_{j=1}^{n_{\ell-1}}\left|a_{qj}^{(\ell)}-\widetilde{a}_{qj}^{(\ell)}\right|\left|\partial_{x_p}\widetilde{f}_j^{(\ell-1)}\right|\\
	&\leq B_{\theta}\left(\prod_{i=1}^{\ell-1}n_i\right)B_{\theta}^{\ell}\left(\sum_{j=1}^{n_{\ell-1}}\left|a_{qj}^{(\ell)}-\widetilde{a}_{qj}^{(\ell)}\right|+B_{\theta}\sum_{j=1}^{n_{\ell-1}}
	\left(\prod_{i=1}^{\ell-2}n_i\right) B_{\theta}^{\ell-2}\sum_{k=1}^{\mathfrak{n}_{\ell-1}}\left|\theta_k-\widetilde{\theta}_k\right|+\left|b_q^{(\ell)}-\widetilde{b}_q^{(\ell)}\right|\right)\\
	&\quad+B_{\theta}\sum_{j=1}^{n_{\ell-1}}\left|\partial_{x_p}f_j^{(\ell-1)}-\partial_{x_p}\widetilde{f}_j^{(\ell-1)}\right|+\sum_{j=1}^{n_{\ell-1}}\left|a_{qj}^{(\ell)}-\widetilde{a}_{qj}^{(\ell)}\right|\left(\prod_{i=1}^{\ell-2}n_i\right)B_{\theta}^{\ell-1}\\
	&\leq B_{\theta}\sum_{j=1}^{n_{\ell-1}}\left|\partial_{x_p}f_j^{(\ell-1)}-\partial_{x_p}\widetilde{f}_j^{(\ell-1)}\right|
	+B_{\theta}^{2\ell}\left(\prod_{i=1}^{\ell-1}n_i\right)^2\sum_{k=1}^{\mathfrak{n}_{\ell}}\left|\theta_k-\widetilde{\theta}_k\right|
\end{align*}
For $\ell=2$,
\begin{align*}
	\left|\partial_{x_p}f_q^{(2)}-\partial_{x_p}\widetilde{f}_q^{(2)}\right|&\leq B_{\theta}\sum_{j=1}^{n_{1}}\left|\partial_{x_p}f_j^{(1)}-\partial_{x_p}\widetilde{f}_j^{(1)}\right|
	+B_{\theta}^{4}n_1^2\sum_{k=1}^{\mathfrak{n}_{2}}\left|\theta_k-\widetilde{\theta}_k\right|\\
	&\leq2B_{\theta}^2n_1\sum_{k=1}^{\mathfrak{n}_{1}}\left|\theta_k-\widetilde{\theta}_k\right|+B_{\theta}^{4}n_1^2\sum_{k=1}^{\mathfrak{n}_{2}}\left|\theta_k-\widetilde{\theta}_k\right|
	\leq 3B_{\theta}^{4}n_1^2\sum_{k=1}^{\mathfrak{n}_{2}}\left|\theta_k-\widetilde{\theta}_k\right|
\end{align*}
Assuming that for $\ell\geq2$,
\begin{equation*}
	\left|\partial_{x_p}f_q^{(\ell)}-\partial_{x_p}\widetilde{f}_q^{(\ell)}\right|\leq (\ell+1) B_{\theta}^{2\ell}\left(\prod_{i=1}^{\ell-1}n_i\right)^2\sum_{k=1}^{\mathfrak{n}_{\ell}}\left|\theta_k-\widetilde{\theta}_k\right|
\end{equation*}
we have
\begin{align*}
	&\left|\partial_{x_p}f_q^{(\ell+1)}-\partial_{x_p}\widetilde{f}_q^{(\ell+1)}\right| \\
	\leq& B_{\theta}\sum_{j=1}^{n_{\ell}}\left|\partial_{x_p}f_j^{(\ell)}-\partial_{x_p}\widetilde{f}_j^{(\ell)}\right|
	+B_{\theta}^{2\ell+2}\left(\prod_{i=1}^{\ell}n_i\right)^2\sum_{k=1}^{\mathfrak{n}_{\ell+1}}\left|\theta_k-\widetilde{\theta}_k\right|\\
	\leq& B_{\theta}\sum_{j=1}^{n_{\ell}}(\ell+1) B_{\theta}^{2\ell}\left(\prod_{i=1}^{\ell-1}n_i\right)^2\sum_{k=1}^{\mathfrak{n}_{\ell}}\left|\theta_k-\widetilde{\theta}_k\right|
	+B_{\theta}^{2\ell+2}\left(\prod_{i=1}^{\ell}n_i\right)^2\sum_{k=1}^{\mathfrak{n}_{\ell+1}}\left|\theta_k-\widetilde{\theta}_k\right|\\
	\leq& (\ell+2)B_{\theta}^{2\ell+2}\left(\prod_{i=1}^{\ell}n_i\right)^2\sum_{k=1}^{\mathfrak{n}_{\ell+1}}\left|\theta_k-\widetilde{\theta}_k\right|
\end{align*}
Hence by by induction and H$\mathrm{\ddot{o}}$lder inequality we conclude that
\begin{equation*}
\left|\partial_{x_p}f-\partial_{x_p}\widetilde{f}\right|
\leq (\mathcal{D}+1)B_{\theta}^{2\mathcal{D}}\left(\prod_{i=1}^{\mathcal{D}-1}n_i\right)^2\sum_{k=1}^{\mathfrak{n}_{\mathcal{D}}}\left|\theta_k-\widetilde{\theta}_k\right|
\leq \sqrt{\mathfrak{n}_{\mathcal{D}}}(\mathcal{D}+1)B_{\theta}^{2\mathcal{D}}\left(\prod_{i=1}^{\mathcal{D}-1}n_i\right)^2\left\|\theta-\widetilde{\theta}\right\|_2
\end{equation*}
\end{proof}

\begin{Lemma} \label{Lip of Fi}
Let $\mathcal{D},\mathfrak{n}_{\mathcal{D}},n_i\in\mathbb{N}^+$, $n_\mathcal{D}=1$, $B_{\theta}\ge 1$ and $\rho$ be a function such that $\rho,\rho'$ are bounded by $B_{\rho}, B_{\rho'}\le 1$ and have Lipschitz constants $L_{\rho}, L_{\rho'}\leq 1$, respectively. Assume that the parameterized function class
$\mathcal{P}\subset\mathcal{N}_{\rho}\left(\mathcal{D}, \mathfrak{n}_{\mathcal{D}}, B_{\theta}\right)$. Then $\mathcal{F}_1,\mathcal{F}_2,\mathcal{F}_3,\mathcal{F}_5$ are parameterized function classes with respect to parameter set $\Theta\times\Theta$ and $\mathcal{F}_4,\mathcal{F}_6$ are parameterized function classes with respect to parameter set $\Theta$ with $\Theta:=\{\theta\in\mathbb{R}^{\mathfrak{n}_{\mathcal{D}}}:\|\theta\|_2\leq B_{\theta}\}$. In addition, for any $f_i(x;\theta),f_i(x;\widetilde{\theta})\in\mathcal{F}_i$, $i=1,\cdots,6$, we have
\begin{align*}
|f_i(x;\theta)| &\le B_i, \quad \forall x\in\Omega,\\
|f_i(x;\theta)-f_i(x;\widetilde{\theta})| &\leq L_i\|\theta-\widetilde{\theta}\|_2,\quad \forall x\in\Omega,
\end{align*}
with
\begin{align*}
B_1&=C(coe)d^2B_{\theta}^{2\mathcal{D}}\left(\prod_{i=1}^{\mathcal{D}-1}n_i\right)^2,
&&B_2=C(coe)d(n_{\mathcal{D}-1}+1)B_{\theta}^{\mathcal{D}+1}\left(\prod_{i=1}^{\mathcal{D}-1}n_i\right),\\
B_3&=C(coe)(n_{\mathcal{D}-1}+1)^2B_{\theta}^{2},
&&B_4=C(coe)(n_{\mathcal{D}-1}+1)B_{\theta},\\
B_5&=\frac{\alpha}{2}(n_{\mathcal{D}-1}+1)^2B_{\theta}^{2},
&&B_6=C(coe)(n_{\mathcal{D}-1}+1)B_{\theta}
\end{align*}
and
\begin{align*}
L_1&=C(coe)d^2\sqrt{2\mathfrak{n}_{\mathcal{D}}}(\mathcal{D}+1)B_{\theta}^{3\mathcal{D}}\left(\prod_{i=1}^{\mathcal{D}-1}n_i\right)^3,\\
L_2&=C(coe)d\sqrt{2\mathfrak{n}_{\mathcal{D}}}(\mathcal{D}+1)(n_{\mathcal{D}-1}+1)B_{\theta}^{2\mathcal{D}+1}\left(\prod_{i=1}^{\mathcal{D}-1}n_i\right)^2,\\
L_3&=C(coe)\sqrt{2\mathfrak{n}_{\mathcal{D}}}(n_{\mathcal{D}-1}+1)B_{\theta}^{\mathcal{D}}\left(\prod_{i=1}^{\mathcal{D}-1}n_i\right),\\
L_4&=C(coe)\sqrt{\mathfrak{n}_{\mathcal{D}}}B_{\theta}^{\mathcal{D}-1}\left(\prod_{i=1}^{\mathcal{D}-1}n_i\right),\\
L_5&=\frac{\alpha}{2}\sqrt{2\mathfrak{n}_{\mathcal{D}}}(n_{\mathcal{D}-1}+1)B_{\theta}^{\mathcal{D}}\left(\prod_{i=1}^{\mathcal{D}-1}n_i\right),\\
L_6&=C(coe)\sqrt{\mathfrak{n}_{\mathcal{D}}}B_{\theta}^{\mathcal{D}-1}\left(\prod_{i=1}^{\mathcal{D}-1}n_i\right)
\end{align*}
\end{Lemma}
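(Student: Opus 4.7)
The plan is to verify the two conclusions (the pointwise bound and the Lipschitz bound) separately for each of the six classes, treating Lemmas \ref{f Lip}, \ref{f derivative bound} and \ref{Df Lip} as black boxes. The parametrization statement itself is immediate: elements of $\mathcal{P}$ are encoded by a single $\theta\in\Theta$, so the bilinear classes $\mathcal{F}_1,\mathcal{F}_2,\mathcal{F}_3,\mathcal{F}_5$ sit naturally over $\Theta\times\Theta$ via $(\theta_u,\theta_v)$, while $\mathcal{F}_4,\mathcal{F}_6$ depend only on $v$ and are parametrized by $\Theta$.

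For the pointwise bounds $B_i$ I first record two building blocks. Since the output layer of any $f\in\mathcal{P}$ is affine with weights bounded by $B_\theta$ and the activation is bounded by $B_\rho\le 1$, one has the crude estimate $|f|\le (n_{\mathcal{D}-1}+1)B_\theta$; combined with $|\partial_{x_p}f|\le\bigl(\prod_{i=1}^{\mathcal{D}-1}n_i\bigr)B_\theta^{\mathcal{D}}$ from Lemma \ref{f derivative bound} (using $B_{\rho'}\le 1$), each $B_i$ follows by estimating the relevant pointwise product and (when applicable) a sum over $d$ or $d^2$ indices, absorbing the $L^\infty$ norms of $a_{ij},b_i,c,f,g$ into $C(coe)$. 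For the boundary classes $\mathcal{F}_5,\mathcal{F}_6$ the same pointwise bounds apply because $T_0u$ coincides with the continuous restriction of $u\in C^1(\bar\Omega)$ to $\partial\Omega$.

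For the Lipschitz constants $L_i$, the workhorse is the product-rule inequality
\begin{equation*}
|a(\theta_u)b(\theta_v)-a(\tilde\theta_u)b(\tilde\theta_v)|\le |a(\theta_u)|\,|b(\theta_v)-b(\tilde\theta_v)|+|b(\tilde\theta_v)|\,|a(\theta_u)-a(\tilde\theta_u)|,
\end{equation*}
applied with $a,b\in\{f,\partial_{x_p}f\}$ as dictated by the class. Substituting the pointwise bounds just derived together with Lemma \ref{f Lip} (Lipschitzness of $f$ in $\theta$) and Lemma \ref{Df Lip} (Lipschitzness of $\partial_{x_p}f$ in $\theta$) produces two summands, one controlled by $\|\theta_u-\tilde\theta_u\|_2$ and the other by $\|\theta_v-\tilde\theta_v\|_2$; combining them through $\|\theta_u-\tilde\theta_u\|_2+\|\theta_v-\tilde\theta_v\|_2\le\sqrt{2}\,\|(\theta_u,\theta_v)-(\tilde\theta_u,\tilde\theta_v)\|_2$ explains the $\sqrt{2\mathfrak{n}_{\mathcal{D}}}$ factor appearing in $L_1,L_2,L_3,L_5$. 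The singly-parametrized classes $\mathcal{F}_4,\mathcal{F}_6$ are linear in $v$, so a single invocation of Lemma \ref{f Lip} suffices and no $\sqrt{2}$ factor arises, giving $L_4,L_6$ directly.

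The one real obstacle is the bookkeeping of exponents. In each class the dominant contribution comes from whichever factor contains a spatial derivative, which carries $(\mathcal{D}+1)B_\theta^{2\mathcal{D}}\bigl(\prod n_i\bigr)^2$ from Lemma \ref{Df Lip}, while the companion factor contributes either $(n_{\mathcal{D}-1}+1)B_\theta$ (when it is $|f|$) or $\bigl(\prod n_i\bigr)B_\theta^{\mathcal{D}}$ (when it is $|\partial_{x_p}f|$). Collating these class by class and multiplying by the coefficient bounds absorbed in $C(coe)$, together with the combinatorial factors $d$ or $d^2$ from the sums in $\mathcal{F}_1,\mathcal{F}_2$ and the explicit $\alpha/2$ in $\mathcal{F}_5$, reproduces exactly the stated values of $L_1,\dots,L_6$.
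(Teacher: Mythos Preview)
Your proposal is correct and follows exactly the approach the paper uses: the paper's own proof consists of the single sentence ``A direct result from Lemmas~\ref{f Lip}, \ref{f derivative bound}, \ref{Df Lip}, and standard calculation,'' and your write-up is precisely that standard calculation spelled out, combining the pointwise bound $|f|\le(n_{\mathcal D-1}+1)B_\theta$, the derivative bound from Lemma~\ref{f derivative bound}, the Lipschitz bounds from Lemmas~\ref{f Lip} and~\ref{Df Lip}, the product-rule inequality, and the $\sqrt{2}$ factor from the joint parametrization.
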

\begin{proof}
A direct result from Lemmas $\ref{f Lip}$, $\ref{f derivative bound}$, $\ref{Df Lip}$, and standard calculation.
\end{proof}

Now we state our main result with respect to statistical error $\mathcal{E}_{sta}$.
\begin{Theorem} \label{sta error}
Let $\mathcal{D},\mathfrak{n}_{\mathcal{D}},n_i\in\mathbb{N}^+$, $n_\mathcal{D}=1$, $B_{\theta}\ge 1$ and $\rho$ be a function such that $\rho,\rho'$ are bounded by $B_{\rho}, B_{\rho'}\le 1$ and have Lipschitz constants $L_{\rho}, L_{\rho'}\leq 1$, respectively. Assume that the parameterized function class
$
	\mathcal{P}\subset\mathcal{N}_{\rho}\left(\mathcal{D}, \mathfrak{n}_{\mathcal{D}}, B_{\theta}\right)
$. Then, if $N=M$, we have for $\mathcal{E}_{sta}$ defined in $(\ref{sta})$,
\begin{equation*}
\mathbb{E}_{\{{X_i}\}_{i=1}^{N},\{{Y_j}\}_{j=1}^{M}}\mathcal{E}_{sta}
\leq \frac{C(\Omega,coe,\alpha)}{\beta}\frac{d^3\mathcal{D}^{\frac{1}{2}}\mathfrak{n}_{\mathcal{D}}^{\frac{7}{2}\mathcal{D}-\frac{3}{2}}B_{\theta}^{\frac{7}{2}\mathcal{D}+\frac{1}{2}}}{N^{\frac{1}{4}}}
\end{equation*}
\end{Theorem}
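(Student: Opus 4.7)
The plan is to chain together the machinery built up in Section~\ref{sec:sta}: starting from the decomposition $\mathcal{E}_{sta}\leq\sum_{k=1}^6 I_k$ given by Lemma~\ref{triangle inequality}, then controlling each $\mathbb{E}I_k$ by the Rademacher complexity $\mathfrak{R}_N(\mathcal{F}_k)$ via Lemma~\ref{symmetrization}, and finally bounding each $\mathfrak{R}_N(\mathcal{F}_k)$ through a Dudley-style covering number argument using Lemmas~\ref{chaining}, \ref{covering number Lipshcitz parameterization}, and \ref{covering number Euclidean space}, with Lipschitz constants and sup-norm bounds supplied by Lemma~\ref{Lip of Fi}.

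The key computational step is the covering number bound. For each $i\in\{1,2,3,5\}$, the class $\mathcal{F}_i$ is parameterized by $\Theta\times\Theta\subset\mathbb{R}^{2\mathfrak{n}_\mathcal{D}}$ with $\|\cdot\|_2\leq\sqrt{2}B_\theta\sqrt{\mathfrak{n}_\mathcal{D}}$, and the parameterization is $L_i$-Lipschitz; for $i\in\{4,6\}$ the parameter space is just $\Theta\subset\mathbb{R}^{\mathfrak{n}_\mathcal{D}}$. Combining Lemmas~\ref{covering number Euclidean space} and~\ref{covering number Lipshcitz parameterization} yields
\begin{equation*}
\log\mathcal{C}(\epsilon,\mathcal{F}_i,\|\cdot\|_\infty)\;\leq\;2\mathfrak{n}_\mathcal{D}\log\!\left(\frac{C\sqrt{\mathfrak{n}_\mathcal{D}}\,B_\theta L_i}{\epsilon}\right).
\end{equation*}
Inserting this into Lemma~\ref{chaining} and applying the crude bound $\sqrt{\log(K/\epsilon)}\leq\sqrt{K/\epsilon}$ on the integrand turns the Dudley integral into an elementary $\int\epsilon^{-1/2}d\epsilon$, producing
\begin{equation*}
\mathfrak{R}_N(\mathcal{F}_i)\;\leq\;4\delta+\frac{C\,\mathfrak{n}_\mathcal{D}\sqrt{B_\theta L_i B_i}}{\sqrt{N}}.
\end{equation*}
I will then select $\delta$ of order $N^{-1/4}$ (rather than optimizing), which matches the claimed $N^{-1/4}$ rate and aggregates cleanly across the six terms.

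The final step is bookkeeping: substitute the explicit $B_i,L_i$ from Lemma~\ref{Lip of Fi}, use the crude layer-width bound $n_\ell\leq\mathfrak{n}_\mathcal{D}$ to collapse $\prod_{i=1}^{\mathcal{D}-1}n_i$ into a power of $\mathfrak{n}_\mathcal{D}$, and identify the dominant term. The term coming from $\mathcal{F}_1$ (which carries the worst $B_\theta$ and $\mathfrak{n}_\mathcal{D}$ exponents because it involves products of two derivatives of networks, hence both $B_\theta^{2\mathcal{D}}$ and the cubed width product in $L_1$) will control the sum and give exactly the exponents $\mathfrak{n}_\mathcal{D}^{7\mathcal{D}/2-3/2}$ and $B_\theta^{7\mathcal{D}/2+1/2}$ in the statement, while the boundary integrals $\mathcal{F}_5,\mathcal{F}_6$ contribute the $1/\beta$ prefactor via Lemma~\ref{symmetrization}.

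The main obstacle I anticipate is not conceptual but purely the careful tracking of the exponents of $\mathcal{D}$, $\mathfrak{n}_\mathcal{D}$, and $B_\theta$ through the product $B_\theta L_i B_i$ and the subsequent square root; a small miscount anywhere (e.g., forgetting the extra $\sqrt{\mathfrak{n}_\mathcal{D}}$ from the diameter of $\Theta\times\Theta$, or the factor $(\mathcal{D}+1)$ from Lemma~\ref{Df Lip}) will change the stated exponents. The secondary subtlety is ensuring the crude $\sqrt{\log}\leq\sqrt{\cdot}$ step is applied only inside the integral (so that the prefactor stays polynomial) rather than after integration, since doing it in the wrong order degrades the bound unnecessarily and loses the $d^3$ aggregate factor that arises after summing the six contributions and invoking $|\Omega|,|\partial\Omega|\leq C(\Omega)$.
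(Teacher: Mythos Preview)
Your overall strategy is correct and matches the paper's: decompose via Lemma~\ref{triangle inequality}, pass to Rademacher complexities via Lemma~\ref{symmetrization}, and bound these through Lemma~\ref{chaining} combined with the covering-number estimates of Lemmas~\ref{covering number Euclidean space}, \ref{covering number Lipshcitz parameterization}, and \ref{Lip of Fi}. Where you diverge from the paper is in how you treat the Dudley integral, and this has a quantitative consequence you should be aware of before you start the bookkeeping.

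The paper does \emph{not} push $\sqrt{\log(K/\epsilon)}\le\sqrt{K/\epsilon}$ inside the integral. Instead it bounds
\[
\int_\delta^{B_i/2}\sqrt{\mathfrak{n}_\mathcal{D}\log(K/\epsilon)}\,d\epsilon
\;\le\;\frac{B_i}{2}\sqrt{\mathfrak{n}_\mathcal{D}\log(K/\delta)},
\]
takes $\delta=1/\sqrt{N}$, and only then applies $\sqrt{\log x}\le\sqrt{x}$ to the resulting factor $\sqrt{\log(K\sqrt N)}$. That last step pulls in $\sqrt{\sqrt N}=N^{1/4}$ (degrading $N^{-1/2}$ to $N^{-1/4}$) together with $\sqrt{K}\sim d\,\mathcal{D}^{1/2}\mathfrak{n}_\mathcal{D}^{(3\mathcal{D}-2)/2}B_\theta^{(3\mathcal{D}+1)/2}$, which, multiplied by $\sqrt{\mathfrak{n}_\mathcal{D}}\,B_1\sim d^2\mathfrak{n}_\mathcal{D}^{2\mathcal{D}-1/2}B_\theta^{2\mathcal{D}}$, is precisely how the exponents $d^3$, $\mathfrak{n}_\mathcal{D}^{7\mathcal{D}/2-3/2}$, and $B_\theta^{7\mathcal{D}/2+1/2}$ arise.

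Your route---applying $\sqrt{\log}\le\sqrt{\cdot}$ inside the integral and then integrating $\epsilon^{-1/2}$---yields instead a main term of order $\mathfrak{n}_\mathcal{D}^{3/4}\sqrt{B_\theta L_1 B_1}/\sqrt N$, which after substituting Lemma~\ref{Lip of Fi} gives exponents roughly $d^2$, $\mathfrak{n}_\mathcal{D}^{5\mathcal{D}/2-3/2}$, $B_\theta^{5\mathcal{D}/2+1/2}$, with rate $N^{-1/2}$. So you will \emph{not} recover the stated exponents exactly; you will get something strictly smaller. Since $d,\mathfrak{n}_\mathcal{D},B_\theta,\mathcal{D}\ge 1$ and $N^{-1/2}\le N^{-1/4}$, your bound still implies the theorem (and your $4\delta=4N^{-1/4}$ term is absorbed because the stated prefactor exceeds $1$). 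Just be prepared for the final exponents in your computation to come out sharper than claimed, rather than matching on the nose as you anticipate.
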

\begin{proof}
From Lemma $\ref{covering number Euclidean space}$, $\ref{covering number Lipshcitz parameterization}$ and $\ref{chaining}$, we have
\begin{align*}
	\mathfrak{R}_N(\mathcal{F}_i)&\leq\inf_{0<\delta<B_i/2}\left(4\delta+\frac{12}{\sqrt{N}}\int_{\delta}^{B_i/2}\sqrt{\log\mathcal{C}(\epsilon,\mathcal{F}_i,\|\cdot\|_{\infty})}d\epsilon\right)\\
	&\leq\inf_{0<\delta<B_i/2}\left(4\delta+\frac{12}{\sqrt{N}}\int_{\delta}^{B_i/2}\sqrt{\mathfrak{n}_{\mathcal{D}}\log\left(\frac{2L_iB_\theta\sqrt{\mathfrak{n}_{\mathcal{D}}}}{\epsilon}\right)}d\epsilon\right) \\
	&\leq\inf_{0<\delta<B_i/2}\left(4\delta+\frac{6\sqrt{\mathfrak{n}_{\mathcal{D}}}B_i}{\sqrt{N}}\sqrt{\log\left(\frac{2L_iB_\theta\sqrt{\mathfrak{n}_{\mathcal{D}}}}{\delta}\right)}\right).
\end{align*}
Choosing $\delta=1/\sqrt{N}<B_i/2$ and applying Lemma $\ref{Lip of Fi}$, we have for $i=1,\cdots,4$,
\begin{align}
\mathfrak{R}_N(\mathcal{F}_i)&\leq\frac{4}{\sqrt{N}}+\frac{6\sqrt{\mathfrak{n}_{\mathcal{D}}}B_i}{\sqrt{N}}\sqrt{\log\left(2L_iB_\theta\sqrt{\mathfrak{n}_{\mathcal{D}}}\sqrt{N}\right)}\nonumber\\
&\leq \frac{C(coe,\alpha)}{\sqrt{N}}\cdot d^2\sqrt{\mathfrak{n}_{\mathcal{D}}}\left(\prod_{i=1}^{\mathcal{D}-1}n_i\right)^2B_{\theta}^{2\mathcal{D}}\sqrt{\log\left(d^2\mathfrak{n}_{\mathcal{D}}(\mathcal{D}+1)B_{\theta}^{3\mathcal{D}+1}\left(\prod_{i=1}^{\mathcal{D}-1}n_i\right)^3\sqrt{N}\right)} \nonumber\\
&\leq \frac{C(coe,\alpha)}{\sqrt{N}}\cdot d^2\mathfrak{n}_{\mathcal{D}}^{2\mathcal{D}-\frac{1}{2}}B_{\theta}^{2\mathcal{D}} \sqrt{\log\left(d^2\mathcal{D}\mathfrak{n}_{\mathcal{D}}^{3\mathcal{D}-2}B_{\theta}^{3\mathcal{D}+1}\sqrt{N}\right)}\nonumber\\
&\leq\frac{C(coe,\alpha)d^3\mathcal{D}^{\frac{1}{2}}\mathfrak{n}_{\mathcal{D}}^{\frac{7}{2}\mathcal{D}-\frac{3}{2}}B_{\theta}^{\frac{7}{2}\mathcal{D}+\frac{1}{2}}}{N^{\frac{1}{4}}}\label{sta error1}
\end{align}
Similarly, for $i=5,6$,
\begin{align}
\mathfrak{R}_M(\mathcal{F}_i)\leq\frac{C(coe,\alpha)d^3\mathcal{D}^{\frac{1}{2}}\mathfrak{n}_{\mathcal{D}}^{\frac{7}{2}\mathcal{D}-\frac{3}{2}}B_{\theta}^{\frac{7}{2}\mathcal{D}+\frac{1}{2}}}{M^{\frac{1}{4}}}\label{sta error2}
\end{align}
Combining Lemma $\ref{triangle inequality}$, $\ref{symmetrization}$, $(\ref{sta error1})$ and $(\ref{sta error2})$, we obtain, if $N=M$,
\begin{equation*}
\mathbb{E}_{\{{X_i}\}_{i=1}^{N},\{{Y_j}\}_{j=1}^{M}}\mathcal{E}_{sta}
\leq \frac{C(\Omega,coe,\alpha)}{\beta}\frac{d^3\mathcal{D}^{\frac{1}{2}}\mathfrak{n}_{\mathcal{D}}^{\frac{7}{2}\mathcal{D}-\frac{3}{2}}B_{\theta}^{\frac{7}{2}\mathcal{D}+\frac{1}{2}}}{N^{\frac{1}{4}}}
\end{equation*}

\end{proof}

\section{Covergence Rate for the Galerkin Method}\label{sec:rate}

Now we state our main result.
\begin{Theorem}
Let (A1)-(A3) holds. Assume that $\mathcal{E}_{opt}=0$. Let $\rho$ be logistic function $\frac{1}{1+e^{-x}}$ or tanh function $\frac{e^x-e^{-x}}{e^x+e^{-x}}$. Let $u_{\phi_{\mathcal{A}}}$ be the solution of problem $(\ref{optimization})$ generated by a random solver $\mathcal{A}$.

(1)Let $u_R$ be the weak solution of Robin problem $(\ref{second order elliptic equation})(\ref{robin})$. Assume that $\epsilon>0$ is sufficiently small. Set
the parameterized function class
\begin{equation*}
\mathcal{P}:=\mathcal{N}_{\rho}\left(C\log(d+1),C(d,coe,\beta)\epsilon^{\frac{-d}{1-\mu}},C(d,coe,\beta) \epsilon^{\frac{-9d-8}{2-2\mu}}\right)\bigcap B_{H^1(\Omega)}(0,2)
\end{equation*}
where $\mu>0$ can be any arbitrarily small number and  $B_{H^1(\Omega)}(0,2):=\{f\in H^1(\Omega):\|f\|_{H^1(\Omega)}\leq2\}$. Set the number of samples
\begin{equation*}
N=M=C(d,\Omega, coe,\alpha,\beta)\epsilon^{-Cd\log d},
\end{equation*}
then
\begin{equation*}
\mathbb{E}_{\{{X_i}\}_{i=1}^{N},\{{Y_j}\}_{j=1}^{M}}\|u_{\phi_\mathcal{A}}-u_R\|_{H^1(\Omega)}\leq \epsilon.
\end{equation*}

(2)Let $u_D$ be the weak solution of Dirichlet problem $(\ref{second order elliptic equation})(\ref{dirichlet})$. Set $\alpha=1,g=0$. Assume that $\epsilon>0$ is sufficiently small. Set the penalty parameter $\beta=C(d,\Omega,coe)\epsilon^2$. Set the parameterized function class
\begin{equation*}
\mathcal{P}:=\mathcal{N}_{\rho}\left(C\log(d+1),C(d,coe)\epsilon^{\frac{-d}{1-\mu}},C(d,coe) \epsilon^{\frac{-9d-8}{2-2\mu}}\right)\bigcap B_{H^1(\Omega)}(0,2)
\end{equation*}
where $\mu>0$ can be any arbitrarily small number and $B_{H^1(\Omega)}(0,2):=\{f\in H^1(\Omega):\|f\|_{H^1(\Omega)}\leq2\}$. Set the number of samples
\begin{equation*}
N=M=C(d,\Omega, coe)\epsilon^{-Cd\log d},
\end{equation*}
then
\begin{equation*}
\mathbb{E}_{\{{X_i}\}_{i=1}^{N},\{{Y_j}\}_{j=1}^{M}}\|u_{\phi_\mathcal{A}}-u_D\|_{H^1(\Omega)}\leq \epsilon.
\end{equation*}
\end{Theorem}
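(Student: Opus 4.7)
The plan is to assemble the three ingredients already developed: the error decomposition of Proposition~\ref{error decomposition}, the approximation estimate of Theorem~\ref{app error}, and the statistical estimate of Theorem~\ref{sta error}. For part~(1) the decomposition gives
$$\|u_{\phi_{\mathcal{A}}}-u_R\|_{H^1(\Omega)}\le C(d,\Omega,coe)\bigl(\mathcal{E}_{app}+\mathcal{E}_{sta}+\mathcal{E}_{opt}\bigr),$$
and since $\mathcal{E}_{opt}=0$ by assumption, it suffices to choose the network hyperparameters so that $\mathbb{E}\,\mathcal{E}_{app}$ and $\mathbb{E}\,\mathcal{E}_{sta}$ are each at most a constant multiple of $\epsilon$ (after absorbing the leading factor $C(d,\Omega,coe)$ into the final constants).

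The class $\mathcal{P}$ stated in the theorem is precisely the one produced by Theorem~\ref{app error} with $\delta=C(coe)\beta^2\epsilon$, so the approximation bound yields $\mathcal{E}_{app}\le\epsilon$ directly, and the uniform bound $\mathcal{M}=2$ required in the decomposition is built into $\mathcal{P}\subset B_{H^1(\Omega)}(0,2)$. For the statistical part I substitute $\mathcal{D}=C\log(d+1)$, $\mathfrak{n}_{\mathcal{D}}=\mathcal{O}(\epsilon^{-d/(1-\mu)})$, and $B_\theta=\mathcal{O}(\epsilon^{-(9d+8)/(2-2\mu)})$ into Theorem~\ref{sta error}. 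Because $\mathcal{D}=O(\log d)$, the exponents $(7\mathcal{D}-3)/2$ on $\mathfrak{n}_{\mathcal{D}}$ and $(7\mathcal{D}+1)/2$ on $B_\theta$ are themselves $O(\log d)$, and after the substitution the numerator scales as $\epsilon^{-Cd\log d}$ for a suitable constant $C$. Choosing $N=M=C(d,\Omega,coe,\alpha,\beta)\epsilon^{-Cd\log d}$ with the exponent enlarged so that $N^{1/4}$ dominates this growth (together with the extra $\epsilon^{-1}$ needed to beat $\mathbb{E}\,\mathcal{E}_{sta}$ against $\epsilon$) then delivers the desired sample bound.

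For part~(2), the second form of the decomposition contributes an additional term $\mathcal{E}_{pen}=\|u_R-u_D\|_{H^1(\Omega)}$, which Lemma~\ref{penalty convergence} controls by $C(d,\Omega,coe)\beta^{1/2}$. Setting $\beta=C(d,\Omega,coe)\epsilon^2$ therefore gives $\mathcal{E}_{pen}\le\epsilon$. With this $\beta$ now locked in, the strategy of part~(1) is reapplied: the stated parameter class is still of the form required by Theorem~\ref{app error} (with $\beta^2$ absorbed into the constant), and the explicit factor $\beta^{-1}=\mathcal{O}(\epsilon^{-2})$ appearing in both the approximation and the statistical bound inflates the required sample size only by a factor of $\epsilon^{-O(1)}$, which can be absorbed into the $\epsilon^{-Cd\log d}$ scaling by enlarging~$C$.

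The main obstacle is the bookkeeping in the statistical step: one must verify that after substituting $\mathcal{D}=O(\log d)$, $\mathfrak{n}_{\mathcal{D}}=\mathcal{O}(\epsilon^{-d/(1-\mu)})$, and $B_\theta=\mathcal{O}(\epsilon^{-O(d)})$ into the product $\mathfrak{n}_{\mathcal{D}}^{(7\mathcal{D}-3)/2}B_\theta^{(7\mathcal{D}+1)/2}$, the resulting exponent of $\epsilon^{-1}$ truly has the announced form $Cd\log d$ (rather than a higher-order function of $d$), and that the extra $\beta^{-1}$ factor in the Dirichlet case does not spoil this rate. Once these routine but delicate exponent computations are carried out, the combination of parts~(1) and~(2) follows by plugging the estimates of Theorems~\ref{app error} and~\ref{sta error} and Lemma~\ref{penalty convergence} into the error decomposition.
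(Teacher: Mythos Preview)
Your proposal is correct and follows essentially the same route as the paper's own proof: combine Proposition~\ref{error decomposition} with Theorem~\ref{app error} and Theorem~\ref{sta error} for part~(1), and additionally invoke Lemma~\ref{penalty convergence} with $\beta\sim\epsilon^2$ for part~(2). Your write-up in fact spells out more of the exponent bookkeeping than the paper does, which simply sets each error term to $\epsilon/2$ (respectively $\epsilon/3$) and cites the three results.
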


\begin{proof}
Setting the approximation error $\mathcal{E}_{app}$ as $\frac{\epsilon}{2}$ in Theorem $\ref{app error}$ and the statisitical error $\mathcal{E}_{sta}$ as $\frac{\epsilon}{2}$ in Theorem $\ref{sta error}$. Combining Proposition $\ref{error decomposition}$, Theorem $\ref{app error}$ and Theorem $\ref{sta error}$ yields (1).

Setting the approximation error $\mathcal{E}_{app}$ as $\frac{\epsilon}{3}$ in Theorem $\ref{app error}$, the statisitical error $\mathcal{E}_{sta}$ as $\frac{\epsilon}{3}$ in Theorem $\ref{sta error}$ and the penalty error $\mathcal{E}_{pen}$ as $\frac{\epsilon}{3}$ in Lemma $\ref{penalty convergence}$. Combining Proposition $\ref{error decomposition}$, Theorem $\ref{app error}$, Theorem $\ref{sta error}$ and Lemma $\ref{penalty convergence}$ yields (2).
\end{proof}

\section{Conclusions and Extensions}\label{sec:conclusion}

This paper analyzes the convergence rate of the deep Galerkin method (DGMW) for second-order elliptic equations in $\mathbb{R}^d$ with Dirichlet, Neumann, and Robin boundary conditions, respectively.   We provide the first  $\mathcal{O}(n^{-1/d})$ convergence rate of DGMW by properly choosing the depth and width of the two networks in terms of the number of training samples $n$.
We will extend the current analysis to the Friedrichs learning method \cite{chen2020friedrichs} in our future work.

\section*{Acknowledgements}	
	
The work of Y. Jiao is supported in part by the National Science Foundation of China under Grant 11871474 and by the research
fund of KLATASDSMOE. The work of Y. Wang is supported in part by the Hong Kong Research Grant Council grants 16308518 and
16317416 and  HK Innovation Technology Fund ITS/044/18FX, as well as Guangdong-Hong Kong-Macao Joint Laboratory for Data-Driven Fluid Mechanics and Engineering Applications. The work of H. Yang was partially supported by the US National Science Foundation under award DMS-2244988, DMS-2206333, and the Office of Naval Research Award N00014-23-1-2007.

\bibliographystyle{siam}
\bibliography{ref}	

\begin{thebibliography}{10}

\bibitem{adams2003sobolev}
{\sc R.~A. Adams and J.~J. Fournier}, {\em Sobolev spaces}, Elsevier, 2003.

\bibitem{chen2020friedrichs}
{\sc F.~Chen, J.~Huang, C.~Wang, and H.~Yang}, {\em Friedrichs learning: Weak
  solutions of partial differential equations via deep learning}, arXiv
  preprint arXiv:2012.08023,  (2020).

\bibitem{dondl2021uniform}
{\sc P.~Dondl, J.~M{\"u}ller, and M.~Zeinhofer}, {\em Uniform convergence
  guarantees for the deep ritz method for nonlinear problems}, arXiv preprint
  arXiv:2111.05637,  (2021).

\bibitem{duan2021convergence}
{\sc C.~Duan, Y.~Jiao, Y.~Lai, Dingwei~Li, X.~Lu, and Z.~Yang}, {\em
  Convergence rate analysis for deep ritz method}, arXiv preprint
  arXiv:2103.13330,  (2021).

\bibitem{dgm15}
{\sc W.~E, J.~Han, and A.~Jentzen}, {\em Deep learning-based numerical methods
  for high-dimensional parabolic partial differential equations and backward
  stochastic differential equations}, Communications in Mathematics and
  Statistics, 5 (2017), p.~349â€?80.

\bibitem{evans1998partial}
{\sc L.~C. Evans}, {\em Partial differential equations}, Graduate studies in
  mathematics, 19 (1998).

\bibitem{gilbarg2015elliptic}
{\sc D.~Gilbarg and N.~S. Trudinger}, {\em Elliptic partial differential
  equations of second order}, springer, 2015.

\bibitem{deep2016Goodfellow}
{\sc I.~Goodfellow, Y.~Bengio, A.~Courville, and Y.~Bengio}, {\em Deep
  learning}, MIT press, Cambridge, 2016.

\bibitem{guhring2021approximation}
{\sc I.~G{\"u}hring and M.~Raslan}, {\em Approximation rates for neural
  networks with encodable weights in smoothness spaces}, Neural Networks, 134
  (2021), pp.~107--130.

\bibitem{the2012hughes}
{\sc T.~J. Hughes}, {\em The Finite Element Method: Linear Static and Dynamic
  Finite Element Analysis}, Courier Corporation, 2012.

\bibitem{jiao2021convergence}
{\sc Y.~Jiao, Y.~Lai, D.~Li, X.~Lu, F.~Wang, J.~Z. Yang, et~al.}, {\em A rate
  of convergence of physics informed neural networks for the linear second
  order elliptic pdes}, Communications in Computational Physics, 31 (2022),
  pp.~1272--1295.

\bibitem{jiao2021error}
{\sc Y.~Jiao, Y.~Lai, Y.~Lo, Y.~Wang, and Y.~Yang}, {\em Error analysis of deep
  ritz methods for elliptic equations}, arXiv preprint arXiv:2107.14478,
  (2021).

\bibitem{imagenet2017Krizhevsky}
{\sc A.~Krizhevsky, I.~Sutskever, and G.~E. Hinton}, {\em Imagenet
  classification with deep convolutional neural networks}, Communications of
  the ACM, 60 (2017), pp.~84--90.

\bibitem{artificial1998Lagaris}
{\sc I.~E. Lagaris, A.~Likas, and D.~I. Fotiadis}, {\em Artificial neural
  networks for solving ordinary and partial differential equations}, IEEE
  transactions on neural networks, 9 (1998), pp.~987--1000.

\bibitem{lu2021priori}
{\sc J.~Lu, Y.~Lu, and M.~Wang}, {\em A priori generalization analysis of the
  deep ritz method for solving high dimensional elliptic equations}, arXiv
  preprint arXiv:2101.01708,  (2021).

\bibitem{lu2021machine}
{\sc Y.~Lu, H.~Chen, J.~Lu, L.~Ying, and J.~Blanchet}, {\em Machine learning
  for elliptic pdes: Fast rate generalization bound, neural scaling law and
  minimax optimality}, arXiv preprint arXiv:2110.06897,  (2021).

\bibitem{mishra2020estimates}
{\sc S.~Mishra and R.~Molinaro}, {\em Estimates on the generalization error of
  physics informed neural networks (pinns) for approximating pdes}, arXiv
  preprint arXiv:2006.16144,  (2020).

\bibitem{raissi2019physics}
{\sc M.~Raissi, P.~Perdikaris, and G.~E. Karniadakis}, {\em Physics-informed
  neural networks: A deep learning framework for solving forward and inverse
  problems involving nonlinear partial differential equations}, Journal of
  Computational Physics, 378 (2019), pp.~686--707.

\bibitem{shalev2014understanding}
{\sc S.~Shalev-Shwartz and S.~Ben-David}, {\em Understanding machine learning:
  From theory to algorithms}, Cambridge university press, 2014.

\bibitem{shin2020convergence}
{\sc Y.~Shin, J.~Darbon, and G.~E. Karniadakis}, {\em On the convergence and
  generalization of physics informed neural networks}, arXiv preprint
  arXiv:2004.01806,  (2020).

\bibitem{shin2020error}
{\sc Y.~Shin, Z.~Zhang, and G.~E. Karniadakis}, {\em Error estimates of
  residual minimization using neural networks for linear pdes}, arXiv preprint
  arXiv:2010.08019,  (2020).

\bibitem{siegel2020approximation}
{\sc J.~W. Siegel and J.~Xu}, {\em Approximation rates for neural networks with
  general activation functions}, Neural Networks, 128 (2020), pp.~313--321.

\bibitem{dgm2018justin}
{\sc J.~A. Sirignano and K.~Spiliopoulos}, {\em Dgm: A deep learning algorithm
  for solving partial differential equations}, Journal of Computational
  Physics, 375 (2018), pp.~1339--1364.

\bibitem{seq2014Sutskever}
{\sc I.~Sutskever, O.~Vinyals, and Q.~V. Le}, {\em Sequence to sequence
  learning with neural networks}, in Advances in Neural Information Processing
  Systems, Z.~Ghahramani, M.~Welling, C.~Cortes, N.~Lawrence, and K.~Q.
  Weinberger, eds., vol.~27, Curran Associates, Inc., 2014.

\bibitem{numerical2013thomas}
{\sc J.~Thomas}, {\em Numerical Partial Differential Equations: Finite
  Difference Methods}, vol.~22, Springer Science \& Business Media, 2013.

\bibitem{the2017E}
{\sc E.~Weinan and B.~Yu}, {\em The deep ritz method: A deep learning-based
  numerical algorithm for solving variational problems}, Communications in
  Mathematics and Statistics, 6 (2017), pp.~1--12.

\bibitem{xu2020finite}
{\sc J.~Xu}, {\em The finite neuron method and convergence analysis}, arXiv
  preprint arXiv:2010.01458,  (2020).

\bibitem{wan2020zang}
{\sc Y.~Zang, G.~Bao, X.~Ye, and H.~Zhou}, {\em Weak adversarial networks for
  high-dimensional partial differential equations}, Journal of Computational
  Physics, 411 (2020), p.~109409.

\end{thebibliography}
\end{document}